\documentclass[12pt, reqno]{amsart}

\usepackage{amssymb}
\usepackage{graphicx}
\usepackage[cmtip,all]{xy}

%     Update the information and uncomment if AMS is not the copyright
%     holder.
%\copyrightinfo{2009}{American Mathematical Society}

% internal commands
        \newcommand{\K}{\ensuremath{\Bbbk}}
        \newcommand{\N}{\ensuremath{\mathbb{N}}}
        \newcommand{\Q}{\ensuremath{\mathbb{Q}}}
        
        \newcommand{\Z}{\ensuremath{\mathbb{Z}}}

\newcommand{\qnum}[2]{[#1]_{#2}}%added by DJ
\DeclareMathOperator{\ann}{ann}%added by DJ
\DeclareMathOperator{\ch}{char}
\DeclareMathOperator{\GK}{GKdim}
\DeclareMathOperator{\gr}{gr}
\DeclareMathOperator{\spa}{-span}
        
	  \newcommand{\End}{\textup{End}}	
        \newcommand{\Hom}{\textup{Hom}}
        \newcommand{\grHom}{\textup{grHom}}

        \newcommand{\del}{\ensuremath{\partial}}

% environments
        \theoremstyle{plain}
                \newtheorem{theorem}{Theorem}[section]
                \newtheorem{lemma}[theorem]{Lemma}
                \newtheorem{corollary}[theorem]{Corollary}
                \newtheorem{proposition}[theorem]{Proposition}
        \theoremstyle{definition}
                \newtheorem{definition}[theorem]{Definition}
                \newtheorem{example}[theorem]{Example}
                \newtheorem{notation}[theorem]{Notation}
                \newtheorem{remark}[theorem]{Remark}
                
        \numberwithin{equation}{section}
        
        \newcommand{\ignore}[1]{}
        
        \newcommand{\mynote}[1]{}

\begin{document}

% \title[short text for running head]{full title}
% document information
        \title[Noetherian Algebras of quantum differential operators]
        {Noetherian Algebras of quantum differential operators}

%    Only \author and \address are required; other information is
%    optional.  Remove any unused author tags.

%    author one information
% \author[short version for running head]{name for top of paper}

%        \date{\today}
        \author[Iyer]{Uma N. Iyer$^1$}
        \author[Jordan]{David A. Jordan$^2$}
        \email{${}^1$uma.iyer@bcc.cuny.edu, ${}^2$d.a.jordan@sheffield.ac.uk}
        \address{${}^1$Department of Mathematics and Computer Science,
          2155 University Avenue, Bronx, New York 10453,USA.}
        \address{${}^2$School of Mathematics and Statistics,
                               University of Sheffield,
                               Hicks Building,
                               Sheffield,
                               S3 7RH.
                               UK.}

%    \subjclass is required.
\subjclass[2010]{16P40, 13N10, 16S32}
\keywords{Noetherian rings, quantum differential operators}
\date{\today}

%\dedicatory{}

%    Abstract is required.
\begin{abstract}
We consider algebras  of quantum differential operators, for appropriate bicharacters  on a polynomial algebra in one indeterminate and
for the coordinate algebra of quantum $n$-space for $n\geq 3$. In the former case a set of generators for the quantum differential operators was identified in work by the first author and T. C. McCune but it was not known whether the algebra is Noetherian. We answer this question affirmatively,
setting it in a more general context involving the behaviour Noetherian condition under localization at the powers of a single element. In the latter case we determine the algebra of quantum differential operators as a skew group algebra of the group $\Z^n$ over a quantized Weyl algebra. It follows from this description that this algebra is a simple right and left Noetherian domain.
\end{abstract}

\maketitle

%---------------------------------------------------------------------------
%---------------------------------------------------------------------------
%---------------------------------------------------------------------------
%---------------------------------------------------------------------------

%---------------------------------------------------------------------------
%---------------------------------------------------------------------------
\section{Introduction}
%---------------------------------------------------------------------------
%---------------------------------------------------------------------------
This paper brings together two approaches to algebras
of quantum differential operators. On the one hand there is the formal axiomatic approach
of  Lunts and Rosenberg \cite{LR} where a notion of quantum differential operator is defined and
one can then set out to identify the algebra of operators for familiar algebras and analyse their properties.
The definition of these quantum differential operators requires the base $\K$-algebra $R$ to be $\Gamma$-graded for some abelian group $\Gamma$ and depends on the choice of a bicharacter $\beta: \Gamma \times \Gamma \to \K^*$.
The first author and T. C. McCune have contributed to this theory through their identification of the quantum differential operators for appropriate bicharacters, of the polynomial algebra $\K[x]$ in one variable \cite{IM1} and the coordinate algebra of the quantum plane \cite{IM2}. A second approach is to study algebras that are either generated by particular operators of an intuitively quantum differential nature or are generalizations of familiar algebras of differential operators. Examples of the latter in the work of the second author are the quantized Weyl algebras \cite{J1}, arising from the quantum calculus of Maltsiniotis \cite{malt}, and the algebras generated by two or more Eulerian derivatives \cite{J2}. Quantized Weyl algebras are iterated skew polynomial algebras over the field $\K$, making many of their properties transparent.  Algebras constructed in this approach are often subalgebras of the more formal algebras of quantum differential operators of Lunts and Rosenberg and one can then ask how close the relationship is and whether known properties of the subalgebras can enhance the understanding of the full algebras. The most substantial two projects in the paper are instances of this.

The algebra $D$ of quantum differential operators on the polynomial algebra $\K [x]$, for a particular bicharacter depending on a parameter $q$,  was studied in \cite{IM1} where a set of generators was identified together with relations satisfied by those generators and some structural properties. The main question left open was whether $D$ is Noetherian and we here we shall answer this positively, under reasonable conditions on the parameter $q$. The algebra $D_x$ of quantum differential operators on the Laurent polynomial algebra $\K [x^{\pm 1}]$ is more accessible than $D$ and is obtained from $D$ by localizing at the powers of $x$ which, although $x$ is not normal in $D$, form a right and left Ore set. In particular, $D_x$ is readily seen to be right and left Noetherian. This raises an interesting general problem, namely, given a domain $A$ and an element $a\in A$ such that the powers of $a$  form a right and left Ore set and the localization $A_a$ is Noetherian,
to find conditions on $A$ and $a$ which guarantee that $A$ itself is Noetherian. We make a contribution to this problem by showing that under conditions that, although they appear strong, are satisfied by $D$ and $x$. This is heavily dependent on the work of Rogalski \cite{R}. In addition to the Noetherian result, we find a basis for $D$ and a complete set of defining relations. The approach here follows that used in \cite{J2} in the study of algebras generated by Eulerian derivatives.

Our second project concerns the identification of the algebra of quantum differential operators for the coordinate algebra of quantum $n$-space for $n\geq 3$ and a natural bicharacter. We shall see here that, under reasonable conditions, this algebra is a skew group algebra of the group $\Z^n$ over a quantized Weyl algebra. The approach here is first to identify quantum differential operators that generate a subalgebra that is isomorphic to such an algebra and then to show that this subalgebra is the full algebra of quantum differential operators. Many structural properties follow from this description. In particular the algebra of quantum differential operators is a simple right and left Noetherian domain.

Other algebras for which we consider quantum differential operators are quantum tori, the coordinate algebra of the quantum plane
and the quantum exterior algebra. Operators related to quantum differential operators,  such as derivations, skew derivations, and
generalized derivations, on the algebras that we consider have been well studied, see
\cite{A1,A2,A3,AC,OP}
and references therein.

The paper is arranged as follows.
The preliminaries of quantum differential operators are explained in Section \ref{Prelim}.
Section \ref{poly} is concerned with quantum differential operators on polynomial algebras and Laurent polynomial algebras. This section includes a summary of the results of \cite{IM1} and our discussion of the Noetherian problem discussed above.
In Section~\ref{qtorus}, we consider quantum differential operators over the coordinate algebras of quantum $n$-space and quantum tori. We
recall the results on the quantum plane from \cite{IM2} and
then study the case of the  coordinate algebra of quantum $n$-space, concentrating on the case $n\geq 3$ where there are distinct differences to the cases $n=1,2$. The short Section~\ref{ext} deals with the quantum experior algebra for which the algebra of quantum differential
operators is the entire algebra of linear maps on the quantum
exterior algebra.

%---------------------------------------------------------------------------
%---------------------------------------------------------------------------
\subsection*{Acknowledgements}
%---------------------------------------------------------------------------
%---------------------------------------------------------------------------

We thank T. C. McCune for useful discussions.
The first author gratefully acknowledges support from
the PSC-CUNY Grants, award \# 65413-00 43. She also thanks
University of Hyderabad, India, for excellent working conditions where
part of this work was undertaken, and S. Datt (see \ref{datt}) for useful
discussions.

%---------------------------------------------------------------------------
%---------------------------------------------------------------------------
\section{Preliminaries  on Quantum differential
operators (\cite{LR}) } \label{Prelim}
%---------------------------------------------------------------------------
%---------------------------------------------------------------------------
Throughout $\K$ will be a field of characteristic $0$ and $\K^*$ will denote the multiplicative group $\K\setminus \{0 \}$.
Let $\Gamma$ be an abelian group. Fix a bicharacter $\beta : \Gamma \times
\Gamma \longrightarrow \K^*$.
 Let $R$ be a $\Gamma$-graded $\K$-algebra and $M$ a $\Gamma$-graded
 $R$-bimodule. Let $h(R)$, respectively $h(M)$, denote the set of homogeneous elements of $R$,
respectively $M$.

 Let $\mathcal{Z}_{q}(M)$ denote the {\it quantum-center} of $M$ defined as
	the $\K\spa$ of those elements $m \in h(M)$ for which there
        exists $d\in \Gamma$ such that
	$	mr = \beta (d , d_r) rm$
		for all  $r\in h(R)$.
 For each $a\in \Gamma$, define $\sigma_a \in \grHom_{\K}(M,M)$ defined by
	$\sigma_a (m) = \beta (a,d_m)m$ for  $m\in h(M)$, and
        extend $\sigma_a$ linearly from $h(M)$ to $M$.
	For
	$m\in M$ and $r\in R$, let $[m,r]_a = mr -\sigma_a(r) m$.  Using this
        notation,
	\[
	\mathcal{Z}_{q}(M) = \K\spa \{ m\in h(M) \mid
	\exists a\in \Gamma\text{ such that }[m,r]_a =0 \forall r\in R \}.
	\]
 	Let $M_{q,0} = R\mathcal{Z}_q(M)R$.   For $i\geq 1$, $M_{q,i}$
        denotes the $R$-bimodule
	generated by the set
	\[
	 \K\spa\{ m\in h(M) \mid
	\exists a\in \Gamma\text{ such that }[m,r]_a \in M_{q, i-1}
        \forall r\in R \}.
	\]
Note, $M_{q,0}\subset M_{q,1}\subset \cdots $ and $M_{q-\textit{diff}} =
\cup_{i\geq 0}M_{q,i}$.
	When $M= \grHom_{\K}(R,R)$ we get the filtered algebra of quantum
        differential operators
	$D_q(R) =M_{q-\textit{diff}}$ and the $R$-bimodule of quantum
        differential operators of order $\leq i$
	is $D_{q}^i(R) = M_{q,i}$.

For each $r\in h(R)$ let $\lambda_r, \rho_r \in \grHom_{\K}(R,R)$ be
the left and right multiplication maps,
\[
\lambda_r (s) = rs \quad \rho_r (s) = sr \quad \forall s\in R.
\]
By the definition of quantum differential operators, we have:
\begin{itemize}
\item 	 The algebra $D^0_q(R)$ is generated by the set
	\[
	\{ \lambda_r,
  \rho_s, \sigma_a \mid r,s\in R, a\in \Gamma \}\quad \text{where}
	\]
	\[ \lambda_r\rho_s = \rho_s\lambda_r, \hspace{0.05in}
	\sigma_a \lambda_r = \lambda_{\sigma_a(r)} \sigma_a,
	\textit{ and }
	\sigma_a \rho_r = \rho_{\sigma_a(r)} \sigma_a.
	\]
        For any algebra $A$ and a group $G$ acting on $A$, the skew-group
        algebra $A\# G$ is the left free $A$-module with basis $\{ g \mid
        g\in G \}$ along with an associative multiplication defined by
        \[
        (a_1 g_1)( a_2 g_2 ) = a_1 g_1(a_2) g_1g_2.
        \]
       Then $D^0_q(R)$ is a homomorphic image of
       $(R\otimes_{Z(R)} R^o) \# \Gamma$ where $Z(R)$ denotes the
       centre of $R$, $R^o$ denotes the opposite algebra of $R$,
       and the homomorphism is given by
       \[
       (a\otimes b^o)\gamma \mapsto \lambda_a \rho_b\sigma_{\gamma}.
       \]
\item 	For $i\geq 1$, each $D^i_q(R)$ is the $R$-bimodule generated by the
  $\K$-span of the set
	\[
	\{ \textit{homogeneous }\varphi \mid \exists a\in \Gamma
        \textit{ such that } [\varphi ,r ]_a \in D^{i-1}_q(R) \}.
	\]
        Note that
        $[\varphi \sigma_b, r]_{a+b}=[\varphi, \sigma_b(r)]_a \sigma_b$.
	In particular,\[[\varphi \sigma_{-a}, r] = [\varphi, \sigma_a (r)]_a \sigma_{-a}.\]
	Hence, each $D^i_q(R)$ is
        the $D^0_q(R)$-bimodule generated by
	the $\K$-span of the set
	\[
	\{ \textit{homogeneous }\varphi \mid  [\varphi ,r ] \in
        D^{i-1}_q(R) \}.
	\]
\item 	For each $a \in \Gamma$, let $\varphi \in \grHom_{\K}(R,R)$ be a left
  skew $\sigma_a$-derivation. That is, $\varphi (rs) = \varphi (r)s +
  \sigma_a(r)\varphi(s)$ $\forall r,s\in R$.
	Then $[\varphi , r]_a = \lambda_{\varphi (r)}, \forall r\in R$. That
        is, $\varphi \in D^1_q (R)$. We say that $\varphi$ is a right
        skew $\sigma_b$-derivation if $\varphi (rs)=\varphi (r)\sigma_b (s)
        + r \varphi (s)$.
        That is, $[\varphi ,r]=\lambda_{\varphi (r)}\sigma_b$.
        Note that $\varphi$ is a left $\sigma_a$-derivation
        if and only if $\varphi\sigma_{-a}$ is a right
        $\sigma_{-a}$-derivation.

        Suppose $\varphi$ is a left $\sigma_a$-derivation, then
        $[\varphi , \rho_s] = \rho_{\varphi (s)}\sigma_a$.
        Similarly,
        if $\varphi$ is a right $\sigma_a$-derivation, then
        $[\varphi , \lambda_r] = \lambda_{\varphi (r)}\sigma_a$.
\end{itemize}

\begin{notation}\label{qnumber}
For $n\in \N$ and $m\in \K^*$, the $q$-\emph{number} $\qnum{n}{m}$ is defined to be $1+m+m^2+\dots+m^{n-1}$. If $m\neq 1$ then $\qnum{n}{m}=\frac{m^n-1}{m-1}$ and if $m=1$ then $\qnum{n}{m}=n$.
\end{notation}

%------------------------------------------------------------------------------
%------------------------------------------------------------------------------

%------------------------------------------------------------------------------
%------------------------------------------------------------------------------
\section{Quantum Differential Operators on Polynomial Algebras}\label{poly}
%------------------------------------------------------------------------------
%------------------------------------------------------------------------------
%------------------------------------------------------------------------------
%------------------------------------------------------------------------------
\subsection{Polynomial algebra in one variable
(\cite{IM1}) } \label{poly-one-variable-generic}
%------------------------------------------------------------------------------
%------------------------------------------------------------------------------
Let $\K$ be a field and let $q\in \K$ be transcendental  over $\mathbb{Q}$.
Let $R= \K [x]$ be the polynomial algebra in one variable.
$R$ is $\mathbb{Z}$-graded, with $\deg(x)=1$.  Define the
bicharacter $\beta : \mathbb{Z} \times \mathbb{Z} \to \K^*$
by $\beta (n,m) = q^{nm}$.

It is shown in \cite{IM1} that the algebra of quantum differential operators on $R$,
denoted by $D_q(R)$, is a $\K$-algebra generated by the set
\[
\{ \lambda_x = x, \del^{\beta} = \del^{\beta^1},
	\del=\del^{\beta^0}, \del^{\beta^{-1}} \}\quad \text{where}
\]
the operators are given by:
\begin{align*}
x\dot(x^n) &= x^{n+1},\\
\del^{\beta^a} (x^n) &= \qnum{n}{q^a} x^{n-1}
\textit{ for } a\in \{ -1,0, 1 \}.%,\\
%\del (x^n) &= nx^{n-1}.
\end{align*}

We now aim to determine a full set of defining relations for these generators and a basis for $D:=D_q(\K[x])$.
To facilitate the handling of monomials in the generators $x, \del, \del^\beta$ and $\del^{\beta^{-1}}$,
we rewrite $\del^\beta$ and $\del^{\beta^{-1}}$ as $\del_1$ and $\del_{-1}$ respectively. In general formulae involving
$\del_a$ for $a=-1,0,1$, $\del_0$ should be interpreted as $\del$.

The generators $x$, $\del$, $\del_1$ and $\del_{-1}$ satisfy the Weyl algebra or quantized Weyl algebra relations
\begin{equation}
\del_a x - q^a x \del_a=1,\quad a=-1,0,1, \label{gens-rels1}\end{equation}
and the relations, inserting $x$ into commutation relations,:
\begin{equation}
\del_a x \del_b =  \del_b x \del_a, \quad a\neq b\in \{-1,0,1\}.\label{gens-rels2}\end{equation}
There is also $q$-commutation between $\del_1$ and $\del_{-1}$:
\begin{equation}\del_{-1}\del_1 = q \del_1 \del_{-1}.\label{gens-rels3}\end{equation}

We shall show that \eqref{gens-rels1}, \eqref{gens-rels2} and \eqref{gens-rels3} form a full set of defining relations for $D$. In \cite{IM1}, this was claimed for \eqref{gens-rels1}, \eqref{gens-rels2}, \eqref{gens-rels3}  and an extra relation, \eqref{gens-rels4} below, which we can now derive from
the others. However the proof given in \cite{IM1} is invalid.

Let $E$ denote the $\K$-algebra generated by $x, \del, \del_1$ and $\del_{-1}$ subject to
the above relations \eqref{gens-rels1}, \eqref{gens-rels2} and \eqref{gens-rels3}. Thus there is a surjective $\K$-homomorphism $\theta:E\twoheadrightarrow D$.
There will be some abuse of notation in using the same notation for elements of $D$ and $E$. We shall show
that $\theta$ is an isomorphism. In the following lemma  $\qnum{n}{m}$ is defined as in Notation~\ref{qnumber}.
\begin{lemma}
The following relations, where $a=-1,0,1$ and $b=-1,0,1$ hold in both $D$ and $E$:
\begin{eqnarray}
x(q^a\del_a\del_b-q^b\del_b \del_a)&=&\del_a-\del_b,\label{xdd}\\
(q^a\del_a \del_b-q^b\del_b \del_a)x&=&q^a\del_a-q^b\del_b.\label{ddx}\\
\del_a x^n-q^{an}x^n\del_a &=&\qnum{n}{q^a}x^{n-1},\label{dxncomm}\\
\del_a^nx-q^{an} x\del_a^n&=&\qnum{n}{q^a}\del_a^{n-1},\label{dnxcomm}\\
(q-1)x\del_1\del_{-1}&=&\del_{1} - \del_{-1},\label{xdd1-1}\\
q\del_1-\del_{-1}&=&(q-1)\del_1 x \del_{-1},\label{gens-rels4}\\
(q-1)\del_1\del_{-1}x&=&q\del_1-q^{-1}\del_{-1}.\label{ddx1-1}
\end{eqnarray}
\end{lemma}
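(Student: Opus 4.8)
The plan is to verify each of the seven identities by a direct calculation, working in $E$ from the defining relations \eqref{gens-rels1}, \eqref{gens-rels2}, \eqref{gens-rels3}; since $\theta:E\onto D$ is a surjection, validity in $E$ immediately yields validity in $D$, so there is nothing separate to check in $D$. I would organise the computations so that the later identities are deduced from the earlier ones rather than from scratch.

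First I would establish \eqref{xdd} and \eqref{ddx}. For \eqref{xdd}, multiply \eqref{gens-rels1} for index $a$ on the left by suitable scalars and subtract the corresponding relation for index $b$: from $\del_a x = q^a x\del_a + 1$ and $\del_b x = q^b x\del_b + 1$ one gets $\del_a x - \del_b x = q^a x\del_a - q^b x\del_b$, i.e. $(\del_a-\del_b)x = x(q^a\del_a - q^b\del_b)$; to bring this to the stated form I use \eqref{gens-rels2} to commute $x$ past the products, and when $a$ or $b$ equals $0$ the relation $\del_a x\del_b = \del_b x\del_a$ still holds by \eqref{gens-rels2}. The point is that $q^a\del_a\del_b - q^b\del_b\del_a$ is exactly what appears after moving $x$ through using $\del_a x = q^a x\del_a+1$, so \eqref{xdd} and its right-handed mirror \eqref{ddx} come out after a short manipulation. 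Next, \eqref{dxncomm} and \eqref{dnxcomm} are the standard quantized-Weyl-algebra commutation formulae: I would prove both by induction on $n$, the base case $n=1$ being \eqref{gens-rels1}, and the inductive step using $\qnum{n+1}{q^a} = 1 + q^a\qnum{n}{q^a}$ (equivalently $\qnum{n+1}{q^a} = q^{an} + \qnum{n}{q^a}$) together with one application of \eqref{gens-rels1}. These two are routine and I would present only the inductive identity for the $q$-numbers and one line of the step.

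Then I would derive the three identities involving only $\del_1$ and $\del_{-1}$. For \eqref{xdd1-1}, specialise \eqref{xdd} to $a=1$, $b=-1$: the left side becomes $x(q\del_1\del_{-1} - q^{-1}\del_{-1}\del_1)$, and using \eqref{gens-rels3} in the form $\del_{-1}\del_1 = q\del_1\del_{-1}$ the bracket collapses to $(q - q^{-1}\cdot q)\del_1\del_{-1} = (q-1)\del_1\del_{-1}$, giving $(q-1)x\del_1\del_{-1} = \del_1 - \del_{-1}$. For \eqref{gens-rels4} I would use \eqref{gens-rels2} with $a=1$, $b=-1$, namely $\del_1 x\del_{-1} = \del_{-1}x\del_1$, rewrite $\del_1 x = qx\del_1 + 1$ and $\del_{-1}x = q^{-1}x\del_{-1}+1$ on the two sides, and compare: $q x\del_1\del_{-1} + \del_{-1} = q^{-1}x\del_{-1}\del_1 + \del_1$; applying \eqref{gens-rels3} to the right-hand $x\del_{-1}\del_1 = q x\del_1\del_{-1}$ makes the two $x\del_1\del_{-1}$ terms cancel, leaving $\del_{-1} - \del_1 = \del_1 - q\cdot(\text{something})$ — more carefully, it yields $(q-1)x\del_1\del_{-1}$ on one side, which by \eqref{xdd1-1} is $\del_1-\del_{-1}$, but I also want the form with $\del_1 x\del_{-1}$; alternatively, multiply \eqref{xdd1-1} through and substitute $\del_1 x = qx\del_1+1$ to convert $x\del_1\del_{-1}$ into $\del_1 x\del_{-1}$ minus $\del_{-1}$, which gives precisely $q\del_1 - \del_{-1} = (q-1)\del_1 x\del_{-1}$. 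Finally \eqref{ddx1-1} is the right-handed analogue: specialise \eqref{ddx} to $a=1$, $b=-1$, where the bracket is again $(q-1)\del_1\del_{-1}$ by \eqref{gens-rels3}, and the right side is $q\del_1 - q^{-1}\del_{-1}$ as stated.

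I do not expect a genuine obstacle here: every step is forced by the relations, and the only mild care needed is in tracking the convention that $\del_0$ means $\del$ so that \eqref{gens-rels2} is being applied legitimately when one of the indices is $0$, and in keeping the $q$-number recursions straight in the two inductions. The one spot to be slightly careful is making sure the chain \eqref{xdd1-1} $\Rightarrow$ \eqref{gens-rels4} $\Rightarrow$ \eqref{ddx1-1} (or whichever order is cleanest) uses only relations already in hand and does not implicitly assume the very relation \eqref{gens-rels4} that \cite{IM1} had taken as an extra axiom; the whole point of the lemma is that \eqref{gens-rels4} is a consequence of \eqref{gens-rels1}--\eqref{gens-rels3}, so I would make sure the derivation of \eqref{gens-rels4} invokes only \eqref{gens-rels2} and \eqref{gens-rels3} (plus \eqref{gens-rels1}), as sketched above.
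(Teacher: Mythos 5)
Your proposal is correct and follows essentially the same route as the paper: \eqref{xdd} and \eqref{ddx} from \eqref{gens-rels2} via \eqref{gens-rels1}, the two commutation formulae by induction on $n$, then \eqref{xdd1-1} and \eqref{ddx1-1} by specialising to $a=1$, $b=-1$ and collapsing the bracket with \eqref{gens-rels3}, and \eqref{gens-rels4} from \eqref{xdd1-1} together with $\del_1x=qx\del_1+1$. The only cosmetic issue is the opening detour through the linear identity $(\del_a-\del_b)x=x(q^a\del_a-q^b\del_b)$, which is not needed; the substitution of \eqref{gens-rels1} directly into \eqref{gens-rels2}, which you then describe, is the whole argument.
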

\begin{proof}
Both \eqref{xdd} and \eqref{ddx} are obtained from \eqref{gens-rels2}
using \eqref{gens-rels1}. The relations \eqref{dxncomm} and \eqref{dnxcomm} can be deduced inductively from \eqref{gens-rels1}. Using \eqref{gens-rels3}, the relation \eqref{xdd}, in the case $a=1$, $b=-1$, simplifies to
\eqref{xdd1-1}. By \eqref{gens-rels1} with $a=1$ and \eqref{xdd1-1} we have \eqref{gens-rels4}.
Using \eqref{gens-rels3}, the relation \eqref{ddx} in the case $a=1$, $b=-1$ simplifies to \eqref{ddx1-1}.
\end{proof}

% by \eqref{gens-rels1} with $a=1$, \eqref{ddxbeta}. also gives \eqref{gens-rels4}.

\begin{remark}
In the initial list of relations, \eqref{gens-rels3}  can be replaced by \eqref{gens-rels4}. To see this, assume that
\eqref{gens-rels1}, \eqref{gens-rels2} and \eqref{gens-rels4} hold and
use \eqref{gens-rels4}, \eqref{gens-rels2} with $a=-1$ and $b=1$, and \eqref{gens-rels1}, with $a=-1$, to write
\begin{equation}\label{q-1}
\del_{-1}=(1+(q^{-1}-1)x\del_{-1})\del_1.
\end{equation} Then pre-multiply by $\del_1$ and apply \eqref{gens-rels2}, with $a=1$ and $b=-1$, to obtain
\begin{equation}
\del_1\del_{-1}=(\del_1+(q^{-1}-1)\del_1 x\del_{-1})\del_1=(1+(q^{-1}-1)\del_{-1}x)\del_1^2.
\end{equation}
Next apply \eqref{gens-rels1}, with $a=-1$, and \eqref{q-1} to deduce that $\del_1\del_{-1}=q^{-1}\del_{-1}\del_1$.
Thus, in our claimed full set of defining relations, \eqref{gens-rels3} may be replaced by \eqref{gens-rels4}.
\end{remark}

In $D$ and in $E$, let $\sigma_a:=1+(q^a-1)x\del_a$ for $a=1,-1$.  By \eqref{gens-rels1}, there are alternative expressions
$\sigma_a=q^{-a}(1+(q^a-1)\del_a x)$.
In $D$, $\sigma_1$ and $\sigma_{-1}$ act as inverse $\K$-automorphisms of $\K[x]$.
\begin{lemma}
\begin{enumerate}
\item $\sigma_1$ and $\sigma_{-1}$ are inverses
of each other in $E$ as well as in $D$.
\item The following relations are satisfied:
 \begin{eqnarray}\label{dbinv}
\del_{-1}&=&\sigma_{-1}\del_1,\\
\del_a \sigma_b&=&q^b\sigma_b \del_a\quad a=-1,0,1,\; b=1,-1.\label{ds}
\end{eqnarray}
\end{enumerate}
\end{lemma}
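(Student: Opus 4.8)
The plan is to verify all four assertions by direct computation inside $E$, drawing only on the defining relations \eqref{gens-rels1}--\eqref{gens-rels3} and on the consequences already recorded in the first lemma, notably \eqref{gens-rels4} and \eqref{xdd1-1}, both of which are valid in $E$. For the $D$-versions the action on $\K[x]$ settles everything immediately: from $\sigma_a(x^n)=x^n+(q^a-1)\qnum{n}{q^a}x^n=q^{an}x^n$ one sees that $\sigma_1$ and $\sigma_{-1}$ act as mutually inverse $\K$-automorphisms, and \eqref{dbinv}, \eqref{ds} are checked on each $x^n$. What is not automatic is the transfer of these identities to $E$, since $\theta$ is not yet known to be injective; hence the $E$-statements must be derived relationally.

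For part (1) in $E$, I would expand
\[
\sigma_1\sigma_{-1}=1+(q-1)x\del_1+(q^{-1}-1)x\del_{-1}+(q-1)(q^{-1}-1)\,x\del_1x\del_{-1},
\]
rewrite $(q-1)\del_1x\del_{-1}=q\del_1-\del_{-1}$ using \eqref{gens-rels4}, and observe that the last term then equals $-(q-1)x\del_1-(q^{-1}-1)x\del_{-1}$, which cancels the two linear terms and leaves $\sigma_1\sigma_{-1}=1$. Since a one-sided inverse need not be two-sided in a ring not yet known to be well behaved, $\sigma_{-1}\sigma_1=1$ requires a separate (symmetric) computation: one first uses \eqref{gens-rels2} with $a=-1$, $b=1$ to replace $\del_{-1}x\del_1$ by $\del_1x\del_{-1}$, and then the argument proceeds exactly as before.

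For part (2), relation \eqref{dbinv} is, modulo part (1), the same as $\sigma_1\del_{-1}=\del_1$; expanding $\sigma_1\del_{-1}=\del_{-1}+(q-1)x\del_1\del_{-1}$ and applying \eqref{xdd1-1} gives this at once (it is also precisely relation \eqref{q-1} of the preceding remark). For \eqref{ds}, since $q$ is transcendental we have $q^b\neq 1$, so, writing $\sigma_b=1+(q^b-1)x\del_b$ and clearing the scalar $q^b-1$, the identity $\del_a\sigma_b=q^b\sigma_b\del_a$ is equivalent to $\del_ax\del_b-q^bx\del_b\del_a=\del_a$. When $a=b$ this is $(\del_bx-q^bx\del_b)\del_b=\del_b$, immediate from \eqref{gens-rels1}; when $a\neq b$, \eqref{gens-rels2} turns the left side into $(\del_bx-q^bx\del_b)\del_a$, which is again $\del_a$ by \eqref{gens-rels1}. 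This disposes of all $a\in\{-1,0,1\}$ and $b\in\{1,-1\}$ uniformly.

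I do not expect a genuine obstacle here; the only thing to be careful about is to argue exclusively within $E$, not invoking the faithful action on $\K[x]$, which is not yet available. The least automatic point is the ``second-side'' identity $\sigma_{-1}\sigma_1=1$, which does not follow formally from $\sigma_1\sigma_{-1}=1$ and must be computed on its own, though that computation is routine. It is perhaps worth noting for later use that the auxiliary identity $\del_ax\del_b=q^bx\del_b\del_a+\del_a$ in fact holds for all $a,b\in\{-1,0,1\}$.
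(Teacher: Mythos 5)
Your proposal is correct and follows essentially the same route as the paper: a purely relational verification inside $E$, deriving $\sigma_1\sigma_{-1}=\sigma_{-1}\sigma_1=1$ from \eqref{gens-rels1}, \eqref{gens-rels2}/\eqref{gens-rels3} and \eqref{xdd1-1} (equivalently \eqref{gens-rels4}), reading \eqref{dbinv} off from \eqref{xdd1-1}, and checking \eqref{ds} from \eqref{gens-rels1} and \eqref{gens-rels2}. Your explicit attention to the fact that $\sigma_{-1}\sigma_1=1$ needs its own computation is a sound elaboration of what the paper leaves implicit.
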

\begin{proof}
(1)
The relations $\sigma_1\sigma_{-1}=1=\sigma_{-1}\sigma_1$ can be derived from
\eqref{gens-rels1}, \eqref{gens-rels3} and \eqref{xdd1-1}.

(2) \eqref{dbinv} is simply \eqref{xdd1-1} rewritten in terms of $\sigma_{-1}$ and \eqref{ds} is easily checked using \eqref{gens-rels2}.
\end{proof}

\begin{lemma} For $a=1,-1$, the following homogeneous cubic relations hold in $D$ and in $E$.
\begin{equation}
\label{cubicq}
q^{2a}\del_a^2\del+\del^2\del_a-(q^a+1)\del\del_a\del+\del\del_a^2+q^a\del_a\del^2-2q^a\del_a\del\del_a=0.
\end{equation}
\end{lemma}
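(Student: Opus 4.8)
The plan is to establish \eqref{cubicq} in $E$; since $\theta:E\twoheadrightarrow D$ is a surjective $\K$-algebra homomorphism, applying $\theta$ then yields the identity in $D$ as well. (One could instead verify \eqref{cubicq} directly in $D$ by evaluating both sides on the basis $\{x^n\}$ of $\K[x]$, but that would only prove it in $D$, whereas the statement in $E$ is the one needed.)

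Fix $a\in\{1,-1\}$ and set $u:=q^a\del_a\del-\del\del_a$, the element occurring in \eqref{xdd} with $b=0$, so that $xu=\del_a-\del$. The key step is to group the six summands on the left-hand side of \eqref{cubicq} as
\[
q^a\del_a(q^a\del_a\del+\del^2-2\del\del_a)+\del(\del_a^2-(q^a+1)\del_a\del+\del\del_a),
\]
pulling $q^a\del_a$ out of the three terms that begin with $\del_a$ and $\del$ out of the three that begin with $\del$. I would then simplify the two inner factors using only \eqref{gens-rels1} and the identity $xu=\del_a-\del$: for the first, $q^a\del_a\del+\del^2-2\del\del_a=u-\del(\del_a-\del)=u-\del xu=(1-\del x)u=-x\del u$, where $\del x=1+x\del$ is \eqref{gens-rels1} with $a=0$; for the second, $\del_a^2-(q^a+1)\del_a\del+\del\del_a=\del_a(\del_a-\del)-u=\del_a xu-u=(\del_a x-1)u=q^a x\del_a u$, using $\del_a x=1+q^a x\del_a$.

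Substituting these back, the left-hand side of \eqref{cubicq} collapses to $-q^a\del_a x\del u+q^a\del x\del_a u=q^a(\del x\del_a-\del_a x\del)u$, which is $0$ by \eqref{gens-rels2} with $b=0$. The only point demanding attention is the bookkeeping of the left/right placement of $x$, $\del$, $\del_a$ in the noncommutative algebra $E$; once the grouping above is written down, every reduction is a direct application of the ``insert $x$'' relations \eqref{gens-rels1} and \eqref{gens-rels2}, and in particular \eqref{gens-rels3} plays no role.
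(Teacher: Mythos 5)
Your proof is correct, and it is essentially the paper's argument in different packaging: the paper obtains \eqref{cubicq} by equating the two simplifications of $(\del\del_a-q^a\del_a\del)\,x\,(\del\del_a-q^a\del_a\del)$ coming from \eqref{xdd} and \eqref{ddx}, which uses exactly the same inputs (\eqref{gens-rels1} and \eqref{gens-rels2}, with \eqref{gens-rels3} playing no role) as your factorization of the left-hand side into $q^a(\del x\del_a-\del_a x\del)u$. Your observation that the computation is carried out in $E$ and transported to $D$ via $\theta$ matches the paper's framework, since \eqref{xdd} and \eqref{ddx} are established there in both algebras.
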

\begin{proof}
These relations, which are special cases of relations established for $D$ in \cite{J2} follow from previously listed relations on comparing the results of simplifying
the expression $(\del \del_a-q^a\del_a \del)x(\del \del_a-q^a\del_a \del)$ using \eqref{xdd} or
using \eqref{ddx}.
\end{proof}

\begin{notation}\label{Dxetal}
Here we establish notation for some algebras related to $D$.

By $G$, we denote the subalgebra of $D$ generated by $\del, \del_1$ and $\del_{-1}$.
As the operators $\del, \del_1$ and $\del_{-1}$ on $\K[x]$ each reduce degree in $x$ by one,
$G$ is $\N$-graded with $\del, \del_1$ and $\del_{-1}$ having degree $1$.
As $x$ increases degree in $x$ by one, $D$ itself is $\Z$-graded. The actions of
$\del, \del_1$ and $\del_{-1}$ on $\K[x]$ extend to actions on $\K[x,x^{-1}]$.

By $S$, we denote the subalgebra of $\End_\K(\K[x,x^{-1}])$ generated by $\del$, $\del_1$,  $\del_{-1}$ and $x^{-1}$.
Thus $S$ is also $\N$-graded and $G\hookrightarrow S$.
By $T$, we denote the subalgebra of $S$ generated by $\del_1, \del_{-1}$ and $x^{-1}$.

Let $D_x$ denote the subalgebra of $\End_\K(\K[x,x^{-1}])$ generated by $\del$, $\del_1$, $\del_{-1}, x$ and $x^{-1}$.
As $\sigma_a=1+(q^a-1)x\del_a$, $a=1,-1$, $D_x$ is generated by
$x, x^{-1}, \del, \sigma_1$ and $\sigma_{-1}$. Note that, by \eqref{ds}, $\sigma_1 \del=q^{-1}\del\sigma_1$ and that
\begin{equation}
\sigma_1 x=qx\sigma_1.\label{sx}\end{equation}
It follows that $D_x$ is a homomorphic image, with $X\mapsto \sigma_1$,
of the skew Laurent polynomial ring $A_x[X^{\pm 1};\alpha]$, where $A$ is the first Weyl algebra, generated by $\del$ and $x$,
$A_x$ is its localization at the powers of $x$ and $\alpha$ is the $\K$-automorphism of $A_x$ such that $\alpha x=qx$ and $\alpha(\del)=q^{-1}\del$.
As $A_x$ is simple with group of units $\{\lambda x^i:i\in \Z, \lambda\in \K^*\}$, $\alpha$ is not inner and, by \cite[Theorem 1.8.5]{McCR},
$A_x[X^{\pm 1};\alpha]$ is simple, whence $D_x\simeq A_x[X^{\pm 1};\alpha].$
It follows from this that $D_x$ and its subalgebras $D, G, S$ and $T$, are domains and that $D_x$ is simple. Also, by \cite[Theorems 1.12, 1.17]{GW}, $D_x$ is right and left Noetherian.

Note that $D_x$ is the localization of $D$ at the powers of $x$ which,
as a consequence of \eqref{gens-rels1}, form an Ore set in $D$. Also $D_x$ is the localization of $S$ at the powers of $x^{-1}$.

For $n\geq 0$ and an $\N$-graded algebra $H$, let $H_n$ denote the $n$-degree component of $H$.
\end{notation}

\begin{definition}
Following \cite{J2}, a \emph{special} monomial in $\del_1$ and $\del$ is a monomial of the form
$\del_1^j$ or $\del_1^j\del\del_1^k\del^\ell$ where $j, k, \ell$ are non-negative integers. Thus a special monomial
is either a standard monomial $\del_1^d\del^e$, $d, e\geq 0$, or can be obtained from such a monomial by moving one occurrence of $\del$
to the left of a power of $\del_1$. It is shown in \cite{J2} that the special monomials in $\del_1$ and $\del$
form a basis for the subalgebra of $D$ generated by $\del$ and $\del_1$. The number of special monomials
of total degree $n$ in $\del_1$ and $\del$ is $\frac{1}{2}(n^2+n+2)$, and $n+1$ of these are standard.

Let $\mathcal{G}$ denote the set $\{\del_1^j\del_{-1}^k\del^\ell: j,k,\ell\geq 0\}\cup
\{\del_1^j\del\del_1^k\del^\ell:j,\ell\geq 0, k>0\}$. Thus $\mathcal{G}$ consists of the
standard monomials $\del_1^j\del_{-1}^k\del^\ell$ in $\del_1, \del_{-1}$ and $\del$ and
the non-standard special monomials in $\del_1$ and $\del$.

Let $\mathcal{D}$ denote the set
\[\mathcal{G}\cup\{x^j\del_1^k\del^\ell:j>0,k,\ell\geq 0\}\cup\{x^j\del_{-1}^k\del^\ell:j>0,k,\ell\geq 0\}\]
and let $\mathcal{E}$ be the corresponding set of monomials in $E$.
Thus in addition to the monomials in $\mathcal{G}$, $\mathcal{D}$
contains those  standard monomials in $x, \del_1$ and $\del$ or in $x, \del_{-1}$ and $\del$ having a positive power of $x$.
\end{definition}

\begin{lemma}
The following relations hold in both $D$ and $E$:
\begin{eqnarray}
\del_{-1}\del\del_1&=&q^2\del_1\del\del_{-1},\label{ddd}\\
\del_{-1}\del^n\del_1&=&q^{n+1}\del_1\del^n\del_{-1}\text{ for }n\geq0,\label{ddnd}\\
\del \del_{-1}&=&
\del \del_1-q\del_1\del +
(q-1)\del_1\del_{-1}+
q^{-1}\del_{-1} \del,\label{5termquad}\\
\del\del_1\del_{-1}&=&q^{-1}\del_1\del_{-1}\del+q^{-1}\del\del_1^2-q\del_1^2\del-2(1-q)\del_1^2\del_{-1}.
 \label{ddqdq-1}
\end{eqnarray}
\end{lemma}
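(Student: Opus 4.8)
The plan is to derive each relation purely from the defining relations \eqref{gens-rels1}, \eqref{gens-rels2}, \eqref{gens-rels3} together with the already-established consequences \eqref{xdd}–\eqref{ddx1-1}, \eqref{dbinv}–\eqref{ds} and \eqref{cubicq}; since all of these hold in both $D$ and $E$, the same computation settles both algebras simultaneously, so I never need to argue in $D$ separately. For \eqref{ddd}, I would start from \eqref{ddnd} with $n=1$ (or prove the two together), using \eqref{gens-rels2} with $a=1$, $b=-1$ to move the central $\del$ past one of the outer factors. Concretely, $\del_{-1}\del\del_1 = \del_{-1}\del\del_1$; applying \eqref{gens-rels2} is not directly possible since there is no $x$ present, so instead I would substitute $\del_{-1}=\sigma_{-1}\del_1$ from \eqref{dbinv} and push $\sigma_{-1}$ to the right using \eqref{ds}: $\del_{-1}\del^n\del_1 = \sigma_{-1}\del_1\del^n\del_1$, and then $\del_1\del^n\del_1 = \del_1\del^n\del_1$ is already symmetric in a useful way once $\sigma_{-1}$ is commuted through. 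Each $\del$ and each $\del_1$ contributes a factor $q$ when $\sigma_{-1}$ passes it (by \eqref{ds}, $\del_a\sigma_{-1}=q^{-1}\sigma_{-1}\del_a$, i.e. $\sigma_{-1}\del_a = q\,\del_a\sigma_{-1}$), so moving $\sigma_{-1}$ past $\del_1\del^n$ produces exactly $q^{n+1}$, and then reconverting $\sigma_{-1}\del_1=\del_{-1}$ at the far right yields $\del_{-1}\del^n\del_1=q^{n+1}\del_1\del^n\del_{-1}$, which is \eqref{ddnd}, with \eqref{ddd} the case $n=1$.

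For \eqref{5termquad}, the cleanest route is again via $\del_{-1}=\sigma_{-1}\del_1$ and the definition $\sigma_{-1}=1+(q^{-1}-1)x\del_{-1}$, so that $\del\del_{-1}=\del\sigma_{-1}\del_1 = \del(1+(q^{-1}-1)x\del_{-1})\del_1$. Expanding, the key term is $\del x\del_{-1}\del_1$; I would use \eqref{gens-rels1} with $a=0$ to write $\del x = x\del+1$, then use \eqref{gens-rels3} to turn $\del_{-1}\del_1$ into $q\del_1\del_{-1}$, and then reassemble $x\del_1$-type expressions back into $\del$'s and $\del_1$'s using \eqref{gens-rels1} again. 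This is a bounded symbolic manipulation: every intermediate monomial has degree at most $2$ in the derivations (plus at most one $x$), and the relations \eqref{gens-rels1}–\eqref{gens-rels3} are exactly what is needed to collapse such monomials. Finally, \eqref{ddqdq-1} is the degree-three relation; I would obtain it by left-multiplying \eqref{5termquad} by $\del_1$, then using \eqref{gens-rels3} to rewrite $\del_1\del_{-1}$ factors and \eqref{cubicq} (with $a=1$) to eliminate the term $\del_1^2\del$ or $\del\del_1^2$ that cannot otherwise be brought to standard form — in other words, \eqref{cubicq} is precisely the quadratic-in-$\del_1$, linear-in-$\del$ syzygy that lets one reduce $\del\del_1\del_{-1}$ to the stated combination of $\del_1\del_{-1}\del$, $\del\del_1^2$, $\del_1^2\del$ and $\del_1^2\del_{-1}$.

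The main obstacle is purely bookkeeping: keeping the powers of $q$ straight when commuting $\sigma_{\pm1}$ (or equivalently when repeatedly applying \eqref{gens-rels1} to trade $x\del_a$ for $\del_a x$) and making sure that at the end of \eqref{5termquad} and \eqref{ddqdq-1} all $x$'s have cancelled, so that the identities really do live in the subalgebra generated by the derivations. There is no conceptual difficulty — no induction beyond the trivial one in $n$ for \eqref{ddnd}, and no appeal to the module action on $\K[x]$ — but the \eqref{ddqdq-1} computation in particular requires care in choosing the order of reductions so that \eqref{cubicq} is applicable at the right moment; applying it too early or too late leaves a monomial outside the span of $\mathcal{G}$.
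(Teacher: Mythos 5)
Your treatment of \eqref{ddd} and \eqref{ddnd} is exactly the paper's: substitute $\del_{-1}=\sigma_{-1}\del_1$ from \eqref{dbinv}, push $\sigma_{-1}$ through the $n+1$ derivation factors via \eqref{ds} to collect $q^{n+1}$, and reabsorb $\sigma_{-1}\del_1$ as $\del_{-1}$ at the right-hand end. That part is fine. For \eqref{ddqdq-1} your plan is workable, but the role you assign to \eqref{cubicq} is off: $\del\del_1^2$ and $\del_1^2\del$ are themselves monomials of $\mathcal{G}$ and appear in the final formula, so there is nothing there for \eqref{cubicq} to eliminate. In fact \eqref{ddqdq-1} follows from \eqref{5termquad}, \eqref{ddd} and \eqref{gens-rels3} alone: write $\del\del_1\del_{-1}=q^{-1}\del\del_{-1}\del_1$, expand the factor $\del\del_{-1}$ by \eqref{5termquad}, convert $\del_{-1}\del\del_1$ back with \eqref{ddd} and $\del_1\del_{-1}\del_1$ with \eqref{gens-rels3}, and then substitute once more for $\del_1\del\del_{-1}$ using $\del_1$ times \eqref{5termquad}. (The paper's own derivation does route through \eqref{cubicq} with $a=-1$, so invoking \eqref{cubicq} is not wrong, but your stated reason for needing it is.)

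The genuine gap is in \eqref{5termquad}. The route you describe --- expand $\del\del_{-1}=\del\bigl(1+(q^{-1}-1)x\del_{-1}\bigr)\del_1$, apply $\del x=x\del+1$, then \eqref{gens-rels3}, then ``reassemble'' --- risks being circular: if you reduce the resulting term $x\del\del_1\del_{-1}$ in the natural way (using \eqref{xdd}, \eqref{ddd} and \eqref{xdd1-1} to move the $x$ onto a $q$-commutator), the computation collapses to the tautology $\del\del_{-1}=\del\del_{-1}$, because the only genuine input you have used is \eqref{xdd1-1} in disguise. Note also that \eqref{gens-rels1} by itself cannot ``reassemble $x\del_1$-type expressions back into $\del$'s'': it only trades $\del_a x$ for $x\del_a$, and a relation in which $x$ multiplies a $q$-commutator (\eqref{xdd}, \eqref{ddx}, \eqref{xdd1-1} or \eqref{ddx1-1}) is needed to make an $x$ disappear. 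The paper avoids the circularity by playing the left identity \eqref{xdd1-1} off against the right identity \eqref{ddx1-1}: premultiply the first by $\del$ and postmultiply the second by $\del$, normalise both with \eqref{gens-rels2} and \eqref{gens-rels3} to get $(1-q^{-1})\del_{-1}x\del\del_1=\del\del_1-\del\del_{-1}$ and $(1-q^{-1})\del_{-1}\del x\del_1=q\del_1\del-q^{-1}\del_{-1}\del$, and subtract; the difference contains $\del_{-1}(\del x-x\del)\del_1=\del_{-1}\del_1$, so the $x$'s vanish and \eqref{5termquad} drops out. Your route can be repaired (for instance by writing $x\del_{-1}=(q^{-1}-1)^{-1}(\sigma_{-1}-1)$ and commuting $\sigma_{-1}$ past $\del_1$ with \eqref{ds} and \eqref{dbinv}), but as written the step that breaks the circularity is missing.
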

\begin{proof}
Using \eqref{dbinv}, we see that $\del_{-1}\del\del_1=\sigma_{-1}\del_1\del\del_1$. Applying \eqref{ds} and \eqref{dbinv}, we obtain \eqref{ddd}.
The same method gives the generalization \eqref{ddnd}.

Premultiplying \eqref{xdd1-1} by $\del$ and applying \eqref{gens-rels2} and \eqref{gens-rels3},
\begin{equation}\label{delxdd1-1}
(1-q^{-1})\del_{-1}x\del\del_1=\del\del_1 -\del \del_{-1}.
\end{equation}
Postmultiplying \eqref{ddx1-1} by $\del$ and applying \eqref{gens-rels2} and \eqref{gens-rels3},
\begin{equation}\label{delddx1-1}
(1-q^{-1})\del_{-1}\del x\del_1=q\del_1\del-q^{-1}\del_{-1}\del.
\end{equation}
Subtracting \eqref{delxdd1-1} from \eqref{delddx1-1} and applying \eqref{gens-rels2}, with $a=0$, and \eqref{gens-rels3} yields
\eqref{5termquad}.

In $D$, the identity \eqref{ddqdq-1} can be confirmed by calculating the action of each side on $x^n$, for $n\geq 3$, but, for $E$, we need to
    show that it follows from previously derived relations. We shall not give full details of this calculation but shall describe the steps involved.
    Observe that, of the six monomials appearing on the left hand side of \eqref{cubicq}, with $a=-1$, four contain $\del_{-1}\del$ as a submonomial.
    Using \eqref{5termquad} to substitute for $\del_{-1}\del$ in each of these, the left hand side of \eqref{cubicq}, with $a=-1$, becomes a linear combination of
    monomials from $\mathcal{G}$ together with $\del\del_{-1}\del$, $\del\del_1\del_{-1}$, $\del^2\del_1$, $\del_{-1}\del\del_{-1}$, $\del_1\del\del_{-1}$, $\del_{-1}\del_1\del_{-1}$, $\del_{-1}\del_1\del$, and $\del_{-1}\del\del_1$. The term $\del^2\del_1$ is expressed, by \eqref{cubicq}, as a linear combination of
    special monomials in $\del_1$ and $\del$ and \ref{gens-rels3} converts $\del_{-1}\del_1\del$ and $\del_{-1}\del_1\del$ to scalar multiples of standard monomials. The identity \eqref{ddd} is used to combine the terms in  $\del_{-1}\del\del_1$ and $\del_1\del\del_{-1}$ in a single term in $\del_1\del\del_{-1}$.
        As before, \eqref{5termquad} is used to substitute for $\del_{-1}\del$ in $\del\del_{-1}\del$,  $\del_{-1}\del\del_{-1}$ and $\del_1\del\del_{-1}$. The upshot of these changes is to convert
    the left hand side of \eqref{cubicq}, with $a=-1$, to a linear combination of monomials from $\mathcal{G}$ together with
    $\del\del_1\del_{-1}$. This rearranges to give \eqref{ddqdq-1}.
\end{proof}

\begin{remark} The following quadratic relation is observed in \cite{J2}:
\begin{equation}\label{6termquad}
\del \del_1-q\del_1\del +
q\del_1 \del_{-1}-q^{-1}\del_{-1}\del_1+
q^{-1}\del_{-1} \del-\del \del_{-1}=0.
\end{equation}
Using \eqref{gens-rels3}, this is equivalent to \eqref{5termquad}.
\end{remark}

\begin{lemma}\label{spanner}
\begin{enumerate}
\item $G$ is spanned by $\mathcal{G}$.
\item
$D$ is spanned by $\mathcal{D}$.
\item
$E$ is spanned by $\mathcal{E}$.
\end{enumerate}
\end{lemma}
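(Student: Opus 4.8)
The plan is to prove the three spanning statements by a standard ``straightening'' or rewriting argument, using the relations collected in the preceding lemmas to reduce an arbitrary monomial in the generators to a $\K$-linear combination of the designated monomials. I would treat the three parts together, since $G$, $D$ and $E$ share the same rewriting rules (the relations that appear in the lemmas hold in all of $D$, $E$, hence also in the subalgebra $G$), and it is only the available generators that differ. Since each of $G$, $D$, $E$ is generated as a $\K$-algebra by the relevant finite generating set, it suffices to show that the $\K$-span of the proposed basis set is closed under left multiplication by each generator; equivalently, that every product of a generator with a proposed monomial rewrites into the span.

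For part (1), I would work in the $\N$-graded algebra $G$ generated by $\del, \del_1, \del_{-1}$ and show that every word in these three letters rewrites into $\CG$. The idea is a double induction: an outer induction on total degree $n$, and within each degree an induction with respect to a suitable ordering of monomials. The moves available are: the $q$-commutation \eqref{gens-rels3} $\del_{-1}\del_1=q\del_1\del_{-1}$, which lets one push all $\del_{-1}$'s to the right of all $\del_1$'s at the cost of creating intermediate $\del$'s; the relation \eqref{5termquad} expressing $\del\del_{-1}$ in terms of $\del\del_1$, $\del_1\del$, $\del_1\del_{-1}$ and $\del_{-1}\del$; and the cubic relation \eqref{cubicq} (equivalently the rewriting of $\del^2\del_1$ into special monomials) together with \eqref{ddqdq-1} which handles $\del\del_1\del_{-1}$. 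One first uses the known fact (quoted from \cite{J2}) that words in $\del_1$ and $\del$ alone rewrite into special monomials; then one shows that any occurrence of $\del_{-1}$ can be migrated so that the monomial acquires the shape $\del_1^j\del_{-1}^k\del^\ell$ or a non-standard special monomial with no $\del_{-1}$. The relations \eqref{ddd}, \eqref{ddnd} are exactly what is needed to pull a $\del_{-1}$ leftward past a block $\del^n\del_1$, and \eqref{5termquad}, \eqref{ddqdq-1} dispose of the remaining obstructions $\del\del_{-1}$ and $\del\del_1\del_{-1}$; one must check that each application strictly decreases the chosen monomial order (or the number of ``bad'' sub-patterns) while preserving total degree, so the process terminates.

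For parts (2) and (3) one adjoins $x$. By \eqref{gens-rels1}, $\del_a x = q^a x\del_a + 1$, so $x$ can always be moved to the left past any $\del_a$, reducing the degree in the $\del$'s; more precisely \eqref{dxncomm} handles $\del_a x^n$ and \eqref{gens-rels2} together with \eqref{xdd}, \eqref{ddx} let one reorganize mixed products. Given any word $w$ in $x,\del,\del_1,\del_{-1}$, first use these relations to bring all the $x$'s to the left, writing $w$ as a combination of $x^j m$ with $m$ a word in the $\del$'s; apply part (1) to rewrite $m$ in terms of $\CG$; finally, for the pieces with $j>0$ one must further reduce monomials $x^j\del_1^a\del_{-1}^b\del^c$ and $x^j(\del_1^a\del\del_1^b\del^c)$ to the allowed forms in $\CD$ (which contains only $x^j\del_1^k\del^\ell$ and $x^j\del_{-1}^k\del^\ell$, no mixed $\del_1,\del_{-1}$ and no non-standard specials with a positive power of $x$). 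Here \eqref{gens-rels4} (or \eqref{q-1}) is the key: it expresses $\del_1$ in terms of $\del_{-1}$ modulo a term with an $x$, i.e.\ $\del_{-1}=(1+(q^{-1}-1)x\del_{-1})\del_1$, so in the presence of a left factor $x^j$ one can trade a $\del_1$ for a $\del_{-1}$ (or vice versa) and collapse mixed monomials; similarly \eqref{xdd1-1} kills $x\del_1\del_{-1}$-type patterns. Part (3) is literally the same computation carried out inside $E$, using only relations that the lemmas have certified hold in $E$ — this is why the lemmas were careful to state everything ``in both $D$ and $E$''.

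The main obstacle I expect is \textbf{termination}: choosing a monomial ordering (some lexicographic-type order refining total degree, perhaps weighting $\del_{-1}$ above $\del$ above $\del_1$, and weighting $x$'s appropriately) with respect to which every rewriting move — especially the cubic relation \eqref{cubicq} and the five-term relation \eqref{5termquad}, which replace one monomial by several of the same total degree — is strictly decreasing. One must verify that no pair of moves cycles, i.e.\ that the overlaps/ambiguities resolve (a diamond-lemma style check), and that the relations \eqref{ddd}, \eqref{ddnd}, \eqref{ddqdq-1}, \eqref{xdd1-1}, \eqref{gens-rels4} between them are enough to eliminate every sub-pattern not of the allowed form. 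Once the ordering is fixed the individual reductions are routine; the art is in the bookkeeping. (A cleaner alternative, which I would mention, is to prove spanning directly by showing the span of $\CG$ resp.\ $\CD$ is a left ideal closed under the generators, using \eqref{gens-rels1}, \eqref{gens-rels3}, \eqref{ddnd}, \eqref{5termquad}, \eqref{ddqdq-1}, \eqref{gens-rels4}; this avoids confluence issues entirely and only needs a well-founded degree argument for finiteness.)
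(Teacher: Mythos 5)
Your plan is essentially the paper's proof: the paper does exactly what you describe in your closing "cleaner alternative," namely it shows by induction on total degree that the span of $\mathcal{G}$ (resp.\ of $\sum_{n\geq 1}x^nW+G$ for part (2)) is closed under left multiplication by each generator, using \eqref{gens-rels3}, \eqref{ddd}, \eqref{5termquad}, \eqref{ddqdq-1} for $G$ and \eqref{xdd1-1}, \eqref{dxncomm} and the result $xU\subseteq xW'\oplus U$ from \cite{J2} to collapse $x\cdot(\text{mixed or non-standard monomial})$ in $D$; part (3) is then the observation that only relations valid in $E$ were used. The one point you leave open --- termination --- is resolved in the paper not by a monomial order or a diamond-lemma confluence check but by the induction structure itself: for the only problematic case, $\del\cdot\del_1^j\del_{-1}^k\del^\ell$, relations \eqref{5termquad} and \eqref{ddqdq-1} rewrite the product as elements of $\del_{\pm 1}G_{n-1}$ (already in the span by the outer induction on degree) plus a single term with the exponent of $\del_{-1}$ strictly decreased, so an inner induction on $k$ closes the argument with no cycling possible. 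So your worry is legitimate but is answered by a well-founded double induction exactly as you conjectured, and no confluence verification is needed.
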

\begin{proof}
(1) Note that \eqref{5termquad} and \eqref{ddqdq-1} can be used to express $\del\del_{-1}$ and $\del\del_1\del_{-1}$ as linear combinations of monomials from $\mathcal{G}$.
For $n\geq 0$, let $F_n$ be the subspace of $G_n$ spanned by $G_n\cap\mathcal{G}$. We show by induction on $n$ that $G_n=F_n$. This is certainly true for $n=0,1$. Let $n>1$ and suppose that $G_{n-1}=F_{n-1}$. We need to show that $gG_{n-1}\subseteq F_n$ for all
$g\in \{\del_1, \del_{-1}, \del\}$. By the inductive hypothesis it suffices to show that $gm\in F_n$ if
$m=\del_1^j\del_{-1}^k\del^\ell\in G_{n-1}$ is standard or if $m=\del_1^j\del\del_1^k\del^\ell\in G_{n-1}$ is special but not standard (so $k\neq 0$). This is clearly true for $g=\del_1$.

Consider the case where $g=\del_{-1}$. If $m=\del_1^j\del_{-1}^k\del^\ell$ is standard then $\del_{-1}m$ is standard if $j=0$ and, otherwise, by \eqref{gens-rels3},
$\del_{-1}m\in \del_1G_{n-1}\subseteq F_n$. Let
$m=\del_1^j\del\del_1^k\del^\ell$, $k\neq 0$, be special but not standard. If $j=0$ then, by \eqref{ddd},
\[\del_{-1}m=q^2\del_1\del\del_{-1}\del\del_1^{k-1}\del^\ell\in \del_1G_{n-1}\subseteq F_n.\]
If $j\neq 0$ then, by \eqref{gens-rels3},
\[\del_{-1}m=q\del_1\del_{-1}\del_1^{j-1}\del\del_1^k\del^\ell\in \del_1G_{n-1}\subseteq F_n.\]

Finally consider the case where $g=\del$. If $m$ is special then $\del m$ is a linear combination of special monomials
by \cite[Proposition 2(iii)]{J2}. It remains to show that $\del\del_1^j\del_{-1}^k\del^\ell\in F_n$
for all $j,k,\ell\geq 0$. This is true when $k=0$, because $\del\del_1^j\del^\ell$ is special. Let $k>0$ and suppose, inductively, that $\del\del_1^j\del_{-1}^d\del^\ell\in F_n$ whenever $d<k$.
If $j=0$ then, by \eqref{5termquad}
\begin{eqnarray*}
\del\del_{-1}^k\del^\ell&=&(\del \del_1-q\del_1\del +
(q-1)\del_1\del_{-1}+
q^{-1}\del_{-1} \del)\del_{-1}^{k-1}\del^\ell\\
&\in &\del_{-1}G_{n-1}+\del_1G_{n-1}+\K\del\del_1\del_{-1}^{k-1}\del^\ell\\
&\subseteq& F_n.
\end{eqnarray*}

If $j\neq 0$ then, by \eqref{gens-rels3} and \eqref{ddqdq-1},
\begin{eqnarray*}
\del\del_1^j\del_{-1}^k\del^\ell
&\in&\K\del\del_1\del_{-1}\del_1^{j-1}\del_{-1}^{k-1}\del^\ell\\
&\subseteq& \del_1G_{n-1}+\K\del\del_1^{j+1}\del_{-1}^{k-1}\del^\ell\\
&\subseteq& F_n.
\end{eqnarray*}
This completes the proof that $G_n=F_n$ and that $\mathcal{G}$ spans $G$.

(2) For $n\geq 0$, let $D^n$ be the subspace of $D$ spanned by monomials of degree $\leq n$
in the generators $x, \del, \del_1$ and $\del_{-1}.$ Thus $\{D^n:n\geq 0\}$ is a filtration of $D$ and $D^n=D^1D^{n-1}$. Let $W$ be the subspace spanned by the standard monomials $\del_1^k\del^\ell$ in $\del_1$ and $\del$, and the
standard monomials $\del_{-1}^k\del^\ell$ in $\del_{-1}$ and $\del$. We shall show that
$D=J:=\sum_{n\geq 1}^\infty x^nW+G$. It suffices to show that $hJ\subseteq J$ for all $h\in\{x, \del, \del_1, \del_{-1}\}$ because it will then follow, inductively, that $D^n\subseteq J$ for all $n$.

First consider the case where $h=x$. It is clear that $x(\sum_{n\geq 1}^\infty x^nW)\subseteq J$ so it suffices to show that $xG\subseteq xW+G$. Let $U$ be the graded subalgebra generated by $\del_1$ and $\del$ and let $W^\prime$ be the subspace of $U$ spanned by the standard monomials in $\del_1$ and $\del$. Thus $W^\prime\subseteq W$. Using \eqref{xdd}, with $a=0$ and $b=1$, it is shown in the proof of \cite[Theorem 1]{J2} that, for $n\geq 0$,
$xU_{n+1}\subseteq xW^\prime_{n+1}\oplus U_n$, whence $xU\subseteq xW^\prime\oplus U$. If $m$ is a special monomial in $\del_1$ and $\del$ then $m\in U$ so $xm\in xW^\prime\oplus U\subseteq xW+G$. Now let $m=\del_1^j\del_{-1}^k\del^\ell$ be a standard monomial. If $k=0$ then $m$ is special and $xm\in xW+G$ and, replacing $q$ by $q^{-1}$, the same is true if $j=0$.
So we can assume that $j\neq 0$ and $k\neq 0$. By \eqref{gens-rels3} and \eqref{xdd1-1}, and writing $f=\del_1^{j-1}\del_{-1}^{k-1}\del^\ell$,
\[xm=q^{1-j}x\del_1\del_{-1}f=\frac{q^{1-j}}{q-1}(\del_{-1}-\del_1)f\in G\subseteq J.\]
Thus $xG\subseteq xW+G$, from which it follows that $xJ\subseteq J+xG\subseteq J$ and that $x^nJ\subseteq J$ for all $n\geq1$.

If $h\in\{\del, \del_1, \del_{-1}\}$ it suffices to show that $hx^nW\subseteq J$ for all $n$. Using \eqref{dxncomm},
\[hx^nW\subseteq x^nhW+x^{n-1}W\subseteq x^nG+x^{n-1}G\subseteq x^nJ+x^{n-1}J\subseteq J.\]
This completes the proof that $D=\sum_{n\geq 1}^\infty x^nW+G$ and the result follows from (1).

(3) The proof of (2) uses only relations derived from \eqref{gens-rels1}, \eqref{gens-rels2} and \eqref{gens-rels3} so it also valid for $E$.
\end{proof}

The next step is to show that the elements of $\mathcal{G}$ are linearly independent. We will be working in $D$ rather than $E$ so can exploit the action on $\K[x]$ and the fact that $D$ is a domain.

\begin{lemma}\label{vn}
Let $V_n=xG_{n+1}\cap G_n$. Then $\dim V_n=\dim G_n-1$.
\end{lemma}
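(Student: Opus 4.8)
The plan is to prove the stronger assertion that $G_n=V_n\oplus\K\del_1^n$, where $V_n=xG_{n+1}\cap G_n$. This immediately yields $\dim V_n=\dim G_n-1$, since $G_n$ is finite-dimensional, being spanned by the finitely many elements of $\mathcal G$ of degree $n$ by Lemma~\ref{spanner}(1). There are two ingredients. The first is the inclusion $G_n\subseteq xG_{n+1}+\K\del_1^n$; intersecting with $G_n$ and using $\del_1^n\in G_n$, this gives $G_n=(xG_{n+1}\cap G_n)+\K\del_1^n=V_n+\K\del_1^n$. The second is that $\del_1^n\notin xG_{n+1}$; since $\del_1^n\in G_n$, this forces $V_n\cap\K\del_1^n=0$, so the sum $V_n+\K\del_1^n$ is direct.

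The second ingredient follows at once from the (faithful, by construction) action of $D$ on $\K[x]$. Any $\psi\in G_{n+1}$ is homogeneous of degree $n+1$ for the $\N$-grading of $G$, hence lowers degree in $x$ by $n+1$, so $\psi(x^n)=0$ and therefore $(x\psi)(x^n)=x\cdot\psi(x^n)=0$. On the other hand $\del_1^n(x^n)=\qnum{n}{q}\qnum{n-1}{q}\cdots\qnum{1}{q}$, which is nonzero because $q$ is transcendental, so every $\qnum{k}{q}$ is nonzero. Hence $\del_1^n\neq x\psi$ for all $\psi\in G_{n+1}$.

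For the first ingredient, $G$ is generated by $\del,\del_1,\del_{-1}$, so $G_n$ is spanned by monomials of degree $n$ in these, and it suffices to show that each such monomial $m$ lies in $xG_{n+1}+\K\del_1^n$. I argue by induction on the pair $(t,r)$ ordered lexicographically, where $t=t(m)$ is the number of letters of $m$ different from $\del_1$ and, when $t\geq 1$, $r=r(m)$ is the position of the leftmost such letter. If $t=0$ then $m=\del_1^n$ and there is nothing to prove. If $t\geq1$, write $m=\del_1^{r-1}\del_a m'$ with $a\neq1$ and $m'$ a monomial of degree $n-r$. Using \eqref{xdd} in the form $\del_a=\del_1+x(q^a\del_a\del_1-q\del_1\del_a)$ and then \eqref{dnxcomm} with $a=1$ to commute $\del_1^{r-1}$ past $x$, one obtains
\[
m=\del_1^r m'+q^{r-1}x\del_1^{r-1}h+\qnum{r-1}{q}\,\del_1^{r-2}h,\qquad h:=(q^a\del_a\del_1-q\del_1\del_a)m'\in G_{n-r+2}.
\]
Here $\del_1^r m'$ has $t(\del_1^r m')=t-1$, so it is covered by the inductive hypothesis; $x\del_1^{r-1}h\in xG_{n+1}$ because $\del_1^{r-1}h\in G_{n+1}$; and expanding $\del_1^{r-2}h=q^a\del_1^{r-2}\del_a\del_1 m'-q\del_1^{r-1}\del_a m'=q^a\del_1^{r-2}\del_a\del_1 m'-qm$ brings back a copy of $m$. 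Collecting those copies, the coefficient of $m$ becomes $1+q\qnum{r-1}{q}=\qnum{r}{q}$, which is nonzero, so
\[
m=\qnum{r}{q}^{-1}\Bigl(\del_1^r m'+q^{r-1}x\del_1^{r-1}h+q^a\qnum{r-1}{q}\,\del_1^{r-2}\del_a\del_1 m'\Bigr).
\]
For $r\geq2$ the monomial $\del_1^{r-2}\del_a\del_1 m'$ has the same value of $t$ but its leftmost non-$\del_1$ letter is in position $r-1<r$, so the inductive hypothesis applies; for $r=1$ this term is absent and the formula reads $m=\del_1 m'+xh$, with $\del_1 m'$ covered by the inductive hypothesis. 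In every case $m\in xG_{n+1}+\K\del_1^n$, which completes the induction and proves the first ingredient.

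The one delicate point is the bookkeeping in the last step: one needs induction parameters that strictly decrease even though $m$ reappears on the right after the substitution, and this works precisely because $1+q\qnum{r-1}{q}=\qnum{r}{q}\neq0$ (using that $q$ is not a root of unity) and because the lexicographic order on $(t(m),r(m))$ is a well-order. One must also be careful to keep every correction term inside $G$, indeed inside $xG_{n+1}$, rather than in an uncontrolled part of $D$; this is why \eqref{xdd} (which carries the $x$ on the left) and \eqref{dnxcomm} are exactly the relations to invoke. The rest is routine.
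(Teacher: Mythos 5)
Your proof is correct and follows essentially the same strategy as the paper's: use \eqref{xdd} to show that modulo $xG_{n+1}$ every degree-$n$ monomial in $G_n$ collapses to a scalar multiple of a single $n$-th power (you normalise to $\del_1^n$, the paper normalises to $\del^n$ via the congruences $\qnum{j}{q}\del_1^j\equiv\del^j\bmod xG\cap G$ imported from \cite{J2}), and then evaluate at $x^n$ to see that this power does not lie in $V_n$. The only real difference is that you carry out the reduction by an explicit, self-contained lexicographic induction instead of citing the calculations of \cite{J2}.
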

\begin{proof}
 By calculations in the proof of \cite[Theorem 1]{J2}, based on \eqref{xdd}, $\qnum{j}{q}\del_1^j\equiv \del^j \bmod xG\cap G$ and
$\qnum{j}{q^{-1}}\del_{-1}^j\equiv \del^j \bmod xG\cap G$. As $xG\cap G$ is a right ideal of $G$ it follows that if $\delta\in \{\del_1, \del_{-1}, \del\}$ and $m=m^\prime\delta$ is a monomial of degree $n$ in $\del_1, \del_{-1}$ and $\del$, then $m\equiv \lambda \delta^n \bmod V_n$ for some $\lambda\in \K^*$. As $\qnum{n}{q}\del_1^n\equiv \del^j \equiv \qnum{n}{-1}\del_{q^{-1}}^n\bmod V_n$ it follows that
$m\equiv \mu \del^n \bmod V_n$ for some $\mu\in \K^*$. Thus $V_n$ has codimension
$\leq 1$ in $G_n$. By the action of $\del$ on $\K[x]$, $\del^n\notin V_n$ because
$\del^n.x^n=n!$ whereas $xG_{n+1}.x^n=0$. Therefore $\dim V_n=\dim G_n-1$.
\end{proof}

Recall from \ref{Dxetal} that $S$ is the subalgebra of $\End_\K(\K[x,x^{-1}])$ generated by $\del, \del_1, \del_{-1}$ and $x^{-1}$ and $T$ is the subalgebra of $S$ generated by $\del_1, \del_{-1}$ and $x^{-1}$.
\begin{lemma}\label{TSP}
Let $P$ be the $\K$-algebra generated by $x_1, x_2$ and $x_3$ subject to the relations
\[x_2x_1=q^2x_1x_2,\quad x_3x_1=qx_1x_3\text{ and }x_3x_2=q^{-1}x_2x_3.\]
 Then $c:=x_3^2-qx_1x_2$ and $d:=x_1x_2$ are  normal in $P$ and $T\simeq P/cP$. There exist a $\K$-automorphism
$\sigma$ of $P/Pc$ and a $\sigma$-derivation $\delta$ of $P/cP$ such that $S\simeq(P/cP)[z;\sigma,\delta]$.
Also $x^{-1}$ is normal in $S$ and $S/x^{-1}S\simeq P/dP$.
\end{lemma}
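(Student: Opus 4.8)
The plan is to carry out everything inside the copy of $D_x\simeq A_x[X^{\pm1};\alpha]$ of \ref{Dxetal}, writing $X=\sigma_1$, $X^{-1}=\sigma_{-1}$, $y=x^{-1}$, and introducing the auxiliary elements $u=x^{-1}\sigma_1$ and $v=x^{-1}\sigma_{-1}$ of $S$. From $\sigma_a=1+(q^a-1)x\del_a$ one reads off $u=y+(q-1)\del_1$ and, using \eqref{dbinv} together with $\sigma_{-1}x^{-1}=qx^{-1}\sigma_{-1}$ (a consequence of \eqref{sx}), $\del_{-1}=\sigma_{-1}\del_1=q(q-1)^{-1}(y-v)$; hence $T=\langle u,v,y\rangle$ and $S=\langle u,v,y,\del\rangle$. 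Since $x^{-1}\sigma_1=q\sigma_1x^{-1}$ by \eqref{sx}, a brief computation in $A_x[X^{\pm1};\alpha]$ gives the relations $vu=q^2uv$, $yu=quy$, $yv=q^{-1}vy$ and $y^2=quv$. There is therefore a surjection $P\onto T$ with $x_1\mapsto u$, $x_2\mapsto v$, $x_3\mapsto y$, and it annihilates $c=x_3^2-qx_1x_2$, so it factors as $P/cP\onto T$. That $c$ and $d=x_1x_2$ are normal in $P$ is a direct check: $P$ is an iterated skew polynomial ring over $\K$ (a quantum affine $3$-space), hence a Noetherian domain, and each of $cx_i$, $dx_i$ is a scalar multiple of $x_ic$, $x_id$ for $i=1,2,3$.

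The quotient $P/cP$ then has the $\K$-basis $\{x_1^ax_2^bx_3^\varepsilon:a,b\ge0,\ \varepsilon\in\{0,1\}\}$, of dimension $2n+1$ in degree $n$. To see that $P/cP\onto T$ is injective I would push this basis into $A_x[X^{\pm1};\alpha]$: using $x^{-1}\sigma_1=q\sigma_1x^{-1}$ one finds $u^av^by^\varepsilon$ is a nonzero scalar multiple of $x^{-(a+b+\varepsilon)}X^{a-b}$, and as $(a,b,\varepsilon)\mapsto(a+b+\varepsilon,\,a-b)$ is injective on the index set and the $x^iX^j$ belong to a $\K$-basis of $A_x[X^{\pm1};\alpha]$, these are linearly independent. (Equivalently, $P/cP\onto T$ is a graded surjection between algebras with equal Hilbert series.) Hence $P/cP\simeq T$.

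For the Ore extension description of $S$, I would compute how $\del$ commutes past $u,v,y$ in $A_x[X^{\pm1};\alpha]$. From the derivation rule $\del x^{-1}=x^{-1}\del-x^{-2}$ on $\K[x,x^{-1}]$ and from $\del\sigma_1=q\sigma_1\del$, $\del\sigma_{-1}=q^{-1}\sigma_{-1}\del$ (which is \eqref{ds} with $a=0$), one gets
\begin{equation*}
\del y=y\del-y^2,\qquad \del u=qu\del-yu,\qquad \del v=q^{-1}v\del-yv.
\end{equation*}
Transporting along the isomorphism $P/cP\simeq T$, the rules $x_1\mapsto qx_1,\ x_2\mapsto q^{-1}x_2,\ x_3\mapsto x_3$ define a $\K$-automorphism $\sigma$ of $P$, and $x_1\mapsto -x_3x_1,\ x_2\mapsto -x_3x_2,\ x_3\mapsto -x_3^2$ define a $\sigma$-derivation $\delta$ of $P$; one checks these respect the three defining relations of $P$ (the only mildly laborious point) and that $\sigma(c)=c$, $\delta(c)=-2cx_3\in cP$, so $(\sigma,\delta)$ descends to $P/cP$. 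The displayed relations then say precisely that $\bar x_1\mapsto u,\ \bar x_2\mapsto v,\ \bar x_3\mapsto y,\ z\mapsto\del$ defines a homomorphism $(P/cP)[z;\sigma,\delta]\to S$, which is onto; it is injective because the PBW basis $\{x_1^ax_2^bx_3^\varepsilon z^k\}$ maps (now also using $\del X=qX\del$) to scalar multiples of the linearly independent elements $x^{-(a+b+\varepsilon)}\del^kX^{a-b}$. Thus $S\simeq(P/cP)[z;\sigma,\delta]$.

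Finally, $x^{-1}$ is normal in $S$: the relations $\del y=y\del-y^2$, $y\del_1=(q\del_1+y)y$, $y\del_{-1}=(q^{-1}\del_{-1}+y)y$ (with their left--right mirrors) show $x^{-1}S=Sx^{-1}$. Under $S\simeq(P/cP)[z;\sigma,\delta]$, $x^{-1}$ corresponds to $\bar x_3$, which is normal in $P/cP$ with $\sigma(\bar x_3)=\bar x_3$ and $\delta(\bar x_3)=-\bar x_3^2\in\bar x_3(P/cP)$; hence $S/x^{-1}S\simeq\big((P/cP)/\bar x_3(P/cP)\big)[\bar z;\bar\sigma,\bar\delta]$. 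Now $(P/cP)/\bar x_3(P/cP)=P/(cP+x_3P)=P/(dP+x_3P)$ because $c=x_3^2-qd$, the induced derivation $\bar\delta$ vanishes since $\delta(x_1),\delta(x_2)\in x_3P$, and reassembling the skew polynomial ring in $\bar z$ over $\K\langle x_1,x_2\rangle/(x_2x_1-q^2x_1x_2,\ x_1x_2)$ with $\bar zx_1=qx_1\bar z$, $\bar zx_2=q^{-1}x_2\bar z$ reproduces exactly $P/dP$, with $\bar z$ in the role of $x_3$. The main obstacle throughout is bookkeeping rather than ideas: organising the linear-independence (equivalently Hilbert-series) arguments in $A_x[X^{\pm1};\alpha]$, and verifying that $\delta$ is a genuine $\sigma$-derivation compatible with all the relations of $P$.
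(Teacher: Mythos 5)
Your proof is correct, and it reaches the two key isomorphisms $T\simeq P/cP$ and $S\simeq (P/cP)[z;\sigma,\delta]$ by a genuinely different mechanism from the paper's. The scaffolding is identical on both sides: the elements $u=x^{-1}\sigma_1$, $v=x^{-1}\sigma_{-1}$ (the paper's $w_1,w_{-1}$), the relations among $u,v,x^{-1},\del$, the normality of $c$ and $d$, and the pair $(\sigma,\delta)$ induced from $x_i\mapsto q^{\pm1}x_i$, $x_i\mapsto -x_3x_i$, $x_3\mapsto -x_3^2$ on $P$. The divergence is in how injectivity of the two surjections is established. The paper localizes: it proves that $P_{\mathcal X}/cP_{\mathcal X}$ is simple (via the observation that $qx_1x_2$ has no square root in the quantum torus), so a nonzero kernel of $P/cP\onto T$ would contain a monomial, contradicting that $T$ is a domain; and it identifies the localization of $(P/cP)[z;\sigma,\delta]$ at the powers of $x_3$ with a simple ring $A_1[u_1^{\pm 1};\tau]$ to kill the kernel of the map onto $S$. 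You instead compute the images of the PBW bases $\{x_1^ax_2^bx_3^\varepsilon\}$ and $\{x_1^ax_2^bx_3^\varepsilon z^k\}$ explicitly as nonzero scalar multiples of $x^{-(a+b+\varepsilon)}X^{a-b}$ and $x^{-(a+b+\varepsilon)}\del^kX^{a-b}$ inside $D_x\simeq A_x[X^{\pm1};\alpha]$, and observe that $(a,b,\varepsilon,k)\mapsto(a+b+\varepsilon,k,a-b)$ is injective on the index set, so these images sit inside the known basis $\{x^i\del^kX^j\}$ of $D_x$. This is more elementary and self-contained for the lemma at hand, whereas the paper's route yields the simplicity of the localized algebras as a by-product. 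The one step you should make explicit is that $\{x_1^ax_2^bx_3^\varepsilon\}$ really is a basis of $P/cP$: this holds because $c$ is monic of degree $2$ in $x_3$ over the quantum plane $B$ inside the Ore extension $B[x_3;\alpha]$ and is normal, so division with remainder gives $P=cP\oplus(B+Bx_3)$. Two small remarks: your sign in $\del x^{-1}=x^{-1}\del-x^{-2}$ is the correct one (the paper's displayed relation \eqref{dxm} has a sign slip, although the $(\sigma,\delta)$ it goes on to define are the right ones), and your computation of $S/x^{-1}S\simeq P/dP$ coincides with the paper's.
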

\begin{proof}
It is straightforward to check that $c$ and $d$ are normal in $P$.  Let $B$ be the subalgebra generated by $x_1$ and $x_2$, a quantum plane with parameter $q^2$ and let $C$ be the localization of $B$ at the powers of $x_1$ and $x_2$, a quantum torus. It is well-known that $C$ is simple,
see for example \cite[Corollary 1.18]{GW}. The algebra $P$ and its localization $P_\mathcal{X}$ at the powers of $x_1, x_2$ and $x_3$
are, respectively, the skew polynomial ring $B[x_3;\alpha]$ and the skew Laurent polynomial ring $C[x_3^{\pm 1};\alpha]$ where $\alpha$ is the $\K$-automorphism of $B$, resp. $C$, such that $\alpha(x_1)=qx_1$ and $\alpha(x_2)=q^{-1}x_2$.

Let $Q=P/cP$ and $Q_\mathcal{X}=P_\mathcal{X}/cP_\mathcal{X}$. Every element of
$Q_\mathcal{X}$ has a unique expression of the form $ay+b$, where $a,b\in C$ and $y$ is the image of
$x_3$ in $Q_\mathcal{X}$. Let $I$ be a non-zero ideal of $Q_\mathcal{X}$ and let
$\tau(I)=\{f\in C: fy+g\in I\text{ for some }g\in C\}$. It is readily checked that $\tau(I)$ is a non-zero ideal of
$C$ whence $\tau(I)=C$ and there exists $g\in C$ such that $y+g\in I$. Also $I\cap C=0$ or $I\cap C=C$ and, in the latter case $I=Q_\mathcal{X}$.
Let $r=qx_1x_2\in C$. Then
$I\ni (y+g)y=r+gy$ and $I\ni g(y+g)=gy+g^2$. Hence $r-g^2\in I\cap C$ so either $r=g^2$ or $I=Q_\mathcal{X}$. But it is clear, by a degree argument,  that $r$ has no square root in $C$. Thus $I=Q_\mathcal{X}$ which is therefore simple.

Now let $w_1=x^{-1}\sigma_1=x^{-1}+(q-1)\del_1$ and $w_{-1}=x^{-1}\sigma_{-1}=x^{-1}+(q^{-1}-1)\del_{-1}.$ Then $w_1, w_{-1}$ and $x^{-1}$
generate $T$ while $w_1, w_{-1}, \del$ and $x^{-1}$ generate $S$. The actions of $w_1$ and $w_{-1}$ on $\K[x]$ are given by
\[w_1:x^n\mapsto q^nx^{n-1}, \quad w_{-1}:x^n\mapsto q^{-n}x^{n-1}.\]
From this it is clear that

\begin{center}
\begin{tabular}{cc}
$w_{-1}w_1=q^2w_1w_{-1}$, &$x^{-1}w_1=qw_1x^{-1}$,\\
$x^{-1}w_{-1}=q^{-1}w_{-1}x^{-1}  \textit{ and }$ &$qw_1w_{-1}=x^{-2}$.
\end{tabular}
\end{center}

Hence there is a surjective $\K$-homomorphism $\theta:Q\rightarrow T$ such that
$\theta(x_1)=w_1$, $\theta(x_2)=w_{-1}$ and $\theta(y)=x^{-1}$. Let $K=\ker\theta$ so that $T\simeq Q/K$. By the simplicity of $Q_\mathcal{X}$,
either $K=0$ or $x_1^jx_2^kx_3^\ell\in K$ for some $j,k,\ell\geq 0$. But $T$ is a domain. Clearly $x_i\notin K$, for $i=1,2,3$, so $K=0$ and
$T\simeq Q$.

From their actions on $\K[x]$, or from \eqref{gens-rels1}, \eqref{ds} and \eqref{sx}, it is easy to check the following relations between $\del$ and the generators of $T$:
\begin{eqnarray}
\del w_1&=&qw_1\del-x^{-1}w_1,\label{dwq}\\
\del w_{-1}&=&{q^{-1}}w_{-1}\del-x^{-1}w_{-1},\label{dwqm}\\
\del x^{-1}&=&-x^{-1}\del-x^{-2}\label{dxm}.
\end{eqnarray}
There is a $\K$-automorphism $\rho$ of $P$ such that $\rho(x_1)=qx_1$, $\rho(x_2)=q^{-1}x_2$ and $\rho(x_3)=x_3$. It is a routine matter to verify that there is a left $\rho$-derivation $\gamma$ such that
$\gamma(x_1)=-x_3x_1$, $\gamma(x_2)=-x_3x_2$ and $\gamma(x_3)=-x_3^2$ and that $\rho(c)=c$ and $\gamma(c)=-2x_3c\in cP$. Consequently $\rho$ and $\gamma$ induce, respectively, a $\K$-automorphism $\sigma$ of $Q$ and a $\sigma$-derivation $\delta$ of $Q$. By \eqref{dwq}, \eqref{dwqm} and \eqref{dxm} and the universal property of skew polynomial rings \cite[Theorem 2.4]{GW}, there is a $\K$-homomorphism $\phi:Q[z;\sigma,\delta]\twoheadrightarrow S$ such that
$\phi(x_1)=w_1$, $\phi(x_2)=w_{-1}$, $\phi(y)=x^{-1}$ and $\phi(z)=\del$.

The element $y$ is clearly normal in the domain $Q[z;\sigma,\delta]$.  Let $L$ be the localization of
$Q[z;\sigma,\delta]$ at the powers of $y$. As $y^2=qx_1x_2$, $x_1$ and $x_2$ are also invertible in $L$ which is generated by $z, y, u:=y^{-1}, u_1:=u^{-1}x_1, u_2:=u^{-1}x_2=u_1^{-1}$. Here $\del u-u\del=1$ so
$\del$ and $u$ generate a copy of the first Weyl algebra $A_1$. Also $u_1\del=q^{-1}\del u_1$ and $u_1u=quu_1$
so $L$ is isomorphic to the simple ring $A_1[u_1^{\pm 1};\tau]$ where $\tau(\del)=q^{-1}\del$ and $\tau(u)=qu$.
Consequently either $\ker \phi=0$ or, as $Q[z;\sigma,\delta]$ is a domain, $y\in \ker\phi$. But, by the action on $x^{-1}$ on  $\K[x]$, $\phi(y)\neq 0$ so $\phi:Q[z;\sigma,\delta]\rightarrow S$ is an isomorphism. Notice that $L\simeq D_x$.

The algebra
$Q[z;\sigma,\delta]/yQ[z;\sigma,\delta]$ is generated by (the images of) $x_1$, $x_2$ and $z$ subject to the relations
\[zx_1=qx_1z,\quad zx_2=q^{-1}x_2z\text{ and } x_1x_2=0\]
so
$Q[z;\sigma,\delta]/yQ[z;\sigma,\delta]
\simeq P/dP$.

Also $S/x^{-1}S\simeq Q[z;\sigma,\delta]/yQ[z;\sigma,\delta]\simeq P/dP$.
\end{proof}

\begin{theorem}
\begin{enumerate}
\item For $n\geq 1$, $\dim G_n=n^2+n+1$ and $\mathcal{G}$ is a basis for $G$. The Hilbert series for $G$ is
$(1+t^2)/(1-t)^{-3}$ and $\GK(G)=3$.
\item
$\mathcal{D}$
is a basis for $D$.
\end{enumerate}
\end{theorem}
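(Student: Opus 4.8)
I would prove part (1) by computing $\dim_{\K}G_n$ via a comparison with the larger algebra $S$ of Notation~\ref{Dxetal}, and then derive part (2) from it. First, a count: in degree $n$ there are $\binom{n+2}{2}$ standard monomials $\del_1^j\del_{-1}^k\del^\ell$ and $\binom{n}{2}$ non-standard special monomials in $\del_1,\del$, so $|\mathcal{G}_n|=\binom{n+2}{2}+\binom{n}{2}=n^2+n+1$; since $\mathcal{G}$ spans $G$ by Lemma~\ref{spanner}, it suffices to show $\dim_{\K}G_n=n^2+n+1$. By Lemma~\ref{TSP}, $S\simeq(P/cP)[z;\sigma,\delta]$ as graded algebras with $z$ in degree $1$; as $P=B[x_3;\alpha]$ is an iterated Ore extension with $\dim P_k=\binom{k+2}{2}$ and $c$ is a homogeneous normal, hence regular, element of degree $2$, one gets $\dim(P/cP)_k=\binom{k+2}{2}-\binom{k}{2}=2k+1$ and therefore $\dim S_n=\sum_{k=0}^n(2k+1)=(n+1)^2$.

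The bridge is the identity $S_n=\sum_{j=0}^n x^{-j}G_{n-j}$, which follows by induction on $n$ from $S_n=S_1S_{n-1}$ and the relation $\del_a x^{-j}=q^{-ja}x^{-j}\del_a+\mu_j x^{-(j+1)}$ (for suitable scalars $\mu_j$, a consequence of $\del_a x=q^ax\del_a+1$), which lets one push powers of $x^{-1}$ to the left past each $\del_a$. Put $Y_j=\sum_{k=0}^j x^{-k}G_{n-k}$, so $Y_0=G_n$, $Y_n=S_n$, and $Y_j=Y_{j-1}+x^{-j}G_{n-j}$. Multiplying by $x^j$ (injective since $D$ is a domain) carries $x^{-j}G_{n-j}\cap Y_{j-1}$ onto $G_{n-j}\cap\sum_{i=1}^j x^iG_{n-j+i}$; this space contains $V_{n-j}=G_{n-j}\cap xG_{n-j+1}$, of codimension $1$ in $G_{n-j}$ by Lemma~\ref{vn}, and it is contained in $G_{n-j}\cap\sum_{i\ge1}x^iG_{n-j+i}$, which is a proper subspace of $G_{n-j}$ because $\del^{n-j}$ does not lie in it: every element of $\sum_{i\ge1}x^iG_{n-j+i}$ kills $x^{n-j}$ (each $g\in G_{n-j+i}$, being homogeneous of degree $>n-j$, does), whereas $\del^{n-j}$ sends $x^{n-j}$ to $(n-j)!\neq0$. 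Hence $x^{-j}G_{n-j}\cap Y_{j-1}$ has codimension exactly $1$ in $G_{n-j}$, so $\dim Y_j=\dim Y_{j-1}+1$ for $1\le j\le n$; thus $\dim S_n=\dim G_n+n$ and $\dim G_n=n^2+n+1$. It follows that $\mathcal{G}_n$, which spans $G_n$ and has $n^2+n+1$ elements, is a basis, that the Hilbert series of $G$ is $\sum_{n\ge0}(n^2+n+1)t^n=(1+t^2)/(1-t)^3$, and --- its numerator being nonzero at $t=1$, so that it has a pole of order $3$ there --- that $\GK(G)=3$.

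For part (2), $\mathcal{D}$ spans $D$ by Lemma~\ref{spanner}, so only linear independence remains. Any dependence lies in a single homogeneous component $D_m$ of the $\Z$-grading of $D$ (in which $\del_a$ has degree $1$ and $x$ degree $-1$), and since $D$ acts faithfully on $\K[x]$, where a homogeneous $\varphi\in D_m$ acts by $x^i\mapsto c_i(\varphi)x^{i-m}$, independence in $D_m$ amounts to independence of the coefficient functions $i\mapsto c_i(\varphi)\in\K[i,q^i,q^{-i}]$ (assign the monomial $i^aq^{bi}$ total degree $a+|b|$). For $g\in\mathcal{G}$ of degree $m$ this function has total degree $\le m$, while for $x^jw$ with $j\ge1$ and $w$ a standard monomial of degree $m+j$ in $\del_1,\del$ (or in $\del_{-1},\del$) one has $c_i(x^jw)=c_i(w)$ of total degree $m+j>m$, with leading part a nonzero scalar times $i^\ell q^{ki}$ (resp.\ $i^\ell q^{-ki}$), the exponent pair $(\ell,\pm k)$ determining $w$. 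Examining the top total degree occurring in a putative relation, it involves only the $x^jw$ with $j$ maximal and reduces to a combination of distinct leading monomials, forcing all those coefficients to vanish; this contradicts maximality unless no $x^jw$ occurs, so the relation reduces to one among $\mathcal{G}_m$, trivial by part (1).

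The technical heart is the dimension count for $G_n$: Lemma~\ref{vn} handles $G_m\cap xG_{m+1}$, but one still has to establish the identity $S_n=\sum_j x^{-j}G_{n-j}$ and show that $G_m\cap\sum_{i\ge1}x^iG_{m+i}$ remains of codimension one in $G_m$ --- equivalently, that adjoining $x^{-1}$ to $G$ enlarges each graded component by exactly one dimension.
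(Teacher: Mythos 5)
Your proof is correct, but it reorganizes the argument in a genuinely different way from the paper, even though both ultimately rest on Lemma~\ref{TSP} and Lemma~\ref{vn}. For part (1), the paper uses only the quotient $S/x^{-1}S\simeq P/dP$ to show that the standard monomials spanning $W$ are independent (so $\dim W_n=2n+1$) and that the sum $\sum_{n\geq1}x^nW+G$ is direct, and then extracts the recurrence $d_{n+1}=d_n+2n+2$ from $xG_{n+1}=xW_{n+1}\oplus V_n$; you instead use the full graded presentation $S\simeq(P/cP)[z;\sigma,\delta]$ to compute $\dim S_n=(n+1)^2$ outright and then telescope down the chain $Y_0=G_n\subseteq Y_1\subseteq\dots\subseteq Y_n=S_n$, where each step has codimension exactly one by combining Lemma~\ref{vn} (lower bound) with the evaluation $\del^{n-j}(x^{n-j})=(n-j)!\neq0$ (upper bound). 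Both computations are sound; the paper's version has the advantage of producing, as a by-product, the direct sum decomposition $D=\bigoplus_{n\geq1}x^nW\oplus G$, from which part (2) is immediate, whereas your part (2) requires a separate leading-coefficient argument on the action on $\K[x]$ (in the spirit of \cite{J2}). That argument is also correct, but you should make explicit the one fact it silently uses: that the functions $i\mapsto i^aq^{bi}$, for $(a,b)\in\N\times\Z$, are linearly independent over $\K$, which holds because $q$ is not a root of unity (it is transcendental over $\Q$); without that, passing to ``the top total degree part'' of a vanishing coefficient function would not be justified. Finally, your Hilbert series $(1+t^2)/(1-t)^3$ is the intended one (the exponent $-3$ in the paper's statement is a typographical slip).
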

\begin{proof}
(1) The number of monomials of degree $n$ in the spanning set $\mathcal{G}$ for $G$ is $n^2+n+1=\frac{1}{2}((n+2)(n+1)+n(n-1))$,  the number of standard monomials in $\del_1, \del_{-1}$ and $\del$ of degree $n$ being $\frac{1}{2}(n+2)(n+1)$ and the number of non-standard special monomials in $\del_1$ and $\del$ of degree $n$ being $\frac{1}{2}n(n-1)$. The generating function for the sequence $\{n^2+n+1\}_{n\geq 0}$ is $(1+t^2)/(1-t)^{-3}$. So it
suffices to show that $\dim G_n=n^2+n+1$.

Let $W$ denote the $\K$-subspace of $G$ spanned by the standard monomials in $\del_1$ and $\del$ and the standard monomials in $\del_{-1}$ and $\del$ and let $W_n=W\cap G_n$. Let $\theta$ be the composition
of the homomorphisms \[G\hookrightarrow S\twoheadrightarrow S/x^{-1}S\simeq P/dP,\]
where $P$ and $d$ are as in Lemma~\ref{TSP}.
Thus $\theta(\del_1)=\overline{x_1}$, $\theta(\del_{-1})=\overline{x_2}$,
$\theta(\del)=\overline{x_3}$ and $\theta(\del_1\del_{-1})=0$. The standard monomials in $\theta(\del_1)$ and $\theta(\del)$ and the standard monomials in $\theta(\del_{-1})$ and $\theta(\del)$ form a basis for
$P/dP$. It follows that the standard monomials in $\del_1$ and $\del$ and the standard monomials in $\del_{-1}$ and $\del$
are linearly independent. Hence $\dim_\K W_n=2n+1$ for all $n\geq 0$. It also follows that the restriction of $\theta$ to $W$ is injective so, as $\theta(G\cap x^{-1}S)=0$, we have $W\cap x^{-1}S=0$, whence $W\cap x^{-1}G=0$
and $xW\cap G=0$. The sum $J:=\sum_{n\geq 1}^\infty x^nW+G$ is direct. To see this, let
$J_k=\sum_{n\geq 1}^k x^nW+G$ for $k\geq 1$. We have seen that the sum $J_1+G$ is direct. Suppose that $k>1$ and that the sum $J_{k-1}$ is direct. Let $x^kw\in x^kW\cap J_{k-1}$. Then $w\in W\cap x^{-1}S=0$ so $J_k$ is direct. By induction, $J$ is a direct sum.

In the proof of Lemma~\ref{spanner}, we saw that $xG\subseteq xW+G$, whence $xG_{n+1}\subseteq xW_{n+1}\oplus G_n$.
Therefore $xG_{n+1}=xW_{n+1}\oplus V_n$, where $V_n=xG_{n+1}\cap G_n$ as in Lemma~\ref{vn}. By Lemma~\ref{vn},
$\dim_\K V_n=d_n-1$. As $x$ is invertible in $\End_\K \K[x^{\pm 1}]$,  $\dim_\K xG_{n+1}=d_{n+1}$ and
$\dim_\K xW_{n+1}=\dim_\K W_{n+1}=2n+3$. Therefore
$d_{n+1}=d_n+2n+2$. It is readily checked that $d_n=n^2+n+1$ is the solution to this difference equation, with initial condition $d_0=1$. Thus $\dim_\K G_{n}=n^2+n+1$ as required, completing the proof as indicated at the start.

(2) We have seen that $D=J=\sum_{n\geq 1}^\infty x^nW+G$ and that this sum is direct. As $x$ is regular in $\End_\K(\K[x])$ and $W$ has a basis $\mathcal{W}$ consisting of the monomials that are standard in $\del_1$ and $\del$ or in
$\del_{-1}$, each summand $x^nW$ has basis $x^n\mathcal{W}$ while $G$ has basis $\mathcal{G}$ by (1). The result follows as the sum is direct.

\end{proof}

\begin{corollary}
$D\simeq E$ is generated by the four generators $x, \del_1, \del_{-1}$ and $\del$ and presented by the  relations \eqref{gens-rels1}, \eqref{gens-rels2} and \eqref{gens-rels3}.
\end{corollary}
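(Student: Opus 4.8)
The plan is to show that the canonical surjection $\theta\colon E\twoheadrightarrow D$ constructed above is an isomorphism; this is precisely the assertion that \eqref{gens-rels1}, \eqref{gens-rels2} and \eqref{gens-rels3} are a complete set of defining relations for $D$ on the generators $x,\del_1,\del_{-1},\del$. All the substantive work has already been carried out, so the remaining argument is short and essentially linear-algebraic.

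First I would record that $\theta$ restricts to a bijection $\mathcal{E}\to\mathcal{D}$. By construction $\mathcal{E}$ is the set of monomials in $E$ obtained by reading, inside $E$, exactly the same words in $x,\del_1,\del_{-1},\del$ that define the elements of $\mathcal{D}$ in $D$; and $\theta$ sends each such word in $E$ to the corresponding word in $D$. Thus $\theta$ maps $\mathcal{E}$ onto $\mathcal{D}$ respecting this labelling, and the map on labels is a bijection. By Lemma~\ref{spanner}(3), $\mathcal{E}$ spans $E$ over $\K$, while by part~(2) of the preceding Theorem, $\mathcal{D}$ is a $\K$-basis of $D$.

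The key point is then that a spanning set whose image under a $\K$-linear map is linearly independent is itself linearly independent and maps bijectively onto its image. Concretely, suppose $\sum_i c_i m_i = 0$ is a finite $\K$-linear relation among distinct elements $m_i\in\mathcal{E}$. Applying $\theta$ gives $\sum_i c_i\,\theta(m_i)=0$, a relation among distinct elements $\theta(m_i)\in\mathcal{D}$, which forces every $c_i=0$. Hence $\mathcal{E}$ is linearly independent, and therefore a $\K$-basis of $E$. Since $\theta$ carries the basis $\mathcal{E}$ bijectively onto the basis $\mathcal{D}$, it is injective; being also surjective, $\theta$ is an isomorphism, which is the content of the corollary.

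I do not anticipate any real obstacle. The only step meriting a moment's care is confirming that $\theta$ genuinely induces a bijection between the indexing sets of $\mathcal{E}$ and $\mathcal{D}$, rather than merely a surjection; but this is immediate from the fact that $\mathcal{E}$ was defined as ``the corresponding set of monomials in $E$.''
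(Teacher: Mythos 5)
Your argument is correct and is essentially identical to the paper's own proof: both use that $\theta$ carries the spanning set $\mathcal{E}$ of $E$ (Lemma~\ref{spanner}(3)) onto the linearly independent set $\mathcal{D}$ of $D$, so any linear relation among elements of $\mathcal{E}$ is killed by applying $\theta$, forcing $\mathcal{E}$ to be a basis and $\theta$ to be injective. Nothing further is needed.
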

\begin{proof}
There is a surjective homomorphism $\theta:E\rightarrow D$. The elements of $\mathcal{D}$ are linearly independent so application of $\theta$ to any linear combination of elements of $\mathcal{E}$ shows that the same is true for
the spanning set $\mathcal{E}$ of $E$ from which it follows that $\theta$ is an isomorphism.
\end{proof}

%%%%%%%%%%%%%%%%%%%%%%%%%%%%%%%%%%%%%%%%%%%%%%%%%%%%%%%%%%%%%%%%%%%%%%%%%%%%%%%%%
\begin{remark}\label{moreonDxx-1}
We close this subsection with some comments on the algebra $D_q(\K [x,x^{-1}])$.
It is shown in \cite[Theorem 3.1]{IM1} that
this algebra is generated by
the set $\{ \lambda_x, \lambda_{x^{-1}}, \del, \del_{1},
\del_{-1} \}$. This is the algebra denoted $D_x$ in Notation~\ref{Dxetal} so
$D_q(\K [x,x^{-1}])\simeq A_x[X^{\pm 1};\alpha],$ a skew Laurent polynomial ring over the first Weyl algebra.
Furthermore, the relations (\ref{gens-rels1})-(\ref{gens-rels3}) along with the relation
$\lambda_x \lambda_{x^{-1}} = \lambda_{x^{-1}}\lambda_x =1$ form a full set of defining relations for these generators. Note that the proof  in \cite[Theorem 3.1]{IM1} for the case of $\K [x,x^{-1}]$ is valid
whereas the proof for $\K [x]$ is invalid.
An alternative set of generators is $\{ \lambda_x, \lambda_{x^{-1}}, \del, \sigma_{1},
\sigma_{-1}\}$.

\end{remark}
We thank S. Datt for pointing out the following:
\begin{corollary}\label{datt}
There exists an isomorphism between  $D_q(\K [x,x^{-1}])$ and its
opposite algebra, $D_q(\K [x,x^{-1}])^o$, such that the isomorphism restricts to
an isomorphism of $D_q(\K [x])$ and its opposite, $D_q(\K [x])^o$.
\end{corollary}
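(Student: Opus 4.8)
The plan is to construct an involutive anti-automorphism $\tau$ of $D_x:=D_q(\K[x,x^{-1}])$ that carries the subalgebra $D:=D_q(\K[x])$ onto itself. Since an anti-automorphism of an algebra is exactly an isomorphism onto its opposite algebra, and since $D$ embeds in $D_x$ (Notation~\ref{Dxetal}), such a $\tau$ yields an isomorphism $D_x\to D_x^o$ whose restriction is an isomorphism $D\to D^o$, as required.

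By Remark~\ref{moreonDxx-1}, $D_x$ is generated by $x,\,x^{-1},\,\del,\,\del_1,\,\del_{-1}$ subject to the relations \eqref{gens-rels1}, \eqref{gens-rels2}, \eqref{gens-rels3} together with $xx^{-1}=x^{-1}x=1$. I would define $\tau$ on these generators by
\[
\tau(x)=x,\qquad \tau(x^{-1})=x^{-1},\qquad \tau(\del_a)=-q^{-a}\del_{-a}\quad\text{for }a=-1,0,1,
\]
so that $\tau(\del)=-\del$, $\tau(\del_1)=-q^{-1}\del_{-1}$ and $\tau(\del_{-1})=-q\del_1$. The key step is to check that this assignment respects all the defining relations once the order of every product is reversed; then $\tau$ extends uniquely to a $\K$-algebra homomorphism $D_x\to D_x^o$, i.e.\ an anti-endomorphism of $D_x$. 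This is a short calculation: the reversed form of \eqref{gens-rels1} for the index $a$ becomes, after substituting the images, $-q^{-a}\bigl(x\del_{-a}-q^{a}\del_{-a}x\bigr)$, and \eqref{gens-rels1} for the index $-a$ gives $x\del_{-a}-q^{a}\del_{-a}x=-q^{a}$, so the value is $1$; the reversed form of \eqref{gens-rels2} for $a\neq b$ is, up to the common scalar $q^{-a-b}$, the relation \eqref{gens-rels2} for the distinct indices $-a$ and $-b$; \eqref{gens-rels3} is carried to itself; and $xx^{-1}=x^{-1}x=1$ is self-dual. (The scalars $-q^{-a}$, the interchange $\del_1\leftrightarrow\del_{-1}$ and the sign on $\del$ are all forced by compatibility with the reversed form of \eqref{gens-rels1}.)

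It remains to see that $\tau$ is bijective and restricts to $D$. Since $\tau(\del_{-a})=-q^{a}\del_{a}$, one computes $\tau^2(\del_a)=\del_a$, while obviously $\tau^2(x)=x$ and $\tau^2(x^{-1})=x^{-1}$; as $\tau^2$ is an algebra endomorphism of $D_x$ agreeing with the identity on a generating set, $\tau^2=\mathrm{id}_{D_x}$. Hence $\tau$ is an involutive anti-automorphism of $D_x$, equivalently an isomorphism $D_x\to D_x^o$. Finally, $\tau$ maps each of the generators $x,\del,\del_1,\del_{-1}$ of $D$ into $D$, so $\tau(D)\subseteq D$, and applying this to $\tau^{-1}=\tau$ gives $\tau(D)=D$; thus $\tau|_D$ is an involutive anti-automorphism of $D$, i.e.\ an isomorphism $D\to D^o$, and it is the restriction of the isomorphism $D_x\to D_x^o$.

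I do not expect a genuine obstacle here. The only point requiring care is fixing the scalars $-q^{-a}$ so that the reversed form of \eqref{gens-rels1} holds on the nose, and noting at the outset that $D$ really sits inside $D_x$ (Notation~\ref{Dxetal}), so that talk of "restriction" is meaningful.
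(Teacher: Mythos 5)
Your map is exactly the one the paper uses ($\Phi(\lambda_x)=(\lambda_x)^o$, $\Phi(\del)=-(\del)^o$, $\Phi(\del_1)=-q^{-1}(\del_{-1})^o$, $\Phi(\del_{-1})=-q(\del_1)^o$), and your verification of the reversed relations \eqref{gens-rels1}--\eqref{gens-rels3}, the involutivity, and the stability of $D$ is correct and fills in details the paper leaves to the reader. This is essentially the same approach as the paper's proof.
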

\begin{proof}
Use the generators to define $\Phi : D_q(\K [x,x^{-1}]) \to D_q(\K [x,x^{-1}])$
as follows:

\begin{center}
\begin{tabular}{ccc}
$\Phi (\lambda_x)=(\lambda_x)^o$,
&$\Phi (\lambda_{x^{-1}})=(\lambda_{x^{-1}})^o$,
&$\Phi (\del)=-(\del)^o$,\\
$\Phi(\del_1)=-q^{-1}(\del_{-1})^o$,
&$\Phi(\del_{-1})=-q(\del_1)^o$. &{}
\end{tabular}
\end{center}

This $\Phi$ gives the required isomorphism.  In terms of the
alternative set of generators given in Remark \ref{moreonDxx-1}
above, $\Phi (\lambda_x)=(\lambda_x)^o$,
$\Phi (\lambda_{x^{-1}})=(\lambda_{x^{-1}})^o$,
$\Phi (\del)=-(\del)^o$, $\Phi (\sigma_1) = q^{-1}(\sigma_{-1})^o$,
and $\Phi (\sigma_{-1}) = q(\sigma_{1})^o$.
\end{proof}

%---------------------------------------------------------------------------
%---------------------------------------------------------------------------
\subsection{The Noetherian property}\label{noeth}
%---------------------------------------------------------------------------
%---------------------------------------------------------------------------
%\DJ{It seems easiest to put the Noetherian stuff together in a section in the first instance. We can discuss %reorganisation later.}
This section addresses the noetherian property for some of the algebras of quantum differential
operators $D_q(R)$ previously considered. We begin with two cases where the invertibility of generators
of $R$ allows a change of generators that yields $D_q(R)$ as a $\K$-algebra that can be constructed from $\K$ by
a combination of polynomial extensions, skew polynomial extensions and skew Laurent extensions.

%---------------------------------------------------------------------------
%---------------------------------------------------------------------------
\subsection{The Noetherian condition and localization}
%---------------------------------------------------------------------------
%---------------------------------------------------------------------------
We have seen in the discussion within Notation~\ref{Dxetal} that $D_q(\K[x^{\pm 1}])$
is right and left Noetherian. This is a localization of $D_q(\K[x])$
 at the powers of a single element, $x$. In this subsection, we discuss the general situation where
$A$ is a domain and $0\neq a\in A$ is such that $\mathcal{A}:=\{a^i:i\geq 0\}$ is a right and left Ore set
and the localization $A_\mathcal{A}$ is known to be right and left Noetherian. We prove a result sufficient to show that $D_q(\K[x])$ is right and left Noetherian provided $q$ is transcendental over $\Q$.

\begin{definition}
Let $A$ be a domain, let $0\neq a\in A$ and let $C$ be a subring of $A$ such that $a\in C$. We say that $a$ is right (resp. left) $C$-seminormal in $A$ if $a$ is normal in $C$, that is $aC=Ca$, and for all $r\in A$ there exists $k\in \N$ such that $ra^k\in aC$ (resp. $a^kr\in Ca$).
\end{definition}
Note that if $a$ is right $C$-seminormal in $A$ then $\{a^i:i\in \N\}$ is a right Ore set in $A$ for, given $r\in A$ and $j\in \N$, there exist $c,c^\prime \in C$ and $k\in \N$ such that $ra^k=ac$ and $ca^{j-1}=a^{j-1}c^\prime$, whence $ra^{k+j-1}=a^jc^\prime$. Similarly, if $a$ is left $C$-seminormal then $\{a^i:i\in \N\}$ is a left Ore set in $A$. If $a$ is both right and left $C$-seminormal then $\{a^i:i\in \N\}$ is an Ore set and the right and left localizations coincide by \cite[Proposition 9.8]{GW}.

\begin{lemma}\label{seminormal}
Let $A$ be a domain, let $0\neq a\in A$ and let $C$ be a subring of $A$ such that $a\in C$ and $a$ is normal in $C$.
Let $D=\{d\in A: da^k\in aC\text{ for some }k\in \N\}$. Then $D$ is a subring of $A$ and, if $A$ is also a $\K$-algebra, $D$ is a subalgebra of $A$. Consequently if $A$ is generated, as a ring or as a $\K$-algebra,  by elements of $D$ then $a$ is right $C$-seminormal in $A$.
\end{lemma}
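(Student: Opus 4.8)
The plan is to show $D$ is closed under addition, multiplication, and — if applicable — scalar multiplication, and to isolate the only nontrivial point, namely closure under multiplication on the left by $a$.

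First I would record the easy closure properties. Each element of $D$ lies in the right ideal-like set $\bigcup_{k\ge 0} (aC)a^{-k}$ of $A_{\mathcal A}$ (where $\mathcal A=\{a^i:i\in\N\}$), but it is cleaner to argue directly in $A$. Given $d_1,d_2\in D$ with $d_1a^{k_1}\in aC$ and $d_2a^{k_2}\in aC$, set $k=\max(k_1,k_2)$; since $a$ is normal in $C$ we have $(aC)a^{j}=aC a^j=a(Ca^j)=a(a^jC)\subseteq aC$ for every $j\ge 0$, so $d_ia^{k}=(d_ia^{k_i})a^{k-k_i}\in aC$, and hence $(d_1+d_2)a^{k}\in aC$, giving $d_1+d_2\in D$. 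Closure under $\K$-scalar multiplication (when $A$ is a $\K$-algebra) is immediate since $aC$ is a $\K$-subspace. Finally $a$ itself and $1$ lie in $D$: $1\cdot a\in aC$ and $a\cdot a = a\cdot a\in aC$; more generally every $c\in C$ lies in $D$ because $ca^0=c$ and $c\cdot a = a c'\in aC$ for some $c'$ by normality of $a$, so in fact $C\subseteq D$, which in particular gives $a\in D$ and a unit $1\in D$.

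The substantive step is multiplicative closure. Let $d_1,d_2\in D$, say $d_1a^{k_1}\in aC$ and $d_2a^{k_2}\in aC$; I must produce $k$ with $d_1d_2a^k\in aC$. Write $d_2a^{k_2}=ac$ with $c\in C$. The obstacle is that $d_1$ absorbs powers of $a$ \emph{on its right}, whereas $d_1(d_2a^{k_2}) = d_1 a c$ has an $a$ sitting between $d_1$ and $c$, not to the right of $d_1$. To fix this, note $d_1a^{k_1}=ac_1$ for some $c_1\in C$, so in the Ore localization $A_{\mathcal A}$ we have $d_1 = ac_1a^{-k_1}$. Then
\[
d_1d_2a^{k_2} = ac_1a^{-k_1}\cdot ac = ac_1a^{-k_1}ac.
\]
Here I invoke normality of $a$ in $C$ once more: $a^{-k_1}\mathcal{}$ conjugates $C$ into $C$, i.e. $a^{-1}Ca = C$ (from $aC=Ca$ one gets $Ca=aC$ hence $a^{-1}Ca=C$ in $A_{\mathcal A}$), so $a^{-k_1}(ac_1)$... rather, more directly: $c_1 a = a c_1'$ for some $c_1'\in C$, and iterating, $c_1 a^{k_1}=a^{k_1}c_1''$ with $c_1''\in C$; also $a^{-k_1}a = a\,a^{-k_1}$ trivially since powers of $a$ commute. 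Thus $d_1d_2a^{k_2} = a(c_1 a^{-k_1+1}c)$, and choosing $k=k_2+k_1$ we get $d_1d_2a^{k} = d_1d_2a^{k_2}\cdot a^{k_1} = a(c_1 a^{-k_1+1}c)a^{k_1} = a c_1 a^{1}c'' \in aC$ after using normality to move the $a^{k_1}$ across $c$ and absorbing, where $c''\in C$; hence $d_1d_2\in D$. I would present this computation carefully rather than hand-wave: the key algebraic facts used are that powers of $a$ commute with each other, that $aC=Ca$ (so $a^{\pm 1}$ normalizes $C$ inside $A_{\mathcal A}$), and that $d_i\in D$ lets us write $d_i a^{k_i}\in aC$.

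Having shown $D$ is a subring (and a $\K$-subalgebra when $A$ is a $\K$-algebra), the final sentence is immediate: if $A$ is generated as a ring (resp. $\K$-algebra) by a set of elements each lying in $D$, then $A=D$, so every $r\in A$ satisfies $ra^k\in aC$ for some $k$, which together with the hypothesis that $a$ is normal in $C$ is exactly the definition of $a$ being right $C$-seminormal in $A$. The main obstacle, as indicated, is the bookkeeping in the multiplicative step — specifically handling the "stray" $a$ between $d_1$ and the $C$-element produced by $d_2$ — and the clean way to dispatch it is to work in the Ore localization $A_{\mathcal A}$ and use that $a$ normalizes $C$ there.
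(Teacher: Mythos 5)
Your closure-under-addition argument and the observation that $C\subseteq D$ are fine, and your final deduction of right $C$-seminormality from $D=A$ is exactly right. The problem is in the multiplicative step: you carry out the key computation ``in the Ore localization $A_{\mathcal A}$,'' writing $d_1=ac_1a^{-k_1}$, and you explicitly recommend this as the clean way to handle the stray $a$. But the hypotheses of the lemma (domain $A$, $a$ normal in a subring $C$) do not guarantee that $\{a^i:i\in\N\}$ is a right Ore set in $A$, so $A_{\mathcal A}$ need not exist; for instance, with $A=\K\langle x,y\rangle$, $a=x$ and $C=\K[x]$, the powers of $x$ fail the right Ore condition. Worse, establishing the Ore condition is precisely what this lemma is for (see the remark immediately after the definition of seminormality), so invoking the localization here is circular. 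As written, the symbol $a^{-k_1}$ has no meaning in this generality.

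The repair is easy and stays entirely inside $A$, and it is in fact what the paper does. With $d_1a^{k_1}=ac_1$ and $d_2a^{k_2}=ac$ (one may assume $k_1\geq 1$, since $d_1a^{k_1}\in aC$ forces $d_1a^{k_1+1}\in aCa=a^2C\subseteq aC$), normality of $a$ in $C$ gives $c\,a^{k_1-1}=a^{k_1-1}c''$ for some $c''\in C$, whence
\[
d_1d_2a^{k_2+k_1-1}=d_1(ac)a^{k_1-1}=d_1a\,a^{k_1-1}c''=(d_1a^{k_1})c''=ac_1c''\in aC .
\]
This is the same telescoping idea you had (feed $d_1$ the power $a^{k_1}$ it needs by pushing a power of $a$ leftward past the $C$-element produced by $d_2$), just executed without negative powers. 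Rewritten this way your argument becomes correct and coincides with the paper's proof.
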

\begin{proof} It suffices to show that $D$ is closed under addition and multiplication.
Let $b,d\in A$ with $ba^j=ab^\prime$ and $da^k=ad^\prime$, where $b^\prime, d^\prime\in C$ and $k\geq j$.
Then $(b+d)a^k=a(d^\prime+b^\prime a^{k-j})\in aC$ so $D$ is closed under addition. Also, by the normality of $a$ in $C$, there exists $d^{\prime\prime}\in C$ such that $d^\prime a^{j-1}=a^{j-1}d^{\prime\prime}$. Hence
\[bda^{k+j-1}=bad^\prime a^{j-1}=ba^jd^{\prime\prime}=ab^\prime d^{\prime\prime}\in aC.\]
Thus $D$ is closed under multiplication and is a subring of $A$.
\end{proof}

\begin{example}
Let $T$ be a commutative domain with a derivation $\delta$, let $A$ be the Ore extension or skew polynomial ring $T[t;\delta]$ and let $a\in T\backslash{0}$. Let $C$ be the subring of $A$ generated by $T$ and $at$. Here $a$ is normal in $C$, with
$(at)a=a(at+\delta(a))$. Also $t\in D$, as $ta^2=a(at+2\delta(a))$. Thus $a$ is right $C$-seminormal in $A$. The fact that $\{a^i:i\in \N\}$ is a right Ore set in $A$ in this case is well-known and is a special case of \cite[1.4]{G}. A particular instance, instructive for quantum differential operators, is that of the Weyl algebra $\K[x,\del:\del x-x\del=1]=\K[x][\del;d/dx]$ with $a=x$ and with
$C$ generated by $x$ and $z:=x\del$.  Here $zx=x(z+2)$ and $C$ is the Ore extension
$\K[z][x;\alpha]$  where $\alpha$ is the $\K$-automorphism of $\K[z]$ such that $\alpha(z)=z-2$.
\end{example}

\begin{example}\label{DqKx}
We have seen that $D_q(K[x])$ is generated, as a $\K$-algebra, by
$x$, $\del$, $\del_1$ and $\del_{-1}.$ The subalgebra $T$ generated by $\tau:=x\del, x\del_1$ and $x\del_{-1}$, or, equivalently, by $\tau$, $\sigma:=(q-1)x\del_1+1$ and $\sigma^{-1}=(q^{-1}-1)x\del_{-1}+1$, is commutative and $x$ is normal in the subalgebra $C$ generated by $T$ and $x$, where $x\tau=(\tau-1)x$ and
$x\sigma=q^{-1}\sigma x$. Also $\del x^2=x(x\del+2)$, $\del_1x^2=x(q^2x\del_1+q+1)$ and $\del_{-1} x^2=x(q^{-2}x\del_{-1}+q^{-1}+1)$ so $D$ contains all four generators of $D_q(K[x])$ and,
by Lemma~\ref{seminormal}, $x$ is right $C$-seminormal. Similar calculations show that $x$ is left $C$-seminormal.
Thus $\{x^i:i\in \N\}$ is an Ore set in $D_q(K[x])$. As $D_q(K[x,x^{-1}])$ is generated by $D_q(K[x])$ and $x^{-1}$, the (right and left) localization of $D_q(K[x])$ at $\{x^i:i\in \N\}$ is
$D_q(K[x,x^{-1}])$.
\end{example}

In Theorem \ref{RanoethRnoeth} below we show that, under some extra conditions satisfied in Example~\ref{DqKx}, if the localization of $A$ at the powers of a seminormal element is Noetherian then so is $A$. This depends on the following ungraded version of a result of Rogalski \cite{R} on $\N$-graded rings.

\begin{theorem}[Rogalski]\label{rogalski}
Let $R\subseteq S$ be rings such that $R$ is a
left Ore domain and $S$ is left Noetherian. Then $R$
is left Noetherian if, for all nonzero elements $r$
of $R$, the left $R$-modules \[\frac{Sr\cap R}{Rr}\text{ and }
 \frac{S}{R+Sr}\] are Noetherian.
\end{theorem}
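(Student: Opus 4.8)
The plan is to show that every left ideal of $R$ is finitely generated. Let $I$ be a nonzero left ideal of $R$. Since $S$ is left Noetherian, the left ideal $SI$ of $S$ is finitely generated, and we may take the generators inside $I$, say $SI=Ss_1+\cdots+Ss_n$ with $s_i\in I$. Put $I_0=Rs_1+\cdots+Rs_n\subseteq I$, a finitely generated left ideal of $R$ with $SI_0=SI$. Since $I_0\subseteq I\subseteq SI\cap R=SI_0\cap R$, it suffices to prove that the left $R$-module $(SI_0\cap R)/I_0$ is Noetherian: then its submodule $I/I_0$ is finitely generated, and as $I_0$ is finitely generated so is $I$. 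Note that the hypothesis that $R$ is a \emph{left Ore domain} is what makes such reductions well behaved, since regular elements abound and nonzero left ideals are large.

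To prove that $(SI_0\cap R)/I_0$ is Noetherian, I would induct on the number $n$ of generators of $I_0$, the case $n=0$ being trivial. In the inductive step choose a nonzero generator $r$, write $I_0=Rr+J_0$ with $J_0$ generated by the remaining $n-1$ elements (so $SI_0=Sr+SJ_0$), and interpose left ideals $I_0\subseteq I_0+(Sr\cap R)\subseteq SI_0\cap R$, refined by a short chain that separates the contribution of $r$ from that of $J_0$. The bottom factor $\bigl(I_0+(Sr\cap R)\bigr)/I_0$ is a quotient of $(Sr\cap R)/Rr$, hence Noetherian by the first hypothesis; the part coming from $J_0$ is handled by the inductive hypothesis applied to $(SJ_0\cap R)/J_0$; and the remaining factor is where the second hypothesis enters. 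For that factor one embeds a suitable subquotient of it into $SI_0/Sr$, which sits in a short exact sequence of left $R$-modules $0\to (SI_0\cap R)/(Sr\cap R)\to SI_0/Sr\to (SI_0+R)/(R+Sr)\to 0$ whose right-hand term embeds in $S/(R+Sr)$ and is therefore Noetherian by the second hypothesis; combined with the inductive identification of the left-hand term this gives the claim, and hence the theorem. This is exactly Rogalski's argument in \cite{R} with the grading erased.

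I expect the main obstacle to be the bookkeeping in that last step: making precise that the ``$r$-part'' of $(SI_0\cap R)/I_0$ is squeezed between finitely many copies of $(Sr\cap R)/Rr$ and of $S/(R+Sr)$. This is where one genuinely uses that $S$ is left Noetherian — to bound the number of $S$-module generators involved, hence the number of copies of $S/(R+Sr)$ that can occur — and that $R$ is a domain, so that left multiplication by the $s_i$ and right multiplication by $r$ are injective and the various intersection quotients compare correctly. An alternative would be to deduce the statement directly from Rogalski's graded theorem by passing to the $\N$-graded rings $R[t]\subseteq S[t]$, with $t$ a central indeterminate of degree $1$, using that $R$ is left Noetherian if and only if $R[t]$ is; but matching the stated hypotheses for $R\subseteq S$ with the graded hypotheses for $R[t]\subseteq S[t]$ degree by degree requires some care, so I would expect the direct adaptation of \cite{R} to be the cleanest route.
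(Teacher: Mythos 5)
Your global strategy is the right one and is essentially what the paper has in mind (the paper simply cites Rogalski's Lemmas 5.9 and 5.10 and says to erase the grading): reduce to showing that $(SI_0\cap R)/I_0$ is Noetherian for a finitely generated $I_0=Rs_1+\dots+Rs_n$ with $SI_0=SI$, and induct on $n$, peeling off one generator $r=s_n$ from $I_0=J_0+Rr$. The bottom layers of your filtration are fine: $(I_0+(Sr\cap R))/I_0$ is a quotient of $(Sr\cap R)/Rr$, and the contribution of $J_0$ is a quotient of $(SJ_0\cap R)/J_0$, handled by induction. The gap is in the remaining factor, which after these reductions is a quotient of $(SI_0\cap R)/\bigl((SJ_0\cap R)+Rr\bigr)$. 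Your short exact sequence $0\to (SI_0\cap R)/(Sr\cap R)\to SI_0/Sr\to (SI_0+R)/(R+Sr)\to 0$ is correct as stated, but it is the wrong tool: the module you still need to control is a quotient of the \emph{left-hand} term, and Noetherianity of the right-hand term says nothing about a submodule of the middle. The phrase ``combined with the inductive identification of the left-hand term'' conceals the whole difficulty: $(SI_0\cap R)/(Sr\cap R)$ is not a subquotient of $(SJ_0\cap R)/J_0$ (note $J_0\not\subseteq Sr$), so the inductive hypothesis does not identify it, and the argument becomes circular. A symptom of the problem is that you never actually invoke the left Ore hypothesis, whereas the paper explicitly flags it as the one extra property Rogalski's Lemma 5.10 requires.

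Here is how that step actually closes, and where Ore enters. Since $(SJ_0+Rr)\cap R=(SJ_0\cap R)+Rr$, the factor $(SI_0\cap R)/\bigl((SJ_0\cap R)+Rr\bigr)$ embeds into $(SJ_0+Sr)/(SJ_0+Rr)$. Writing an element of $SI_0\cap R$ as $u+vr$ with $u\in SJ_0$ and $v\in S$, its image is $\overline{vr}$ with $vr\in R+SJ_0$; hence the image of the factor lies in $\{\overline{vr}:v\in T\}\cong T/(T\cap K)$, where $T=\{v\in S: vr\in R+SJ_0\}$ and $K=\{v\in S: vr\in SJ_0+Rr\}\supseteq R$. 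Now choose a nonzero generator $s$ of $J_0$ and use the left Ore condition in $R$ to produce $0\neq a,b\in R$ with $ar=bs$; then $Sar\subseteq Ss\subseteq SJ_0$, so $Sa\subseteq K$ and $R+Sa\subseteq K$. Consequently $T/(T\cap K)$ is a quotient of $T/\bigl(T\cap(R+Sa)\bigr)$, which embeds in $S/(R+Sa)$ and is Noetherian by the second hypothesis --- applied to the auxiliary element $a$ manufactured by the Ore condition, not to $r$ itself. Replacing your last step by this argument completes the proof; the rest of your write-up can stand.
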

\begin{proof}
See \cite[Lemmas 5.9 and 5.10]{R}. These lemmas appear in \cite{R} in the context of  $\N$-graded rings $R$ and $S$.
Lemma 5.9 is a general result in the graded context and although Lemma 5.10 is explicitly about particular
rings $R$ and $S$ the only extra property that is used in the proof is the left Ore property for $R$. For the non-graded version, view
$R$ as $\N$-graded with $R=R_0$. In other, more informal, words ignore the references to homogeneity and shift.
\end{proof}

\begin{theorem}\label{RanoethRnoeth}
Let $a$ be a right and left $C$-seminormal element of a domain $A$ and let $\gamma$ be the automorphism of $C$ such that $ca=a\gamma(c)$ for all $c\in C$. Suppose  that the left module $A/Aa$ is simple, that $\cap_{i\geq 0}a^iA=0$ and that, for each $c\in C\backslash Ca$, there are only finitely many positive integers $j$ such that $\gamma^j(c)\in Aa\cap C$. If the localization $B$ of
$A$ at $\{a^i:i\in \N\}$ is left Noetherian then $A$ is left Noetherian.
\end{theorem}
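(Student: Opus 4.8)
The plan is to apply Rogalski's Theorem~\ref{rogalski} with $R=A$ and $S=B$. The standing hypotheses are met: $A$ is a domain by assumption, and $A$ is a left Ore domain, since it embeds in the left Noetherian (hence left Ore) domain $B$ and, $B$ being the localization of $A$ at $\{a^{i}\}$, any equation $b_{1}r_{1}=b_{2}r_{2}$ in $B$ with $0\ne r_{1},r_{2}\in A$ can be cleared of denominators (multiply on the left by a sufficiently high power of $a$) to produce a nonzero common left multiple inside $A$; equally, $a$ being left $C$-seminormal makes $\{a^{i}\}$ a left Ore set in $A$, and the same clearing argument then yields the left Ore condition for $A$ itself. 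So it remains to prove, for every nonzero $r\in A$, that the left $A$-modules $(Br\cap A)/Ar$ and $B/(A+Br)$ are Noetherian.

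The engine of the proof is a description of $B/A$ as a left $A$-module. Because $\{a^{i}\}$ is a right Ore set, every element of $B$ is a right fraction $da^{-n}$ with $d\in A$, so $B=\bigcup_{n\geq 0}Aa^{-n}$, an ascending union of left $A$-submodules: $Aa^{-n}=A\cdot a^{-n}$ is visibly a left $A$-module, it contains $A$, and $Aa^{-n}=(Aa)a^{-(n+1)}\subseteq Aa^{-(n+1)}$. Right multiplication by $a^{n+1}$ is a left $A$-module isomorphism of $Aa^{-(n+1)}$ onto $A$ carrying $Aa^{-n}$ onto $Aa$, so $Aa^{-(n+1)}/Aa^{-n}\cong A/Aa$. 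Since $A/Aa$ is simple, $B/A$ is a directed union of the finite-length submodules $Aa^{-n}/A$; hence every finitely generated $A$-submodule of $B/A$ has finite length, and a subquotient of $B/A$ is Noetherian if and only if it has finite length, if and only if it lies inside the image of some $Aa^{-n}/A$. Both Rogalski modules are of this type: the map $B\to Br$, $b\mapsto br$, is a left $A$-module isomorphism (as $A$ is a domain and $r\ne 0$) sending $A$ onto $Ar$, so $(Br\cap A)/Ar\cong\{b\in B:br\in A\}/A$, a submodule of $B/A$; and $B/(A+Br)$ is a quotient of $B/A$.

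It therefore suffices to establish two boundedness statements. For the quotient $B/(A+Br)$: writing $\overline{M_{n}}$ for the image of $M_{n}:=Aa^{-n}/A$, one checks that $\overline{M_{n+1}}/\overline{M_{n}}\cong A/\bigl(Aa+(A\cap Bra^{n+1})\bigr)$, which by simplicity of $A/Aa$ is $A/Aa$ or $0$; so $B/(A+Br)$ has finite length as soon as all but finitely many of these layers vanish. Reducing via right seminormality to the case $r\in C\setminus Ca$ and translating the condition $A\cap Bra^{n+1}\subseteq Aa$ through the identity $a^{-j}c\,a^{j}=\gamma^{j}(c)$ for $c\in C$, the nonvanishing of the $j$-th layer is controlled by whether $\gamma^{j}(c)\in Aa\cap C$; the hypothesis that this occurs for only finitely many $j$ then gives the required finite length. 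For the submodule $(Br\cap A)/Ar\cong\{b\in B:br\in A\}/A$: one must show $\{b\in B:br\in A\}\subseteq Aa^{-n}$ for some $n=n(r)$, i.e. the powers of $a^{-1}$ needed to express such $b$ as a right fraction are uniformly bounded; here $\cap_{i\geq 0}a^{i}A=0$ ensures the relevant $a$-adic orders are finite, and the seminormality again supplies the mechanism for pushing elements of $A$ past powers of $a$ to extract the bound.

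The main obstacle is precisely this last step, the denominator bookkeeping in the two boundedness statements. Because $a$ is only $C$-seminormal --- normal in the subring $C$, but not in $A$ --- and $\{a^{i}\}$ is controlled one side at a time, moving a general element of $A$ across a power of $a$ forces repeated use of the seminormality, producing exponents with no manifest uniform control; the real content of the theorem is that the two finiteness hypotheses ($\cap_{i}a^{i}A=0$ and the finiteness of $\{j:\gamma^{j}(c)\in Aa\cap C\}$) are exactly strong enough to turn this into a genuine uniform bound --- on $n$ for the submodule and on the number of nonzero layers for the quotient. The Rogalski reduction and the decomposition of $B/A$ with simple layers $A/Aa$ are, by contrast, formal.
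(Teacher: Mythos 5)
Your overall plan is the paper's plan: invoke Rogalski's Theorem~\ref{rogalski} with $R=A$, $S=B$, verify the left Ore condition for $A$ by clearing denominators, filter $B/A$ by the submodules $Aa^{-n}/A$ whose successive layers are copies of the simple module $A/Aa$, and reduce both Rogalski modules to showing that the resulting ascending chains stabilize. That part is correct and matches the paper. But the proof stops exactly where the theorem's content begins: you state that the two finiteness hypotheses "are exactly strong enough" to bound the chains, and defer the "denominator bookkeeping" without carrying it out. That is a genuine gap, and the missing argument is not a routine computation with seminormality; it hinges on a second, separate use of the simplicity of $A/Aa$, namely the maximality of $Aa$ as a left ideal, which your outline never invokes.

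Concretely, three steps are absent. First, the production of the element of $C\setminus Ca$ to which the finiteness hypothesis applies: using $\cap_{i}a^{i}A=0$ one may assume $r\notin aA$, and then taking $k$ minimal with $ra^{k}=ac$, $c\in C$, forces $c\notin Ca$ (otherwise $k$ was not minimal). Second, for $(Br\cap A)/Ar$: if $s=ta^{-(j+1)}r\in A$ then $sa^{k}=ta^{-(j+1)}ac=t\gamma^{j}(c)a^{-j}$, so $t\gamma^{j}(c)\in Aa^{j+k}\subseteq Aa$; for $j$ large enough that $\gamma^{j}(c)\notin Aa$, the left ideal $I=\{x\in A:x\gamma^{j}(c)\in Aa\}$ is a proper left ideal containing $Aa$, hence equals $Aa$ by maximality, so $t\in Aa$ and $s\in Aa^{-j}r\cap A$ --- this is what terminates the first chain. (Your attribution of this step to "seminormality pushing elements past powers of $a$" misses that the decisive tool is maximality of $Aa$, not seminormality.) Third, for $B/(A+Br)$: again by maximality, $\gamma^{j}(c)\notin Aa$ gives $1=u\gamma^{j}(c)+va$, and combining this with $\gamma^{j}(c)a^{-(k+j)}=a^{-(j+1)}r$ yields $a^{-(k+j)}=ua^{-(j+1)}r+va^{-(k+j-1)}\in Br+Aa^{-(k+j-1)}$, which kills all but finitely many layers. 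Your layer formula $\overline{M_{n+1}}/\overline{M_{n}}\cong A/(Aa+(A\cap Bra^{n+1}))$ is correct, but asserting that its vanishing "is controlled by whether $\gamma^{j}(c)\in Aa\cap C$" is precisely the implication that the displayed comaximality argument supplies and that your write-up omits. Also note a small hazard in your reduction "to the case $r\in C\setminus Ca$": this is harmless for $B/(A+Br)$ since $Br=Bc$, but it is not a valid reduction for $(Br\cap A)/Ar$ because $Ar\neq Ac$ in general; the paper avoids it by working with the chain $Aa^{-j}r\cap A$ directly and introducing $c$ only through the identity above.
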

\begin{proof}
We show that $A$ is left Noetherian using Theorem~\ref{rogalski} with $R=A$ and $S=B$. Let $r,x\in A$. As $B$ is a left Noetherian domain it is left Ore so there exist $c,d \in A$ and $m,n\geq 0$ such that
$a^{-n}cx=a^{-m}dr$. Premultiplying by $a^t$, where $t=\max\{m,n\}$, we see that $c_1x=d_1r$ for some $c_1,d_1\in A$. Thus $A$ is left Ore.

Suppose that $r\neq 0$. Consider the ascending chain
\[Ar\subseteq Aa^{-1}r\cap A\subseteq\ldots\subseteq Aa^{-j}r\cap A\subseteq Aa^{-(j+1)}r\cap A\subseteq \ldots.\eqno{(*)}\]
The union of this chain is $Br\cap A$. Also
\[\frac{Aa^{-(j+1)}r\cap A}{Ar}\hookrightarrow \frac{Aa^{-(j+1)}r}{Ar}\simeq \frac{Aa^{-(j+1)}}{A}\simeq\frac{A}{Aa^{j+1}}\]
so $\frac{Aa^{-(j+1)}r\cap A}{Ar}$ has finite length and is a Noetherian left module. To show that
$\frac{Br\cap A}{Ar}$ is Noetherian, it will suffice to show that $(*)$ terminates.

As $\cap_{i\geq 1}a^iA=0$ there exists $j\geq 0$ such that $r=a^ju\in a^jA$, where $u\in A\backslash aA$. Then $Br=Bu$ and $Aa^{-(j+t)}r\cap A=Aa^{-t}u\cap A$ for $t\geq 0$ so deleting the first $j$ terms in the chain $(*)$ gives the corresponding chain for $u$. Thus we may assume that
$r\notin aA$.
Let $k\geq 0$ be minimal such that $ra^k\in aC$ and let $c\in C$ be such that $ra^k=ac$. Note that $c\notin Ca$ otherwise $ra^{k-1}\in aC$, contradicting the minimality of $k$ unless $k=0$ and $r\in aC\subseteq aA$.

By hypothesis, there exists $N\in \N$ such that, for all $j\geq N$, $\gamma^j(c)\notin Aa$. Let $j\geq N$ and let $s=ta^{-(j+1)}r\in Aa^{-(j+1)}r\cap A$, where $t\in A$.
Then
\[sa^k=ta^{-(j+1)}ra^k=ta^{-(j+1)}ac=ta^{-j}c=t\gamma^j(c)a^{-j},\]
whence
$t\gamma^{j}(c)a^{-(j+k)}=s\in A$ and $t\gamma^{j}(c)\in Aa^{j+k}\subseteq Aa$.

Let $I$ be the left ideal $\{x\in A: x\gamma^{j}(c)\in Aa\}$. Thus $t\in I$. As $a\gamma^j(c)=\gamma^{j-1}(c)a\in Aa$, we have
$Aa\subseteq I$. However $1\notin I$ so, by the maximality of $Aa$, $I=Aa$. Thus $t=ua$, for some $u\in A$, and
$s=ua^{-j}r\in Aa^{-j}r\cap A$. It follows that $(*)$ terminates and that $\frac{Br\cap A}{Ar}$ is Noetherian.

Now consider the ascending chain
\[A+Br\subseteq Aa^{-1}+Br\subseteq \ldots \subseteq Aa^{-j}+Br\subseteq Aa^{-(j+1)}+Br\subseteq \ldots,\eqno{(**)}\]
the union of which is $B$. Also
\[\frac{Aa^{-j}}{A}\twoheadrightarrow\frac{Aa^{-j}+Br}{A+Br}\]
so $\frac{Aa^{-j}+Br}{A+Br}$ has finite length. To show that
$\frac{B}{A+Br}$ is Noetherian, it suffices to show that $(**)$ terminates.

Let $k$, $c$, $N$ and $j$ be as above, so that $\gamma^j(c)\notin Aa$ and $\gamma^j(c)a^{-(k+j)}=a^{-(j+1)}r$.
By the maximality of $Aa$, there exist $u,v\in A$ such that $1=u\gamma^j(c)+va$. Then
\begin{eqnarray*}
a^{-(k+j)}&=&u\gamma^j(c)a^{-(k+j)}+va^{-(k+j-1)}\\
&=&ua^{-(j+1)}r+va^{-(k+j-1)}\\
&\in& Br+Aa^{-(k+j-1)}.
\end{eqnarray*}
Thus $(**)$ terminates and, by Theorem~\ref{rogalski}, $B$ is left Noetherian.
\end{proof}

We now adapt methods from \cite{J2} to show that $x$ and $D=D_q(\K[x])$ satisfy the first of the extra hypotheses required for  Theorem~\ref{RanoethRnoeth} to be applicable.
\begin{proposition}\label{DDxsimple}
Let $D=D_q(\K[x])$. Then the left module $D/Dx$ is simple.
\end{proposition}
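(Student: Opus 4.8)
The plan is to show that for any nonzero left ideal $I$ of $D$ not contained in $Dx$, we have $I=D$; equivalently, that $Dx$ is a maximal left ideal. Since $D$ is $\Z$-graded with $x$ homogeneous of degree $1$, the left module $D/Dx$ inherits a grading, and it is natural to first understand it degree by degree. Using the basis $\mathcal{D}$ from the previous theorem, every element of $D$ is uniquely a $\K$-combination of monomials from $\mathcal{G}$ (the degree $\geq 0$ part, lying in $G$) and monomials $x^j\del_1^k\del^\ell$ or $x^j\del_{-1}^k\del^\ell$ with $j>0$ (which lie in $xD$, hence in $Dx$ after rewriting). So as a $\K$-vector space $D/Dx$ is spanned by the images of the monomials in $\mathcal{G}$, i.e.\ by the image of $G$; in fact $D=Dx\oplus G$ is not quite right because $G$ is not a complement of $Dx$ inside $D$ as a left $D$-module, but degree-wise the image of $G_n$ spans the degree-$(-n)$ part (after the appropriate sign/shift convention) of $D/Dx$. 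The key structural input is the identification in Lemma~\ref{TSP}: the map $\theta:G\hookrightarrow S\twoheadrightarrow S/x^{-1}S\simeq P/dP$ has image a domain, and one wants to transport the problem of simplicity of $D/Dx$ to a statement about $P/dP$ or about the Weyl-type algebra $A$ inside $D_x$.

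The cleanest route I would take is to exploit the isomorphism $D_x\simeq A_x[X^{\pm 1};\alpha]$ from Notation~\ref{Dxetal}, where $A$ is the first Weyl algebra generated by $x$ and $\del$. In $A$, the left module $A/Ax$ is simple (it is the Weyl module $\K[\del]$ with $x$ acting as $-d/d\del$, or directly: $A/Ax\cong \K[\del]$ as a left $A$-module, which is well known to be simple). One then needs to lift simplicity from $A/Ax$ first to $D_x/D_xx$ — but $x$ is a unit in $D_x$, so $D_x/D_xx=0$, which is the wrong object. So instead I would work inside $S$ or directly with $D$. The better approach: adapt the argument of \cite{J2} (as the sentence preceding the proposition signals). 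There one shows $G/(Gx\cap G)$-type quotients are controlled; here, since $Dx\cap G$ is a two-sided-ish object, I would show (i) $D/Dx$ is generated as a left $D$-module by the image $\bar 1$ of $1$, with $D_n$ mapping onto the degree-$(-n)$ piece, so any nonzero graded submodule contains some homogeneous element which we may reduce; (ii) using the relations \eqref{gens-rels1}, namely $\del_a x - q^a x\del_a = 1$, the operators $\del, \del_1, \del_{-1}$ act on $D/Dx$ and, crucially, on the image of $G$, as they act on $\K[x]$-like modules; (iii) by the action on $\K[x]$ itself: $D$ acts on $\K[x]$ with $\K\cdot 1$ as the only proper submodule issues handled by the fact that $x$ kills $1$ there, so $D/\ann_D(\overline{1\in\K[x]})$ ... — actually the slickest finish is to observe that $\K[x]/x\K[x]\cong\K$ is the simple $D^0_q$-ish module, and to use that $D x$ is exactly the annihilator-type left ideal, then count dimensions via the Hilbert series of $G$ to force simplicity.

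The main obstacle, and where I would spend the most care, is step (ii)–(iii): proving that $Dx$ is actually \emph{maximal}, not merely that $D/Dx$ has a simple head. Concretely, given a homogeneous element $g\in G_n$ whose image in $D/Dx$ is nonzero, I must produce elements of $D$ moving $\bar g$ back up to $\bar 1$ — this requires showing that multiplication by suitable words in $x,\del_1,\del_{-1},\del$ acts transitively (up to scalars) on the graded pieces of $D/Dx$, which is exactly where the quantized Weyl relations and the $q$-transcendence hypothesis enter (the $q$-numbers $\qnum{n}{q^a}$ must be nonzero, guaranteed since $q$ is transcendental over $\Q$). I expect the argument will mirror, almost line for line, the proof in \cite{J2} that the analogous module for the algebra generated by two Eulerian derivatives is simple, with $D/Dx$ playing the role of the natural module and the basis $\mathcal{D}$ controlling dimensions; the genuinely new bookkeeping is handling the extra generator $\del_{-1}$ and the non-standard special monomials $\del_1^j\del\del_1^k\del^\ell$ in $\mathcal{G}$, which is managed using \eqref{5termquad}, \eqref{ddd} and \eqref{ddqdq-1} exactly as in Lemma~\ref{spanner}.
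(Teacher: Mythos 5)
There is a genuine gap here, and also a false structural claim. First, the claim that the monomials $x^j\del_1^k\del^\ell$ with $j>0$ ``lie in $xD$, hence in $Dx$ after rewriting'' is wrong: $x$ is not normal in $D$, and $xD\neq Dx$. For instance, from \eqref{gens-rels1} one gets $x\del_1=q^{-1}(\del_1x-1)\equiv -q^{-1}\bmod Dx$, so $x\del_1\notin Dx$. Consequently $D/Dx$ is \emph{not} spanned by the image of $G$; the image of $G$ collapses drastically (e.g.\ $\del-q\del_1=(\del\del_1-q\del_1\del)x\in Dx$ forces $\del_1\equiv q^{-1}\del$), and the correct picture is that $D/Dx$ is one-dimensional in each degree, being isomorphic to $\K[\del]$. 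Any strategy that tries to ``count dimensions via the Hilbert series of $G$'' is therefore aimed at the wrong object from the start.

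Second, and more seriously, the one step you identify as the crux --- showing that every element of $D$ can be moved back to $\overline{1}$ modulo $Dx$, i.e.\ that $Dx$ is maximal --- is never carried out; you defer it to an expectation that the argument of \cite{J2} will transfer. The actual mechanism is a concrete congruence: one proves by induction, using \eqref{ddx} and \eqref{dnxcomm}, that $\qnum{j}{q}\del^j\equiv jq^j\del_1\del^{j-1}\bmod Dx$ (and its analogue for $\del_{-1}$), which shows that left multiplication by any generator preserves $Dx+\K[\del]$ and hence that $D=Dx+\K[\del]$. Once that is in hand, $D/Dx$ is identified with $\K[\del]$ as a module over the Weyl subalgebra generated by $x$ and $\del$ (with $-x$ acting as $d/d\del$), and simplicity follows from the classical simplicity of that module; the transcendence of $q$ enters only to guarantee $\qnum{j}{q}\neq 0$. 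Without establishing this reduction --- or some substitute for it --- the proposal does not prove the proposition.
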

\begin{proof}
We aim to show that $D=Dx+\K[\del]$.
We claim that, for $j\geq1$,
\begin{equation}\label{claim}
\qnum{j}{q}\del^j\equiv jq^j\del_1\del^{j-1}\bmod{Dx},
\end{equation}
where $\qnum{j}{q}=\frac{q^j-1}{q-1}$.
By \eqref{ddx} with $a=0$ and $b=1$,
\begin{equation}\label{ddqx}
\del-q\del_1=(\del\del_1-q\del_1\del)x,
\end{equation}
which establishes \eqref{claim} when $j=1$. Let $j>1$ and suppose that
\eqref{claim} holds for $j-1$.
Multiplying throughout \eqref{ddqx}, on the left, by $\del^{j-1}$,
\begin{equation}\label{leftddqx}
\del^j\equiv q\del^{j-1}\del_1\bmod{Dx}.
\end{equation}
Multiplying throughout \eqref{ddqx}, on the right, by $\del^{j-1}$,
and using \eqref{dnxcomm} with $a=0$ and $n=j-1$,
\begin{eqnarray*}
\del^j-q\del_1\del^{j-1}&=&(\del\del_1-q\del_1\del)(\del^{j-1}x-(j-1)\del^{j-2})\\
&\equiv&(1-j)(\del\del_1-q\del_1\del)\del^{j-2}\bmod{Dx}\\
&\equiv&((1-j)\del\del_1\del^{j-2}+q(j-1)\del_1\del^{j-1})\bmod{Dx}\\
&\equiv&(-q^{1-j}\qnum{j-1}{q}\del^j+q(j-1)\del_1\del^{j-1})\bmod{Dx}.
\end{eqnarray*}
Simplifying, we see that $\qnum{j}\del^j=(q^{j-1}+\qnum{j-1}{q})\del^j\equiv jq^j\del_1\del^{j-1}\bmod{Dx}$,
that is \eqref{claim} holds for $j$.

To show that $D=Dx+\K[\del]$, let $m$ be a monomial of length $\ell$ in the generators $x, \del, \del_1$ and
$\del_{-1}$. It is enough to show that $m\in Dx+\K[\del]$. This is certainly true if $\ell=1$ so suppose it is true for monomials of length $\ell-1$ and let $m=gm^\prime$ where $m^\prime$ is a monomial of length $\ell-1$ and $g\in\{x,\del,\del_1,\del_{-1}\}$. By \eqref{claim} and its analogue for $\del_{-1}$, if $g\in
\{\del_1,\del_{-1}\}$ then $g\K[\del]\subseteq Dx+\K[\del]$. The same is true if $g=\del$ and, as  $x\del^{j}=\del^{j}x-j\del^{j-1}$, if $g=x$. Hence
\[m=gm^\prime\in g(Dx+\K[\del])\subseteq Dx+g\K[\del]\subseteq Dx+\K[\del].\]
The left module $\K[\del]$ over the first Weyl algebra $A_1=\K[x,\del:\del x-x\del=1]$, with $-x$ acting by differentiation with respect to $\del$ and $\del$ by left multiplication, is well known to be simple. Therefore $D/Dx$ is simple as a left $D$-module and may be identified with $\K[\del]$.
\end{proof}

\begin{remark}\label{cap}
Analogous methods, with \eqref{ddqx} replaced by
\[\del-\del_1=x(\del\del_1-q\del_1\del)x,\]
show that the right module $D/xD$ is also simple and can be identified with $\K[\del]$. For all $f(\del)\in \K[\del]$,
$f(\del)x^{d+1}=0$, where $d$ is the degree of $f(\del)$, so if $u\in \cap_{i\geq 1}x^iD$ then $u\in \ann_D\K[\del]$.
But $D$ is simple so $\ann_D\K[\del]=0$ and hence $\cap_{i\geq 1}x^iD=0$, the second hypothesis in Theorem~\ref{RanoethRnoeth}.
\end{remark}

\begin{proposition}\label{criticallyclosed}
With the notation from Example~\ref{DqKx}, let $0\neq c\in C\backslash xC$ and suppose that $q$ is transendental over $\Q$. Then $\gamma^j(c)\in Dx\cap C$ for only finitely many positive integers $j$.
\end{proposition}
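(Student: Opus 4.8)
The strategy is to make the condition "$\gamma^j(c)\in Dx\cap C$" explicit in coordinates on $C$, and then to read it off as a statement about which powers of $q$ a certain Laurent polynomial can divide, where transcendence of $q$ prevents infinitely many coincidences. Recall from Example~\ref{DqKx} that $C$ is generated by $T=\K[\tau,\sigma,\sigma^{-1}]$ (commutative) together with $x$, with relations $x\tau=(\tau-1)x$ and $x\sigma=q^{-1}\sigma x$; so $C$ is an iterated skew extension and every element of $C$ has a normal form $\sum_{i\geq 0} x^i f_i(\tau,\sigma,\sigma^{-1})$ with $f_i\in T$. The automorphism $\gamma$ of $C$ with $cx=x\gamma(c)$ acts by $\gamma(\tau)=\tau-1$, $\gamma(\sigma)=q^{-1}\sigma$, and $\gamma(x)=x$ (reading off from the above relations), so on the constant term $f_0$ it acts by $\gamma^j(f_0)(\tau,\sigma)=f_0(\tau-j,\,q^{-j}\sigma)$. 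The first step is therefore: write $c=f_0+xc'$ with $f_0\in T$, $c'\in C$; since $xc'\in Dx\cap C$ automatically (indeed $xc'\in xC$), the condition $\gamma^j(c)\in Dx\cap C$ reduces, after subtracting $x\gamma^j(c')\in xC\subseteq Dx$, to $\gamma^j(f_0)\in Dx\cap C$. Note $f_0\neq 0$: if $f_0=0$ then $c\in xC$, contrary to hypothesis.

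The second step is to identify $Dx\cap T$, or rather to show that a nonzero element $g(\tau,\sigma,\sigma^{-1})\in T$ lies in $Dx$ only under a controllable constraint. Here I would use the decomposition $D=\bigoplus_{n\geq 1}x^nW\oplus G$ from the Theorem (the basis $\mathcal D$) together with the $\Z$-grading of $D$ by degree in $x$. An element of $T$ is a polynomial in $\tau=x\del$, $\sigma=1+(q-1)x\del_1$, $\sigma^{-1}$, each of which has $x$-degree $0$; so $g$ lies in the degree-$0$ component $D_0$. Meanwhile $Dx\cap D_0 = (D_{-1})x$. Using the action on $\K[x]$: every element of $D$ acts on $\K[x]$, the degree-$0$ part $D_0$ acts diagonally (each of $\tau,\sigma,\sigma^{-1}$ sends $x^n$ to a scalar times $x^n$), and $Dx$ kills the constant $1$ (since $x\cdot 1=x$ and then... more carefully: $Dx.\,\K\cdot 1 = D.(x) \subseteq D.\K[x]_{\geq 1}$, so $(Dx)(1)$ has no constant term, i.e. is in $x\K[x]$). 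Hence if $g\in T\cap Dx$ then $g$, acting diagonally, must annihilate $1$. The eigenvalue of $g$ on $x^n$ is $g(n,\,q^n,\,q^{-n})$ — writing $g$ as a Laurent polynomial $g(\tau,\sigma,\sigma^{-1})$ and substituting $\tau\mapsto n$, $\sigma\mapsto q^n$ — so $g\in T\cap Dx$ forces $g(0,1,1)=0$, and more: since this must hold as an identity forced by membership in $Dx$, one gets that $g$ lies in $T\cap Dx$ iff the eigenvalue function $n\mapsto g(n,q^n,q^{-n})$ vanishes at $n=0$; I expect in fact $T\cap Dx\cap D_0$ to be exactly $\{g\in T: g(0,1,1)=0\}$ but only one direction is needed.

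The third and decisive step: apply this to $g=\gamma^j(f_0)$. Its eigenvalue on $x^n$ is $f_0(n-j,\,q^{n-j},\,q^{-(n-j)})$, so $\gamma^j(f_0)\in Dx$ forces the value at $n=0$, namely $f_0(-j,\,q^{-j},\,q^{j})=0$, to vanish. Now $f_0$ is a fixed nonzero Laurent polynomial in three variables; consider the single-variable Laurent-polynomial-in-$q^j$-and-polynomial-in-$j$ function $\Phi(j):=f_0(-j,\,q^{-j},\,q^{j})$. Because $q$ is transcendental over $\Q$, I claim $\Phi$ can have only finitely many integer zeros: clearing denominators, $q^{Mj}\Phi(j)$ is of the form $\sum_{\text{finite}} p_\alpha(j)\,q^{\alpha j}$ with $p_\alpha\in\Q[j]$ not all zero and distinct integers $\alpha$; such an exponential polynomial in $j$, with $q$ transcendental hence not a root of unity and the distinct $q^\alpha$ genuinely distinct and multiplicatively independent over the coefficient field in the relevant sense, has only finitely many zeros — this is the standard fact that $\sum p_\alpha(j)t^{\alpha}$, $t=q$ transcendental, is not identically zero on $\Z$ unless all $p_\alpha=0$, together with the observation that a nonzero such expression, being (after multiplying by a power of $t$) a nonzero element of $\Q[j][t]$, vanishes at only finitely many integer $j$ once $t$ is specialized to a transcendental. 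The only place care is needed — and the main obstacle — is making precise the second step: pinning down $T\cap Dx$ (or at least a sufficient necessary condition for membership) rigorously from the basis $\mathcal D$ and the diagonal action, rather than hand-waving "membership in $Dx$ forces the constant term of the action to vanish." I would handle this by checking directly on the basis $\mathcal D$ that every element of $Dx$, written in the basis, has no term lying in $\K\subseteq G$ (the empty monomial), equivalently that the constant term of its action on $\K[x]$ vanishes, and that this constant term, for a diagonal operator $g\in T$, is exactly $g(0,1,1)$ — then transcendence of $q$ finishes the count as above.
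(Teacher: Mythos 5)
Your overall architecture does match the paper's: reduce to the constant term $c_0$ of $c$ as a polynomial in $x$ over $\K[\sigma^{\pm 1},\tau]$, convert the condition $\gamma^j(c)\in Dx\cap C$ into the vanishing of $c_0$ at a point whose coordinates involve $j$ and $q^{\pm j}$, and then use transcendence of $q$ to bound the number of such $j$ (a step the paper delegates to \cite[12.7, 12.8]{R}). However, your second step --- the identification of $T\cap Dx$ --- rests on a false claim, and this breaks the implication you need. You assert that $(Dx)(1)\subseteq x\K[x]$, hence that a diagonal operator $g(\tau,\sigma,\sigma^{-1})\in Dx$ must have vanishing eigenvalue on $x^0$, i.e.\ $g(0,1,1)=0$. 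This is not so: $\del x\in Dx$ and $(\del x)(1)=\del(x)=1$. Concretely, $\tau+1=\del x$ lies in $Dx\cap T$ and its eigenvalue on $1$ is $1$, not $0$; your proposed repair (check on the basis $\mathcal{D}$ that elements of $Dx$ have no component on the empty monomial) fails for the same element, since $\del x=x\del+1$. Consequently the necessary condition you extract in step 3, namely $f_0(-j,q^{-j},q^{j})=0$, is simply not a consequence of $\gamma^j(c)\in Dx\cap C$, and you end up counting the integer zeros of the wrong exponential polynomial.

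The correct statement, which is what the paper uses, is that $Dx\cap C$ is the maximal ideal of $C$ generated by $x$, $\tau+1$ and $\sigma-q^{-1}$: one has $\tau+1=\del x\in Dx$ and $\sigma-q^{-1}=(1-q^{-1})\del_1x\in Dx$ (from the alternative expression $\sigma=q^{-1}(1+(q-1)\del_1x)$), and $Dx\cap C$ is a proper ideal of $C$ because $1\notin Dx$, so it coincides with this maximal ideal. Hence for $g\in T$ the criterion is $g(\tau,\sigma)\equiv 0$ at $(\tau,\sigma)=(-1,q^{-1})$, not at $(0,1)$, and the condition on $j$ becomes $c_0(q^{j-1},j-1)=0$. (You also have the direction of $\gamma$ reversed: since $\gamma$ is defined by $cx=x\gamma(c)$, the relations $\tau x=x(\tau+1)$ and $\sigma x=qx\sigma$ give $\gamma(\tau)=\tau+1$ and $\gamma(\sigma)=q\sigma$; this alone would only relabel $j$, but it compounds the confusion about the evaluation point.) With the corrected criterion your final counting step is the right idea, but note that the coefficients of $c_0$ lie in $\K$ rather than $\Q$, so ``distinct powers of $q$ are linearly independent over $\Q$'' does not immediately suffice; this is precisely the issue the cited results of Rogalski are designed to handle.
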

\begin{proof}
Here $C=\K[\sigma^{\pm 1},\tau][x;\alpha]$ where $\alpha(\sigma)=q\sigma$ and $\alpha(\tau)=\tau+1$. Also
$\tau+1=\del x\in Dx$ and $\sigma-q^{-1}=(1-q^{-1})\del_1\in Dx$ so $Dx\cap C$ is the maximal ideal of
$C$ generated by $x$, $\tau+1$ and $\sigma-q^{-1}$. As $c\notin xC$, the constant term $c_0$ of $c$, as a polynomial in $x$ over $\K[\sigma^{\pm 1},\tau]$, is nonzero. Thus $c_0=c_0(\sigma,\tau)\in \K[\sigma^{\pm 1},\tau]$ and $\gamma^j(c)\in Dx\cap C$ if and only if
$c_0(q^{j-1},j-1)=0$. The result follows by applying the methods of \cite[Section 12]{R}, in particular \cite[12.7 and 12.8]{R}, to the homogenisation of $c_0$ in
$\K[\sigma,\tau,t]$, where $s$ is minimal such that $\sigma^sc_0\in \K[\sigma,\tau]$.
\end{proof}

\begin{corollary}\label{DqKxnoeth} If $q$ is transcendental over $\Q$ then $D_q(\K[x])$ is left and right Noetherian
\end{corollary}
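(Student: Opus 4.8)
The plan is to obtain left Noetherianity by a single application of Theorem~\ref{RanoethRnoeth} with $A=D=D_q(\K[x])$ and $a=x$, taking $C$ to be the subalgebra described in Example~\ref{DqKx}, and then to deduce right Noetherianity from the self-opposite isomorphism of Corollary~\ref{datt}.

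First I would assemble the hypotheses of Theorem~\ref{RanoethRnoeth}. Example~\ref{DqKx} shows that $x$ is both right and left $C$-seminormal in $D$ and that the (two-sided) localization $B$ of $D$ at $\{x^i:i\in\N\}$ is $D_q(\K[x,x^{-1}])=D_x$, which was shown in Notation~\ref{Dxetal} to be left and right Noetherian, being isomorphic to the skew Laurent extension $A_x[X^{\pm1};\alpha]$. Proposition~\ref{DDxsimple} gives that $D/Dx$ is simple as a left $D$-module; Remark~\ref{cap} gives $\cap_{i\geq 1}x^iD=0$, hence $\cap_{i\geq 0}x^iD=0$; and Proposition~\ref{criticallyclosed}, which is exactly where the hypothesis that $q$ be transcendental over $\Q$ is used, gives that for each $c\in C\backslash xC$ there are only finitely many positive integers $j$ with $\gamma^j(c)\in Dx\cap C$. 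With all four conditions verified, Theorem~\ref{RanoethRnoeth} applies verbatim and yields that $D$ is left Noetherian.

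For the right-handed statement I would invoke Corollary~\ref{datt}: there is a ring isomorphism $D_q(\K[x])\simeq D_q(\K[x])^o$, so $D_q(\K[x])^o$ is left Noetherian, which is precisely the assertion that $D_q(\K[x])$ is right Noetherian. Alternatively, one could rerun the entire argument with ``left'' and ``right'' interchanged, using the right-seminormality of $x$ from Example~\ref{DqKx}, the simplicity of $D/xD$ as a right module and $\cap_{i\geq 1}Dx^i=0$ from Remark~\ref{cap}, the evident right-handed analogue of Proposition~\ref{criticallyclosed}, and the right-Noetherianity of $D_x$; but the route through Corollary~\ref{datt} is shorter.

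There is no substantive obstacle: every ingredient has been prepared in the preceding results, and the content of the corollary is entirely the bookkeeping of checking that $x$ and $C$ satisfy the hypotheses of Theorem~\ref{RanoethRnoeth}. The only point that requires attention is that Theorem~\ref{RanoethRnoeth} is phrased on one side only, so the right-Noetherian conclusion must be extracted separately — which is cleanly done via Corollary~\ref{datt}, and is indeed one reason that corollary was recorded.
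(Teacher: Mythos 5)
Your proof is correct, and for the left-Noetherian half it is exactly the paper's argument: verify the four hypotheses of Theorem~\ref{RanoethRnoeth} for $A=D$, $a=x$ and the subalgebra $C$ of Example~\ref{DqKx}, using Proposition~\ref{DDxsimple}, Remark~\ref{cap} and Proposition~\ref{criticallyclosed} (the last being where transcendence of $q$ over $\Q$ enters), together with the Noetherianity of the localization $D_x$ from Notation~\ref{Dxetal}. The only divergence is in how you obtain the right-handed conclusion: the paper simply appeals to ``the right-sided analogues'' of Theorem~\ref{RanoethRnoeth}, Proposition~\ref{DDxsimple}, Remark~\ref{cap} and Proposition~\ref{criticallyclosed}, i.e.\ it reruns the whole argument with sides interchanged, whereas you deduce right Noetherianity from left Noetherianity via the anti-isomorphism $D_q(\K[x])\simeq D_q(\K[x])^o$ of Corollary~\ref{datt}. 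Your route is legitimate and arguably more economical, since it avoids restating a one-sided theorem on the other side; the paper's route has the mild advantage of not depending on the existence of the self-opposite isomorphism (which is a special feature of this algebra) and so illustrates that the localization machinery itself is ambidextrous. You also correctly flag, as your fallback, exactly the argument the paper intends.
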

\begin{proof}
This follows from Theorem~\ref{RanoethRnoeth}, Proposition~\ref{DDxsimple}, Remark~\ref{cap} and Proposition~\ref{criticallyclosed} and their right-sided analogues.
\end{proof}

\begin{remark}
 Rogalski \cite[Theorem 13.3]{R} shows that the subalgebra of $D_q(\K[x])$ generated by $\del$ and $\del_1$ is Noetherian. By \eqref{ds}, it follows that the algebra $T$ generated by $\del$, $\del_1$ and $\sigma^{\pm 1}$ is also Noetherian, being a homomorphic image of a skew Laurent polynomial ring over a Noetherian ring. By \eqref{dbinv},
$\partial_{-1}\in T$ so $D_q(\K[x])$ is generated by $T$ and $x$. Corollary~\ref{DqKxnoeth} can be deduced directly from this using \cite[Theorem~2.10]{McCR}.
\end{remark}

%---------------------------------------------------------------------------
%---------------------------------------------------------------------------
\subsection{Generalization to the polynomial algebra
in several variables}
%---------------------------------------------------------------------------
%---------------------------------------------------------------------------
Let $R= \K [x_1,\cdots ,x_n]$ which is $\mathbb{Z}^n$-graded by
	$\deg (x_i) =  e_i$ where
	$e_i$ is the standard
        basis element $(0,0,\cdots, 1,0,\cdots 0)$. Let	$q_1,\cdots,q_n\in \K$ be
	transcendental over $\mathbb{Q}$.
	Let $\beta : \mathbb{Z}^n \times \mathbb{Z}^n \to \K^*$ be such that
	\begin{equation*}
	\beta (e_i, e_j) =
		\begin{cases} q_i &\textit{ if } i =j\\
	 			1 &\textit{ if }i\neq j
		\end{cases}
	\end{equation*}
	%Note that this $\beta$ is not natural as defined above.
Then we know from \cite{IM1} that
	$D_q(R) $ is generated as an algebra by the set
	$\{ \lambda_{x_i}, \del_i^{\beta^{-1}}, \del_i, \del_i^{\beta} \mid 1
        \leq i \leq n \}$ and that $D_q(R)$ is a
	simple domain.
 Here, in Notation~\ref{qnumber}, $\del_i^{\beta^k} (\mathbf{x}^{\mathbf{a}}) =
   \qnum{a_i}{q_i^k}
			\mathbf{x}^{\mathbf{a}-e_i}$ where $k\in \{-1,0,1 \}$,
		$\mathbf{a}=(a_1,\cdots ,a_n)\in \mathbb{Z}^n$, and
		$\mathbf{x}^{\mathbf{a}} := x_1^{a_1}\cdots x_n^{a_n}$.

 The following relations can be seen
	\begin{align*}
	[\del_i^{\beta^k}, x_j] &= \delta_{ij}\sigma_{ke_i},\\
	[\del_i^{\beta^k},\del_j^{\beta^m}] &= 0 \textit{ for } i\neq j,\\
	[\del_i^{\beta^k}, \sigma_{\mathbf{a}}] &= 0 \textit{ when } a_i=0,\\
	\sigma_{e_i}\del_i^{\beta^k} &= q_i^{-1}\del_i^{\beta^k} \sigma_{e_i}.
	\end{align*}

Let $R_{s,n}$ be the commutative algebra
$\K [x_1^{\pm1},\ldots ,x_s^{\pm1},x_{s+1},\ldots ,x_n]$.
It is shown in \cite{IM1} that the algebra $D_q(R_{s,n})$ is generated by the set
	$\{ \lambda_{x_i}, \lambda_{x_j^{-1}},
        \del_i^{\beta^{-1}}, \del_i, \del_i^{\beta} \mid 1
        \leq i \leq n, j\leq s \}$.
and that $D_q(R_{s,n})$ is a simple domain.
Also, following Corollary \ref{datt} we see that $D_q(R_{s,n}) \cong (D_q(R_{s,n}))^o$
for every $s, 0\leq s \leq n$.

\begin{remark} It would be very interesting to know whether the hypotheses in Theorem~\ref{RanoethRnoeth}, in particular the simplicity of $A/Aa$, can be relaxed sufficiently to deal with $D_q(R)$ where $R$ is
the polynomial algebra $\K[x_1,\dots,x_n]$, $n\geq 2$ indeterminates $x_1,\dots,x_n$ and there is a Noetherian localization $D_q(\K[x_1^{\pm1},\dots,x_n^{\pm1}])$, obtained by inverting the powers of the element $x_1\dots x_n$.
This would also cover the above algebras $D_q(R_{s,n})$ and, potentially, the algebra of quantum differential operators over the coordinate algebra of the quantum plane discussed in the next section.
\end{remark}

%------------------------------------------------------------------------------
%------------------------------------------------------------------------------
\section{Quantum coordinate algebras and quantum tori}\label{qtorus}
%------------------------------------------------------------------------------
%------------------------------------------------------------------------------

%---------------------------------------------------------------------------
%---------------------------------------------------------------------------
\subsection{The quantum plane (\cite{IM2})}\label{section:quantum-plane}
%---------------------------------------------------------------------------
%---------------------------------------------------------------------------
Let $q\in \K$ be transcendental over $\mathbb{Q}$. Let
	$R = \K\langle x,y\rangle/(xy=qyx)$, the coordinate algebra of the quantum plane.
    Let $\Gamma = \mathbb{Z}^2$ with standard basis $\{ e_1, e_2 \}$. Here we summarise the results from
	\cite{IM2} on the quantum differential operators for the bicharacter $\beta : \Gamma \times \Gamma \to \K^*$ be given by
	\begin{equation*}
	\beta (e_i, e_j) =
		\begin{cases} q &\textit{ if } i =j\\
	 			1 &\textit{ if }i\neq j
		\end{cases}
	\end{equation*}
The algebra $R$ is graded by $\Gamma$, with $\deg(x)=e_1$ and  $\deg(y)=e_2$.
Note that $\lambda_x = \rho_x \sigma_{e_2}$ and $\lambda_y =
        \rho_y \sigma_{-e_1}$.
For $a=-1,0,1$, let $\del_x^{\beta^a}, \del_y^{\beta^a}\in \End_\K(R)$
	be as follows:
	\[
	\del_x^{\beta^a} (x^iy^j) = \left( \qnum{i}{q^a} \right)
        x^{i-1}y^j \quad \textit{and}\quad
	\del_y^{\beta^a} (x^iy^j) = \left( \qnum{j}{q^a} \right)
        x^{i}y^{j-1}.
	\]
 Let $D_x$ and $D_y$  be the subalgebras of $D_q(R)$ generated by
	$\{ \lambda_x, \del_x^{\beta^a} \mid a = -1,0,1 \}$ and
		 $\{ \rho_y, \del_y^{\beta^a} \mid a = -1,0,1 \}$ respectively.
As $\K$-algebras, $D_x \cong D_y \cong D_q(\K [t])$, where $D_q(\K[t])$
is as in Subsection~\ref{poly-one-variable-generic}. Further, $D_q (R)$ which is a simple domain
	and $D_x \otimes D_y \cong D_q (R)$ as filtered algebras.
	 As in the case of the commutative polynomial algebra in $\geq 2$ indeterminates, we do not know whether
$D_q (R)$ is Noetherian, though its localization at the powers of $xy$ is Noetherian
as will be shown in Theorem \ref{theorem:A1,A2,A3}.

\begin{remark}\label{different-betas}
	The bicharacter $\beta$ above, corresponds to the matrix
	$
	\begin{pmatrix}
	q &1\\
	1 &q
	\end{pmatrix}
	$.
	Denote by $\sigma_x = \sigma_{e_1}$ and $\sigma_y = \sigma_{e_2}$  for $R$ the quantum plane
	as above.  Then, $D_q^0(R)$ is the subalgebra of $\Hom (R,R)$ generated by the set
	$\{ \sigma^{\pm 1}_x, \sigma^{\pm 1}_y, \lambda_x ,\rho_y  \}$.
	
	If the bicharacter $\beta$ corresponds to the matrix
 	$\begin{pmatrix} 1& q\\ q^{-1} &1 \end{pmatrix}$ then
	$\sigma_x = \sigma_{-e_2}$ and $\sigma_y = \sigma_{e_1}$, and again we see that
	$D_q^0(R)$ is the subalgebra of $\Hom (R,R)$ generated by the set
	$\{ \sigma^{\pm 1}_x, \sigma^{\pm 1}_y, \lambda_x ,\rho_y  \}$.
         This bicharacter is  \textit{natural} as defined below in Section \ref{higher-quantum}.

	Similarly, when $\beta$ corresponds to
	$\begin{pmatrix} 1& q\\ q &1 \end{pmatrix}$, we have $\sigma_x = \sigma_{e_2}$ and
	$\sigma_y = \sigma_{e_1}$.

	Once we see that $D_q^0(R)$ is the same algebra in each case, we refer to
	 \cite[Corollary 1.1]{IM1}, which states that
	$D^i_q(R)$ is generated as a $D^0_q(R)$-module by
	the set
	$\{ \textit{homogeneous }\varphi \mid [\varphi ,r] \in D^{i-1}_q(R) \forall r\in R \}$.
	That is, in each of the three cases of $\beta$ the algebra $D_q(R)$ is the same.
\end{remark}

%---------------------------------------------------------------------------
%---------------------------------------------------------------------------
\subsection{Higher quantum coordinate algebras}\label{higher-quantum}
%---------------------------------------------------------------------------
%---------------------------------------------------------------------------

Let  $n$ be an integer, $n\geq 1$ and $q_{ij}\in \K^*$ for
$1\leq i,j \leq n$.
Let
\[
R_n= \K \langle x_1,\ldots ,x_n\rangle  /(x_ix_j - q_{ij}x_jx_i)
\] denote the
coordinate algebra of quantum affine $n$-space $\K^n$.
Throughout this section we assume that the elements $q_{ij}$ are algebraically
independent over $\Q$.

For $1\leq s\leq n$, $R_{s,n}$ will denote the localization of $R_n$ with respect to the Ore set
$T= \{ x_1^{i_1}\cdots x_s^{i_s} \mid i_1,\ldots, i_s \geq 0 \}$, that is
\[R_{s,n}=\K \langle  x_1^{\pm1}, \ldots, x_s^{\pm1},x_{s+1},
\ldots ,x_n\rangle  /
(x_ix_j -q_{ij}x_jx_i).
\]
We refer to $R_{s,n}$ as a  \emph{quantum torus}; in particular,
$R_{n,n}$ is the \emph{quantum $n$-torus}.

Note, $R_{s,n}$ is $\Z^n$-graded
with $\deg x_i=e_i$, where $e_i$ is the standard basis element $(0,\ldots ,0,1,0,\ldots ,0)$. We have the \emph{natural} bicharacter
$\beta :\Z^n \times \Z^n \to \K^*$ given by
$\beta (e_i, e_j)=q_{ij}$. Note that, in the case where each $q_{ij}=1$, the natural bicharacter is
not the bicharacter used in the previous section when $R$ was a commutative polynomial algebra or torus.

The following result tells us that if we can identify $D_q(R_n)$ then we can identify the $D_q(R_{s,n})$ for each quantum torus. So we will concentrate first on quantum coordinate algebras of $\K^n$ and then consider quantum tori.
\begin{proposition}\label{prop:localization}
For any $\varphi \in D_q(R_{s,n})$ there exists $t\in T$ such that
$t\varphi \in D_q(R_n)$.
\end{proposition}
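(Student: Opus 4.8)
The plan is to exploit the fact that $R_{s,n}$ is obtained from $R_n$ by inverting the Ore set $T$, and that a quantum differential operator on $R_{s,n}$ is determined by how it acts on the finitely many "coordinate" generators together with the invertibility of the $\sigma$'s. More precisely, I would proceed by induction on the filtration degree $i$, proving the stronger claim: for every $\varphi \in D_q^i(R_{s,n})$ there exists $t\in T$ with $t\varphi \in D_q^i(R_n)$ (interpreting $D_q^i(R_n)$ as sitting inside $D_q^i(R_{s,n})$ via the localization, since $D_q(R_n)$ embeds in $\End_\K(R_{s,n})$ through its action extended to $R_{s,n}$). The base case $i=0$ is the key computation: $D_q^0(R_{s,n})$ is generated by the $\lambda_r,\rho_s$ with $r,s\in R_{s,n}$ and the $\sigma_{\mathbf a}$, $\mathbf a\in\Z^n$; writing any such $r$ or $s$ as $x_1^{-b_1}\cdots x_s^{-b_s}$ times an element of $R_n$ and absorbing the resulting power of $x_1\cdots x_s$ on the left, together with the relations $\sigma_{\mathbf a}\lambda_{x_i}=\lambda_{\sigma_{\mathbf a}(x_i)}\sigma_{\mathbf a}$ (which only introduce scalars when commuting $\sigma$'s past monomials), shows that left-multiplying by a suitable $t\in T$ lands any degree-$0$ operator in $D_q^0(R_n)$.

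For the inductive step, suppose the claim holds for $i-1$ and let $\varphi\in D_q^i(R_{s,n})$. By the structure of the filtration recalled in Section~\ref{Prelim}, $D_q^i(R_{s,n})$ is the $D_q^0(R_{s,n})$-bimodule generated by homogeneous $\psi$ with $[\psi,r]\in D_q^{i-1}(R_{s,n})$ for all $r$; so it suffices to treat such a generator $\psi$ (the $D_q^0$-bimodule coefficients are handled by the base case and by absorbing powers of $t$ using the Ore condition, since $T$ is an Ore set in $D_q(R_n)$ as well — one commutes a power of $t$ past an element of $D_q^0(R_{s,n})$ up to another element of $D_q^0$). For the generator $\psi$, I would apply the inductive hypothesis to the finitely many operators $[\psi,\lambda_{x_1}],\dots,[\psi,\lambda_{x_n}]$, each of which lies in $D_q^{i-1}(R_{s,n})$, obtaining a single $t\in T$ (take a common multiple) with $t[\psi,\lambda_{x_j}]\in D_q^{i-1}(R_n)$ for all $j$. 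One then checks that $[t\psi,\lambda_{x_j}]$ differs from $t[\psi,\lambda_{x_j}]$ by a term coming from $[t,\lambda_{x_j}]\psi$, which is lower-order and, after possibly enlarging $t$, again lies in $D_q^{i-1}(R_n)$; hence $t\psi$ has all its commutators with the $\lambda_{x_j}$ in $D_q^{i-1}(R_n)$. Since the $x_j$ generate $R_n$ as an algebra and commutators with $\lambda$'s behave multiplicatively ($[\varphi,\lambda_{rs}]=[\varphi,\lambda_r]\lambda_s+\lambda_{\sigma(r)}[\varphi,\lambda_s]$-type identities), $[t\psi,\lambda_r]\in D_q^{i-1}(R_n)$ for all $r\in R_n$, i.e. $t\psi$ is a quantum differential operator of order $\le i$ on $R_n$.

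The main obstacle I expect is bookkeeping the powers of $t$ and verifying that the correction terms arising when one pulls $t$ inside a commutator remain genuinely lower-order operators on $R_n$ (not merely on $R_{s,n}$) — this requires knowing that $T$ is an Ore set inside $D_q^{i-1}(R_n)$ compatibly with the filtration, which in turn rests on relations of the form $[\del_i^{\beta^k},x_j]=\delta_{ij}\sigma_{ke_i}$ and $\sigma_{e_i}\del_i^{\beta^k}=q_i^{-1}\del_i^{\beta^k}\sigma_{e_i}$ listed above, guaranteeing that commuting $x_j$ past a differential operator produces only lower-order terms with coefficients again in $D_q(R_n)$. A secondary subtlety is that $\del_i^{\beta^k}$ acting on $R_{s,n}$ restricts to an operator on $R_n$ only after clearing denominators — e.g. $\del_i^{\beta^k}(x_i^{-1})$ is not a polynomial — which is precisely why the statement is that $t\varphi$, rather than $\varphi$ itself, lies in $D_q(R_n)$; keeping track of exactly which negative powers appear is the one place a careful (but ultimately routine) computation is unavoidable.
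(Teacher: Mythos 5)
Your overall strategy is the same as the paper's: induct on the order of $\varphi$, normalise so that $\varphi(1)=0$ (equivalently, strip off the $D^0_q$ part), apply the inductive hypothesis to the finitely many commutators $[\varphi,x_l]$ to get a common denominator $t=x_1^{i_1}\cdots x_s^{i_s}\in T$, and then argue that $t\varphi$ is an order~$\le i$ operator on $R_n$. The gap is in that last step. You work with the untwisted commutator and claim that $[t\psi,\lambda_{x_j}]$ differs from $t[\psi,\lambda_{x_j}]$ by the term $[\lambda_t,\lambda_{x_j}]\psi$, which you assert is lower order (``after possibly enlarging $t$''). It is not. Since $tx_j=\beta(c,e_j)\,x_jt$ with $c=i_1e_1+\cdots+i_se_s$, one has $[\lambda_t,\lambda_{x_j}]=(\beta(c,e_j)-1)\lambda_{x_jt}$, so the correction term is a nonzero scalar multiple of $\lambda_{x_j}\,(t\psi)$ --- an operator of the \emph{same} order $i$ as $t\psi$ itself, not of order $i-1$. (It vanishes only in the commutative case $\beta\equiv 1$, which is presumably the intuition being imported; enlarging $t$ only changes the scalar.) So as written the inductive step does not show $[t\psi,x_j]\in D^{i-1}_q(R_n)$, and indeed that containment is false in general.

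The repair is exactly the device the paper uses, and it is already built into the definition of $D^i_q$: one is allowed to verify membership using a \emph{twisted} bracket $[\cdot,\cdot]_a$ for any $a\in\Gamma$. Taking $a=c=\deg t$, the identity $\lambda_{\sigma_c(x_l)}\lambda_t=\lambda_t\lambda_{x_l}$ gives
\[
[t\varphi,x_l]_c \;=\; t\varphi\lambda_{x_l}-\lambda_{\sigma_c(x_l)}t\varphi \;=\; t\,[\varphi,x_l]\;\in\;D^{i-1}_q(R_n),
\]
with no correction term at all, and the conclusion $t\varphi\in D^i_q(R_n)$ follows. This also removes the need for your auxiliary claims that $T$ is an Ore set in $D_q(R_n)$ compatibly with the filtration, which you do not establish and which the argument does not require. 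Your base case and your reduction to the generators $x_1,\dots,x_n$ (via the Leibniz identity $[\varphi,rs]_a=[\varphi,r]_a\lambda_s+\lambda_{\sigma_a(r)}[\varphi,s]_a$) are fine.
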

\begin{proof}
This is seen by induction on the order of $\varphi$. For
$\varphi \in D^0_q(R_{s,n})$, $\varphi$
is given by multiplications and grading maps and hence the result is
true.

Let $i\geq 1$ and suppose that
$\varphi \in D^i_q(R_{s,n})$ is such that $[\varphi ,r^{\prime}] \in
D_q^{i-1}(R_{s,n})$
for all $r^{\prime} \in
R_{s,n}$. Replacing $\varphi$ by $\varphi - \rho_{\varphi (1)}$, we
can assume that
$\varphi (1)=0$.
For each $l\leq n$,
$[\varphi ,x_l]\in
D_q^{i-1}(R_{s,n})$ so there exists $t_l\in T$ such that $t[\varphi ,x_l]\in
D_q^{i-1}(R)$
Hence there exists $t=x_1^{i_1}\ldots x_s^{i_s}\in T$ such that $t[\varphi ,x_l]\in
D_q^{i-1}(R)$ for all $l\leq n$.
Let $c = i_1e_1+\ldots +i_se_s$. Then
\[
[(t\varphi), x_l]_c =t[\varphi ,x_l]   \in D^{i-1}_q (R).
\]
Further, $(t\varphi) \in
\grHom_{\K}(R,R)$
and
hence $(t\varphi) \in D_q^i(R)$.
\end{proof}

\begin{remark}\label{remark:localization}
Proposition \ref{prop:localization} is true even when $\beta \equiv 1$
for the quantum torus, or for the polynomial algebra and any general $\beta$
by setting $c=0$ in the proof above.
\end{remark}

%---------------------------------------------------------------------------
%---------------------------------------------------------------------------
\subsection{The quantized coordinate algebra
on more than two
variables.}
%---------------------------------------------------------------------------
%---------------------------------------------------------------------------
%-----------------------------------------------------------------------------
%-----------------------------------------------------------------------------
Let $R_n=\K\langle x_1,\cdots ,x_n\rangle/(x_ix_j = q_{ij}x_jx_i)$ denote the
coordinate algebra of quantum $n$-space. The algebra $R_n$ is $\Z^n$-graded,
and we shall use the natural bicharacter $\beta$.

%---------------------------------------------------------------------------
%---------------------------------------------------------------------------
\subsubsection{\textbf{The algebra $D_q^0(R)$}}
%---------------------------------------------------------------------------
%---------------------------------------------------------------------------
%---------------------------------------------------------------------------
%---------------------------------------------------------------------------
As explained in Section~\ref{Prelim}, the algebra $D_q^0(R)$
is a homomorphic image of $(R\otimes_{Z(R)}R^o)\# \Z^n$, where
$
(a\otimes b^o)\gamma \mapsto \lambda_a\rho_b\sigma_{\gamma}.
$
Note that $Z(R)=\K$. Further, $\lambda_{x_i}=\rho_{x_i}\sigma_{e_i}$ for
$1\leq i\leq n$. Hence, $D_q^0(R)$ is generated as an algebra by the
set $\{ \lambda_{x_i}, \sigma_{e_i},\sigma_{-e_i} \mid 1\leq i \leq n \}$.
That is, $D_q^0(R) $ is a homomorphic image of $R\# \Z^n$.

Suppose
$\sum_i \lambda_{r_i} \sigma_{a_i}  =0$, then by degree considerations
and using the fact that $R$ is a domain, we may assume that
$\psi=\sum_i \alpha_i \sigma_{a_i} =0$. Since the parameters $q_{ij}$ are
algebraically independent, applying $\psi$ on the monomials $ x^iy^j $,
we get $\alpha_i =0\forall i$. That is, $D_q^0 (R) \cong R \# \Z^n$.
In particular, $D_q^0(R)$ is
a domain, free over $R$ with basis $\{ \sigma_a \mid a\in \Z^n \}$.

%---------------------------------------------------------------------------
%---------------------------------------------------------------------------
\subsubsection{\textbf{The $R$-bimodule $D_q^1(R)$}}
%---------------------------------------------------------------------------
%---------------------------------------------------------------------------

\newcommand{\Rn}{\ensuremath{\K\langle x_1,
    \ldots,x_n \rangle /(x_i x_j = q_{ij} x_j x_i)}}
\newcommand{\Rxy}{\ensuremath{\K \langle x, y \rangle /(xy = q yx
    )}}
%-----------------------------------------------------------------------
%-----------------------------------------------------------------------

%-----------------------------------------------------------------------
%-----------------------------------------------------------------------
\begin{proposition}
  Let $\Phi$ be an endomorphism of
  $R_n$ and, for $1\leq i\leq n$, let $\varphi_i=[ \Phi, \lambda_{x_i}]$.
  Then
$
   \varphi_i \lambda_{x_j} - q_{ij} \lambda_{x_j} \varphi_i =
  q_{ij} \varphi_j \lambda_{x_i} - \lambda_{x_i} \varphi_j
$.
\end{proposition}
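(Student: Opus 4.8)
The plan is a direct computation in $\End_\K(R_n)$; the only structural fact required is that the defining relations of $R_n$ are visible on the operators of left multiplication. Indeed, applying $\lambda_{x_i}\lambda_{x_j}$ and $\lambda_{x_j}\lambda_{x_i}$ to an arbitrary $r\in R_n$ and using $x_ix_j=q_{ij}x_jx_i$ gives
\[\lambda_{x_i}\lambda_{x_j}=q_{ij}\lambda_{x_j}\lambda_{x_i}\]
in $\End_\K(R_n)$ for all $i,j$. Note also that $\Phi$ may be taken to be an arbitrary $\K$-linear endomorphism: only associativity of composition, bilinearity of the commutator, and the displayed relation will be used.

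First I would substitute $\varphi_i=\Phi\lambda_{x_i}-\lambda_{x_i}\Phi$ into the left-hand side $\varphi_i\lambda_{x_j}-q_{ij}\lambda_{x_j}\varphi_i$ and expand, obtaining
\[\Phi\lambda_{x_i}\lambda_{x_j}-\lambda_{x_i}\Phi\lambda_{x_j}-q_{ij}\lambda_{x_j}\Phi\lambda_{x_i}+q_{ij}\lambda_{x_j}\lambda_{x_i}\Phi.\]
Applying the displayed commutation relation to the first term, where $\Phi\lambda_{x_i}\lambda_{x_j}=q_{ij}\Phi\lambda_{x_j}\lambda_{x_i}$, and to the last term, where $q_{ij}\lambda_{x_j}\lambda_{x_i}\Phi=\lambda_{x_i}\lambda_{x_j}\Phi$, rewrites the left-hand side as
\[q_{ij}\Phi\lambda_{x_j}\lambda_{x_i}-\lambda_{x_i}\Phi\lambda_{x_j}-q_{ij}\lambda_{x_j}\Phi\lambda_{x_i}+\lambda_{x_i}\lambda_{x_j}\Phi.\]
Next I would expand the right-hand side $q_{ij}\varphi_j\lambda_{x_i}-\lambda_{x_i}\varphi_j$, with $\varphi_j=\Phi\lambda_{x_j}-\lambda_{x_j}\Phi$; this produces exactly the same four terms, and this time without invoking any commutation relation. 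Hence the two sides are equal, as claimed.

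There is no real obstacle here: the identity is a routine verification, amounting to the quantum Jacobi/Leibniz identity attached to the $q_{ij}$-commuting pair $\lambda_{x_i},\lambda_{x_j}$. The only thing to keep track of is the placement of the scalars $q_{ij}$; a pleasant consistency check is that, using $q_{ij}q_{ji}=1$, the asserted equality amounts to saying that interchanging $i$ and $j$ in the left-hand expression multiplies it by $q_{ij}$.
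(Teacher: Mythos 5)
Your proof is correct and is essentially the paper's argument made explicit: the paper deduces the identity from $[\Phi,\lambda_{x_ix_j}]=q_{ij}[\Phi,\lambda_{x_jx_i}]$, which upon expanding the commutators via $\lambda_{x_i}\lambda_{x_j}=q_{ij}\lambda_{x_j}\lambda_{x_i}$ is exactly your four-term computation. Your closing remark that only $\K$-linearity of $\Phi$ is used (no ring-endomorphism property) is a fair and accurate observation.
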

\begin{proof} This follows from $[ \Phi , \lambda_{x_ix_j} ] = q_{ij}[ \Phi, \lambda_{x_jx_i} ]$.
\end{proof}
The next result reveals a significant difference between the case $n=2$ and the case $n\geq 3$.
\begin{corollary}\label{Cor:a=e1+me_2,Cor:a=e1}
  Let $\Phi \in \End(R)$ and , for $1\leq i\leq n$, let $\varphi_i=[ \Phi, \lambda_{x_i}]$.
 \begin{enumerate}
   \item
  Let $n=2$ and suppose there exists $a\in \Z^n$ such that $\phi_1 = \sigma_a$, and $\phi_2 = 0$.
  Then there exists $m\in \Z$ such that $a=e_1+m e_2$. Similarly, if
  $\phi_1=0$, and $\phi_2= \sigma_a$ for some $a\in \Z^n$ then there exists $m\in \Z$ such that
  $a=me_1+e_2$.
  \item
  Let $n \geq 3$ and suppose there exist $a\in \Z^n$ and $1\leq i\leq n$ such that $\phi_i = \sigma_a$,
  and $\phi_j = 0$ for $j\neq i$.  Then $a = e_i$.
  \item Let $n\geq 3$ and let $1\leq i\leq n$. There is a right $\sigma_{e_i}$-derivation $\delta_i$ of $R_n$
  such that $[ \delta_i, \lambda_{x_i}]=\sigma_{e_i}$ and $[\delta_i, \lambda_{x_j}]=0$ if $j\neq i$.
    \end{enumerate}
\end{corollary}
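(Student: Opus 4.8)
For parts (1) and (2), the plan is to combine the identity of the preceding Proposition with the relation $\sigma_a\lambda_r=\lambda_{\sigma_a(r)}\sigma_a$ of Section~\ref{Prelim}. Substituting the hypotheses into
\[
\varphi_i\lambda_{x_j}-q_{ij}\lambda_{x_j}\varphi_i=q_{ij}\varphi_j\lambda_{x_i}-\lambda_{x_i}\varphi_j ,
\]
with $\varphi_i=\sigma_a$ and $\varphi_j=0$ for every $j\neq i$, makes the right-hand side vanish for each such $j$, leaving $\sigma_a\lambda_{x_j}=q_{ij}\lambda_{x_j}\sigma_a$; the remaining subcase of (1), in which $\varphi_1=0$ and $\varphi_2=\sigma_a$, reduces to the same equation with the roles of $1$ and $2$ interchanged (using $q_{12}^{-1}=q_{21}$). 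Since $\sigma_a\lambda_{x_j}=\beta(a,e_j)\lambda_{x_j}\sigma_a$ and $\lambda_{x_j}\sigma_a$ is a nonzero operator, this gives the scalar identity $\beta(a,e_j)=q_{ij}$ for every $j\neq i$. Expanding $\beta(a,e_j)=\prod_k q_{kj}^{a_k}$ and recalling $q_{jj}=1$ and $q_{kj}=q_{jk}^{-1}$, the algebraic independence of the parameters over $\Q$ --- hence their multiplicative independence --- forces, on comparing exponents, $a_k=\delta_{ik}$ for all $k\neq j$, with $a_j$ unconstrained. When $n=2$ there is only one index $j\neq i$, so $a\in e_i+\Z e_j$, which is (1); when $n\geq 3$ there are two distinct indices $j,j'\neq i$, and the two resulting constraints together fix every coordinate of $a$, so $a=e_i$, which is (2).

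For (3), the plan is to obtain $\delta_i$ by descent from the free algebra $F=\K\langle x_1,\dots,x_n\rangle$. Extend $\sigma_{e_i}$ to an automorphism of $F$ by the same grading formula, and let $\widetilde\delta_i$ be the unique right $\sigma_{e_i}$-derivation of $F$ with $\widetilde\delta_i(x_j)=\delta_{ij}$ for all $j$ (such a skew derivation exists because its values on the free generators may be prescribed arbitrarily). A short computation shows that $\widetilde\delta_i$ annihilates each defining relator $x_kx_l-q_{kl}x_lx_k$ --- splitting into the cases $i\notin\{k,l\}$, $i=k$, $i=l$ and using $q_{kk}=1$, $q_{lk}=q_{kl}^{-1}$ --- so, since $\sigma_{e_i}$ preserves the defining ideal, $\widetilde\delta_i$ descends to a right $\sigma_{e_i}$-derivation $\delta_i$ of $R_n$ with $\delta_i(x_i)=1$ and $\delta_i(x_j)=0$ for $j\neq i$. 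Finally, the identity $[\varphi,\lambda_r]=\lambda_{\varphi(r)}\sigma_b$ valid for any right $\sigma_b$-derivation (Section~\ref{Prelim}), applied with $b=e_i$, gives $[\delta_i,\lambda_{x_i}]=\lambda_1\sigma_{e_i}=\sigma_{e_i}$ and $[\delta_i,\lambda_{x_j}]=\lambda_0\sigma_{e_i}=0$, which is exactly (3).

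I do not expect a serious obstacle. The content of (1) and (2) is the exponent comparison, and the only point requiring care is the bookkeeping $q_{jj}=1$, $q_{kj}=q_{jk}^{-1}$, which is precisely what makes a single index $j$ too few when $n=2$ but two indices sufficient when $n\geq 3$. For (3), the one step I would carry out in full is the verification that $\widetilde\delta_i$ annihilates the relators --- equivalently, that the twisted Leibniz rule is compatible with $x_kx_l=q_{kl}x_lx_k$ --- which is a routine case analysis.
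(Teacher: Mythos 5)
Your proposal is correct and follows essentially the same route as the paper: parts (1) and (2) are obtained by feeding the hypotheses into the preceding Proposition's identity to get $\beta(a,e_j)=q_{ij}$ for each $j\neq i$ and then using the multiplicative independence of the parameters (one free coordinate when $n=2$, none when $n\geq 3$), and part (3) is the same verification that the prescribed values on the generators respect the relators $x_kx_l-q_{kl}x_lx_k$, followed by the formula $[\varphi,\lambda_r]=\lambda_{\varphi(r)}\sigma_b$ for right skew derivations. Your write-up merely makes explicit two points the paper leaves implicit (the exponent comparison and the descent from the free algebra), which is fine.
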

\begin{proof}
  \begin{enumerate}
  \item
   We have $\sigma_a \lambda_{x_2} - q_{12} \lambda_{x_2} \sigma_a = 0$.
  Thus $\sigma_a(x_2) = q_{12} x_2$, $\beta(a, e_2) = q_{12}$,
   $\beta(a - e_1, e_2) = 1$ and $a - e_1 = m e_2$ for some $m$. The second claim is proved similarly.
\item
  Choose $j_1, j_2$ such that $i$, $j_1$ and $j_2$ are distinct. Since $\phi_{j_1} = \phi_{j_2} = 0$,
  we can apply the
  reasoning in the proof of (1) to conclude that
  $\beta(a - e_{i}, e_{j_1}) = 1$ and $\beta(a - e_i, e_{j_2}) = 1$.
  Thus there are integers $m_1$ and $m_2$ such that
  $a = e_i + m_1 e_{j_1}$ and $a = e_i + m_2 e_{j_2}$.
  It follows that $m_{j_1} = m_{j_2} = 0$.
\item Set $\delta_i(x_i)=1$ and $\delta_i(x_j)=0$ if $j\neq i$. Then $\delta_i$ extends to a right $\sigma_{e_i}$-derivation $\delta_i$ of $R_n$ such that, for $j\neq i$,
    $\delta(x_ix_j)=q_{ij}x_j=q_{ij}\delta(x_jx_i)$, so that the defining relations of $R_n$ are respected
    and $\delta_i$ is indeed a right $\sigma_{e_i}$-derivation of $R_n$.
    It is a routine matter to verify that $[ \delta_i, \lambda_{x_i}]=\sigma_{e_i}$ and $[\delta_i, \lambda_{x_j}]=0$ if $j\neq i$.
\end{enumerate}
\end{proof}
\begin{definition}
 Let $n\geq 3$ and let $1\leq i\leq n$. We shall retain the notation of  Corollary~\ref{Cor:a=e1+me_2,Cor:a=e1}(3).
 Thus $\delta_i$ is the
   right $\sigma_{e_i}$-derivation $\delta_i$ of $R_n$ such that $\delta_i(x_i)=1$ and $\delta_i(x_j)=0$ if $j\neq i$.
 In general, for $1\leq i \leq n$, .
\[
\delta_i (x_1^{m_1}x_2^{m_2}\cdots x_n^{m_n} )
= m_i \left(\prod_{j>i} q_{ij}^{m_j}
\right) x_1^{m_1}\cdots x_{i-1}^{m_{i-1}}x_i^{m_i-1}
	x_{i+1}^{m_{i+1}}\cdots x_n^{m_n}.
\]
Let $\del_i:=\sigma_{-e_i}\delta_i$. Then $\del_i$ is a left $\sigma_{-e_i}$-derivation of $R_n$ such that
$\del_i(x_i)=1$, $\del_i(x_j)=0$ if $j\neq i$ and
\[
    [\del_i, \rho_{x_j}]  = \begin{cases} \sigma_{-e_i}& \mbox{ if }i = j \\ 0 & \mbox{ if } i \neq j.
    \end{cases}
\]
\end{definition}

\begin{theorem}\label{thm:1st order}
The left $D^0_q(R)$-module $D^1_q(R)$ is generated by the set
$\{ \delta_i \mid 1\leq i \leq n \}\cup \{ 1 \}$.
\end{theorem}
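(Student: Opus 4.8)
The plan is to encode homogeneous first–order operators by scalar functions on $\N^n$, turn the defining condition into a system of linear difference equations, and invoke the hypothesis $n\ge3$ — in the sharp form of Corollary~\ref{Cor:a=e1+me_2,Cor:a=e1}(2) — only at the single place where it is really needed: when a quasi‑polynomial solution is restricted to a coordinate hyperplane.

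First I would reduce the statement. By the preliminaries, $D^1_q(R_n)$ is the $D^0_q(R_n)$‑bimodule generated by $\{1\}$ together with the homogeneous $\varphi\in\End_\K(R_n)$ with $[\varphi,\lambda_{x_i}]\in D^0_q(R_n)$ for $1\le i\le n$ (the corresponding condition for an arbitrary $r\in R_n$ follows from that for the generators $x_i$, since $D^0_q(R_n)$ is an $R_n$‑sub‑bimodule and $[\varphi,\lambda_{rs}]=[\varphi,\lambda_r]\lambda_s+\lambda_r[\varphi,\lambda_s]$). For such a homogeneous $\varphi$, $\varphi\sigma_a$ is a nonzero scalar multiple of $\sigma_a\varphi$ and $\varphi\lambda_{x_i}=[\varphi,\lambda_{x_i}]+\lambda_{x_i}\varphi$, so the right $D^0_q(R_n)$‑action keeps $\varphi$ inside $D^0_q(R_n)+D^0_q(R_n)\varphi$; hence the bimodule above coincides with the left $D^0_q(R_n)$‑module it generates, and it suffices to show that every such homogeneous $\varphi$, of degree $d\in\Z^n$ say, lies in $D^0_q(R_n)\cdot 1+\sum_{i=1}^nD^0_q(R_n)\delta_i$. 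Since $R_n$ is $\N^n$‑graded with one‑dimensional graded components, $\varphi$ is determined by scalars $\nu_m$ $(m\in\N^n)$ with $\varphi(\mathbf{x}^{m})=\nu_m\mathbf{x}^{m+d}$ and $\nu_m=0$ whenever $m+d\notin\N^n$; moreover, writing $\chi_d$ for the character of $\N^n$ attached to $d$ (the $\nu$‑function of $\rho_{\mathbf{x}^{d}}$ when $d\in\N^n$), the homogeneous degree‑$d$ part of $D^0_q(R_n)\cong R_n\#\Z^n$ is $\{\lambda_{\mathbf{x}^{d}}\sum_a c_a\sigma_a\}$ (with $\nu$‑function $m\mapsto\chi_d(m)\sum_a c_a\beta(a,m)$, nonzero only when $d\in\N^n$), $\rho_{\mathbf{x}^{d}}$ is a nonzero scalar multiple of $\lambda_{\mathbf{x}^{d}}\sigma_{-d}$, and $\rho_{\mathbf{x}^{d+e_i}}\delta_i\in D^0_q(R_n)\delta_i$ has $\nu$‑function a nonzero scalar multiple of $m\mapsto m_i\chi_d(m)$ (defined when $d+e_i\in\N^n$).

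Next I would solve for $\nu$. Applying the Proposition preceding Corollary~\ref{Cor:a=e1+me_2,Cor:a=e1} to $\Phi=\varphi$ (or simply evaluating $[\varphi,\lambda_{x_j}]$ on monomials), the hypothesis $[\varphi,\lambda_{x_j}]\in D^0_q(R_n)$ becomes, for $1\le j\le n$, a linear difference equation for $\nu$ whose inhomogeneous term is a finite combination of characters $m\mapsto\beta(a,m)$; because the $q_{ij}$ are algebraically independent over $\Q$, these equations are consistent exactly when the evident cocycle identities hold, and their solution is the affine quasi‑polynomial $\nu_m=\chi_d(m)\bigl(\sum_a\tilde\alpha_a\beta(a,m)+\sum_i\tilde\zeta_i m_i\bigr)$ (the linear part coming from a single resonance, whose uniqueness uses algebraic independence). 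If $d\in\N^n$ this is precisely the $\nu$‑function of an element of $D^0_q(R_n)\cdot 1+\sum_iD^0_q(R_n)\delta_i$ (a suitable $\lambda_{\mathbf{x}^{d}}\sum_a\tilde\alpha_a\sigma_a$ plus $\sum_i\tilde\zeta'_i\rho_{\mathbf{x}^{d+e_i}}\delta_i$), and we are done. If $d\notin\N^n$, say $d_{i_0}<0$, then $\nu_m$ — hence the quasi‑polynomial in parentheses — vanishes for all $m$ with $m_{i_0}<-d_{i_0}$, in particular on the hyperplane $m_{i_0}=0$; since $n\ge3$ the restricted characters $\beta(a,m)|_{m_{i_0}=0}$ remain pairwise distinct and linearly independent (this is exactly where Corollary~\ref{Cor:a=e1+me_2,Cor:a=e1}(2), with its conclusion $a=e_i$, is used in place of (1)), so this vanishing forces $\tilde\alpha_a=0$ for all $a$ and $\tilde\zeta_i=0$ for $i\ne i_0$; if moreover $d_{i_0}\le-2$ or $d$ has a second negative coordinate, a ``thicker slab'' or second‑hyperplane version of the same argument forces $\tilde\zeta_{i_0}=0$ and $\varphi=0$, and otherwise $d=-e_{i_0}+c$ with $c\in\N^n$, $c_{i_0}=0$, so that $\nu_m=\tilde\zeta_{i_0}m_{i_0}\chi_d(m)$ is a scalar multiple of the $\nu$‑function of $\rho_{\mathbf{x}^{c}}\delta_{i_0}$ and $\varphi\in D^0_q(R_n)\delta_{i_0}$.

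The step I expect to be the main obstacle is this last one: verifying that a quasi‑polynomial which vanishes on a coordinate hyperplane has all of its coefficients along the remaining coordinate equal to zero. This is exactly where the hypothesis $n\ge3$ is indispensable — for $n\le2$ such a hyperplane has at most one free coordinate, distinct characters may agree there, and the extra first‑order operators $\partial^{\beta^{\pm1}}$ of the quantum plane genuinely survive — so any proof must use $n\ge3$ at precisely this point.
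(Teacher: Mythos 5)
Your proposal is correct, and while it follows the same skeleton as the paper's proof (reduce to a homogeneous $\varphi$ of degree $d$ with $[\varphi,\lambda_{x_i}]\in D^0_q(R)$ for all $i$; split on whether $d\in\N^n$; exploit linear independence of the $\sigma_a$, which rests on algebraic independence of the $q_{ij}$; invoke $n\geq 3$ to force $a=e_i$), the technical vehicle is genuinely different. The paper argues entirely inside the operator algebra: it writes each $[\varphi,x_i]$ as $\rho_{t_i}\sum_a\alpha_a^i\sigma_a$, feeds the identities $[\varphi,x_ix_j]=q_{ij}[\varphi,x_jx_i]$ into the free $R$-basis $\{\sigma_a\}$ of $D^0_q(R)$, and successively subtracts candidate generators until nothing remains. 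You instead pass to the scalar ("symbol") function $\nu$ on $\N^n$, observe that the first-order condition is a consistent system of first-order linear difference equations whose only resonance is at the trivial character, and read off the \emph{entire} solution space $\chi_d\cdot(\text{characters}+\text{linear part})$ in one step; the case analysis then reduces to matching this space against the $\nu$-functions of $\lambda_{\mathbf{x}^d}\sigma_a$ and $\rho_{\mathbf{x}^{d+e_i}}\delta_i$, with $n\geq3$ entering only once, at the hyperplane-restriction step when $d\notin\N^n$ (your identification of that step as the exact point of failure for $n\le 2$ is accurate — that is where the paper uses Corollary~\ref{Cor:a=e1+me_2,Cor:a=e1}(2) versus (1)). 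What your route buys is transparency: one sees the full space of first-order symbols and exactly which constraints kill the extraneous ones, rather than an elimination whose termination must be tracked by hand. What the paper's route buys is that the same commutator manipulations are reused verbatim in Lemmas~\ref{Lm1:Dq-qtorus}--\ref{Lm3:Dq-qtorus} for the higher-order identification of $D_q(R)$. The only places where your sketch leaves nontrivial bookkeeping are (i) the consistency ("cocycle") identities, which you should note are automatic since they are just $[\varphi,x_ix_j]=q_{ij}[\varphi,x_jx_i]$, and (ii) the verification that the characters occurring in the solved $\nu$ lie in the lattice $\chi_d\cdot\{\beta(a,\cdot):a\in\Z^n\}$ realized by $\lambda_{\mathbf{x}^d}\sigma_a$, and that the resonant character $\chi_d$ is precisely the character attached to $\rho_{\mathbf{x}^{d+e_i}}\delta_i$; both follow from the fact that $\lambda_{\mathbf{x}^d}\sigma_a$ and $\rho_{\mathbf{x}^{d+e_i}}\delta_i$ are themselves solutions of the same system together with linear independence of exponential polynomials, but a complete write-up should say so explicitly.
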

\begin{proof}
Let $\varphi \in D^1_q(R)\backslash D_q^0(R)$ be such that
$[\varphi ,x_i] \in D^0_q(R)$ for $1\leq i \leq n$.
By considering $\varphi-\rho_{\varphi (1)}$, we assume that
$\varphi (1)=0$.
Let $\deg\varphi =
\sum r_i e_i$ for $r_i \in \Z$. Note,
$\deg[\varphi ,x_j]= (r_j+1)e_j + \sum_{i\neq j} r_ie_i$.
Since $[\varphi ,x_j]\in D_q^0(R)$, we have $r_j\geq -1$ for every $j$.

\noindent
\textbf{Case 1:} Suppose that $r_j=-1$ for some $j$. Then for $i\neq j$, some
monomial $t_i \in R$, and $\alpha_{a_i}\in \K$, we have
$[\varphi ,x_i] = \rho_{t_i}
\left( \sum_{a_i\in \Z^n} \alpha_{a_i}\sigma_{a_i} \right)$ implies that
$t_i=0$ since $t_i$ cannot have degree $(-1)$ in the variable $x_j$.

Further, $r_i\geq 0$ for $i\neq j$. That is,
\[
[\varphi ,x_j] =\rho_t \left( \sum_{a\in \Z^n} \alpha_a \sigma_a \right),
\quad [\varphi ,x_i]=0 \quad \textit{for } i\neq j,\quad
\textit{where } t= \prod_{i\neq j}x_i^{r_i}.
\]
To avoid too much notation, without loss of generality consider the case
\[
[\varphi ,x_1] =\rho_t \left( \sum_{a\in \Z^n} \alpha_a \sigma_a \right),
\quad [\varphi ,x_i]=0 \quad \textit{for } i\neq 1.
\]
Note, $[\varphi ,x_1x_i]=q_{1i}[\varphi ,x_ix_1]$ for $i\neq 1$
implies that \[\rho_t \left( \sum_{a\in \Z^n}\alpha_a \sigma_a \right) x_i
=q_{1i}x_i \rho_t \left( \sum_{a\in \Z^n} \alpha_a \sigma_a \right).\]
Hence, \[
x_i\rho_t \left( \sum_{a\in \Z^n}\alpha_a \beta(a,e_i)\sigma_a \right)
=x_i\rho_t \left( \sum_{a\in \Z^n} q_{1i}\alpha_a \sigma_a \right).
\]
Since
$D^0_a(R)$ is a domain and the set $\{ \sigma_a \}_{a\in \Z^n}$ is
linearly independent over $\K$, we have
$\alpha_a \beta(a,e_i) = \alpha_a q_{1i}$.

Thus, whenever $\alpha_a \neq 0$, $\beta (a,e_i) = q_{1i}$. Hence
$a=e_1+m_{a_i}e_i$ for some $m_{a_i} \in \Z$, for every $i, i\neq 1$.
Since $n\geq 3$, this implies that $a=e_1$.
That is, $[\varphi ,x_1]=\rho_t\alpha_{e_1} \sigma_{e_1}$ and
$[\varphi ,x_i]=0$ for $i\neq 1$. Since $\varphi (1)=0$
this implies that $\varphi = \alpha_{e_1} \rho_t\delta_1$.

\noindent
\textbf{Case 2:} Assume that $r_j\geq 0$ for every $j, 1\leq j \leq n$.
Let $[\varphi ,x_i]=\rho_{t_i}
\left( \sum_{a\in \Z^n}\alpha_{a}^i\sigma_{a} \right)$ for $i\leq n$.
For each $i$, let $\left( \dfrac{t_i}{x_i} \right)
:= x_1^{r_1}\cdots x_n^{r_n}$.
By relacing $\varphi$ by
$\varphi - \sum_i \alpha_{e_i}^i \rho_{t_i}\sigma_{e_i}$, we
assume that
\[
[\varphi ,x_i]=\rho_{t_i}
\left( \sum_{a\in \Z^n\setminus \{ e_i \} }
  \alpha_{a}^i\sigma_{a} \right)
 \quad \textit{for }i\leq n.
\]
Without loss of generality assume that $[\varphi ,x_1]\neq 0$.
Suppose $\alpha^1_a \neq 0$ for some $a\notin \Z e_1$. Then,
there exists $c\in \K^*$ such that
$\psi_a = c\rho_{\left(\frac{t_1}{x_1}\right)} \sigma_{a-e_1}$
has the property $[\psi_a, x_1] = \alpha^1_a \rho_{t_1} \sigma_a$.
Therefore, replacing $\varphi$ by $\varphi -\sum_{a\notin \Z e_1, \alpha^1_a \neq 0} \psi_a$
we have two subcases:

\textbf{Subcase 1:}
$[\varphi ,x_1] = \rho_{t_1} \sum_{n\geq n_0, n\neq 1} \alpha_{ne_1}^1
\sigma_{ne_1} $, and $\alpha_{n_0e_1}\neq 0$.
Here, if $[\varphi ,x_i]=0$ for every $i\neq 1$, then by considering the
equation $[\varphi ,x_1x_i]=q_{1i}[\varphi ,x_ix_1]$ we see that
$\alpha_{ne_1}^1 = 0$ whenever $n\neq 1$. This contradicts our assumption.

Hence we can assume that
$[\varphi ,x_2] = \rho_{t_2} \sum_{a\in \Z^n\setminus \{e_2\}}
\alpha^2_{a} \sigma_a \neq 0$.
Now consider the equation
$[\varphi ,x_1x_2]=q_{12}[\varphi ,x_2x_1]$. Note that
$\lambda_{x_i} = \rho_{x_i}\sigma_{e_i}$. Then
there exist $c_{12}, c_{21} \in \K^*$ such that
\[
[\varphi ,x_1x_2] =
c_{21}\rho_t \left( q_{12}^{n_0}\alpha^1_{n_0e_1} \sigma_{n_0e_1+e_2}+\ldots
  \right) + c_{12} \rho_t \left( \alpha^2_a\sigma_{a+e_1}+\ldots \right)
\quad \textit{while}
\]
\[
q_{12}[\varphi,x_1x_2] = c_{12}\rho_t
\left(q_{12}q_{a1}\alpha^2_a\sigma_{a+e_1}+\ldots \right)+
c_{21}\rho_t \left(q_{12}\alpha_{n_0e_1}^1 \sigma_{n_0e_1+e_2}
+ \ldots\right),
\]
where $q_{a1}=\beta(a,e_1)$ and $t=x_1^{r_1+1}x_2^{r_2+1}x_3^{r_3}\cdots
x_n^{r_n}$.

Note, if $n_0e_1+e_2=
a+e_1$ for some $a$ such that $\alpha^2_a \neq 0$, then
$a= (n_0-1)e_1+e_2$ and $q_{a1}=q_{21}$. Thus,
we get $c_{21}q_{12}^{n_0} \alpha^1_{n_0e_1} +c_{12}\alpha^2_a=
c_{12}q_{12}q_{a1}\alpha^2_a +c_{21}q_{12}\alpha^1_{n_0e_1}$. Note,
$q_{12}q_{a1}=1$. Thus, we see that $n_0=1$ which contradicts our assumption.

Hence $n_0e_1+e_2 \neq a+e_1$ with $\alpha^2_a \neq 0$. Then, we have
$c_{21}q_{12}^{n_0} \alpha^1_{n_0e_1} =
c_{21}q_{12}\alpha^1_{n_0e_1}$, which also implies that $n_0=1$.
We therefore move to the next subcase.

\textbf{Subcase 2:} $[\varphi ,x_1]=0$. For any $i$ such that
$[\varphi ,x_i]\neq 0$, we consider the equation $[\varphi ,x_1x_i]
=q_{1i}[\varphi ,x_ix_1]$. This gives rise to the equation
$x_1\rho_{t_i}\left( \sum_{a\in \Z^n\setminus \{e_i \}} \alpha^i_a \sigma_a
\right) =
x_1\rho_{t_i}\left( \sum_{a\in \Z^n\setminus \{e_i\} }q_{1i}q_{a1}
\alpha^i_a \sigma_a
\right)$, thus implying that $q_{a1}=q_{i1}$; in other words, $a=e_i+ne_1$
for some $n\in \Z$ dependent on $a$.  In particular, if
$[\varphi ,x_2]\neq 0$
then
$[\varphi ,x_2]= \rho_{t_2}\sum_{
  n\in \Z\setminus \{ 0 \}} \alpha^2_n
\sigma_{ne_1+e_2} $. For any $n$ such that
$\alpha^2_n\neq 0$,  there exists $c_n \in \K^*$ such that
$\psi_n := c_n \rho_{\frac{t_2}{x_2}} \sigma_{ne_1}$ has the property
that $[\psi_n ,x_2] = \alpha^2_n \rho_{t_2}\sigma_{ne_1+e_2} $;
note that $[\psi_n,x_1]=0$. Thus, replacing $\varphi$ by
$\varphi - \sum_{\alpha^2_n\neq 0} \psi_n$ we get
that $[\varphi,x_1]=[\varphi ,x_2]=0$.
This in turn implies
for
$i\notin \{ 1,2 \}$ (by considering
$[\varphi ,x_2x_i] = q_{2i}[\varphi ,x_ix_2]$), that
whenever $\alpha^i_a\neq 0$ we get
$a=e_i+me_2$
and (by considering $[\varphi, x_1x_i]  =q_{1i}[\varphi ,x_ix_1]$)
 $a=e_i +ne_1$
for some $m,n\in \Z$ dependent on $a$. Thus, $a=e_i$, which
contradicts our assumptions.  Hence, $[\varphi ,x_1]=[\varphi ,x_2]=0$
implies that $[\varphi ,x_i]=0$ for every $i\notin \{ 1,2 \}$.
Thus, $\varphi = \rho_t$.
\end{proof}
%---------------------------------------------------------------------------
%---------------------------------------------------------------------------
\subsubsection{\textbf{The algebra $D_q(R)$}}
%---------------------------------------------------------------------------
%---------------------------------------------------------------------------
We first present some relations among the operators $\lambda_{x_i},\delta_i,
\sigma_{e_i}, \sigma_{e_i}^{-1}$ for  $1\leq i \leq n $.

\begin{center}
\begin{tabular}{ccc}
$\lambda_{x_i}\lambda_{x_j} = q_{ij}\lambda_{x_j}\lambda_{x_i}$,
&$\delta_i\lambda_{x_j} - \lambda_{x_j}\delta_i = \delta_{ij}\sigma_{e_i}$,
&$\sigma_a\lambda_{x_i} = \beta (a,e_i) \lambda_{x_i}\sigma_a$;\\
$\delta_i \delta_j = q_{ji}\delta_j \delta_i$,
&$\sigma_a \delta_i = \beta(a,-e_i)\delta_i \sigma_a$,
&$\sigma_a\sigma_b = \sigma_b\sigma_a$.
\end{tabular}
\end{center}
for $a\in \Z^n$, where $\delta_{ij}$ denotes the Kronecker-delta.
Note that \[(\rho_{x_i}\delta_i)(x_1^{m_1}x_2^{m_2}\cdots x_n^{m_n})
=m_i(x_1^{m_1}x_2^{m_2}\cdots x_i^{m_i }\cdots x_n^{m_n}).\]

Let $\Delta$ be the subalgebra of $D_q(R)$ generated by the set
$\{ \rho_{x_i},\delta_i \mid 1\leq i \leq n \}$.
The relations among these generators are, for $1\leq i,j \leq n$,:
\begin{align}
\rho_{x_i}\rho_{x_j} &= q_{ji} \rho_{x_j}\rho_{x_i},\\
\delta_i \delta_j &= q_{ji} \delta_j \delta_i,\\
\delta_i \rho_{x_j} &=q_{ij}\rho_{x_j}\delta_i\quad \mbox{ if $i\neq j$},\\
\delta_i \rho_{x_i} -\rho_{x_i}\delta_i&=1.
\end{align}
\begin{notation}
For a multi-index $I=(i_1,\ldots,i_n)$, where each $i_j\geq0$, let $x_I$, $\delta_I$ and  $\rho_I$ denote
$x_1^{i_1}\cdots x_n^{i_n}$,
$\delta_1^{i_1}\cdots \delta_n^{i_n}$
and
 $\rho_{x_1^{i_1}}\cdots \rho_{x_n^{i_n}}$ respectively.
Further, let $\sum I = i_1+\ldots +i_n$
and $I! = \prod_{j=1}^n i_j!$.
Note that
$\delta_I (x_I) = I!$ and $\delta_I (x_J) = 0$ if any $j_r<i_r$.

Define the \emph{degree} of $\rho_I \delta_J$ to be $(I - J) \in \Z^n$.
Define the \emph{standard degree} of $\rho_I\delta_J$ to be
$\sum I + \sum J$.
When appropriate, we use the lexicographical ordering on $\Z^n$.
\end{notation}

\begin{theorem}\label{Deltasimple}
The algebra $\Delta$ is a simple, left and right Noetherian, domain of GK-dimension $2n$ with
basis $\{\rho_I \delta_J\}$.
\end{theorem}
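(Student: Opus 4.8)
The plan is to recognize $\Delta$ as an iterated skew polynomial extension of $\K$ and then invoke standard structure theory. From the relations displayed just before the theorem, the generators $\rho_{x_1},\dots,\rho_{x_n},\delta_1,\dots,\delta_n$ satisfy quantum-affine-space relations among the $\rho$'s, quantum-affine-space relations among the $\delta$'s, cross $q$-commutation $\delta_i\rho_{x_j}=q_{ij}\rho_{x_j}\delta_i$ for $i\neq j$, and the Weyl-type relation $\delta_i\rho_{x_i}-\rho_{x_i}\delta_i=1$. This is exactly the presentation of a multiparameter quantized Weyl algebra in the sense of \cite{J1} (arising from the quantum calculus of \cite{malt}). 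Concretely, I would order the generators as $\rho_{x_1},\delta_1,\rho_{x_2},\delta_2,\dots,\rho_{x_n},\delta_n$ and check that adjoining them one at a time to $\K$ gives an Ore extension at each stage: adjoining $\rho_{x_i}$ twists the previously constructed algebra by the $\K$-automorphism sending $\rho_{x_j}\mapsto q_{ji}\rho_{x_j}$, $\delta_j\mapsto q_{ij}\delta_j$ for $j<i$; adjoining $\delta_i$ twists by the automorphism $\rho_{x_j}\mapsto q_{ij}^{-1}\rho_{x_j}$, $\delta_j\mapsto q_{ji}^{-1}\delta_j$ for $j<i$ (which fixes $\rho_{x_i}$) together with the inner-type derivation determined by $\delta_i\rho_{x_i}-\rho_{x_i}\delta_i=1$ and zero on the earlier generators. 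One must verify that these maps genuinely are automorphisms/skew derivations of the algebra built so far, i.e.\ that they respect its defining relations; this is the routine but necessary bookkeeping. Having done this, $\Delta$ is an iterated Ore extension $\K[\rho_{x_1}][\delta_1;\dots]\cdots[\delta_n;\dots]$, hence by the standard results on Ore extensions (e.g.\ \cite[Theorems 1.12, 1.17]{GW} as already cited in the excerpt) it is a right and left Noetherian domain, and the monomials $\rho_I\delta_J$ form a PBW basis.

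For the GK-dimension, an iterated Ore extension of $\K$ by $2n$ variables, each of which adds $1$ to GK-dimension (Ore extensions with automorphism being affine), gives $\GK(\Delta)=2n$; alternatively the Hilbert series with respect to the standard-degree filtration is $(1-t)^{-2n}$, which immediately yields $\GK(\Delta)=2n$ and confirms that the $\rho_I\delta_J$ are linearly independent (so we have a genuine basis, not merely a spanning set).

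Simplicity is the one point that does not follow from generic iterated-Ore-extension formalism, so I expect it to be the main obstacle. The standard approach is the one used for the ordinary and quantized Weyl algebras: since the $q_{ij}$ are algebraically independent over $\Q$, no relevant parameter is a root of unity, and one shows by an induction on $n$ that a nonzero ideal $I$ of $\Delta$ must meet the central (or normal) subalgebra controlled by the commutator relations. Explicitly, take a nonzero $u\in I$ of minimal standard degree; commuting $u$ with $\rho_{x_i}$ and with $\delta_i$ and using $[\delta_i,\rho_{x_i}]=1$ lowers the degree in the appropriate variable, and the algebraic independence of the $q_{ij}$ prevents unwanted cancellations, forcing $u$ down to a nonzero scalar. (Equivalently, one inverts the normal elements to pass to a localization that is a tensor-product-like twist of Weyl algebras and quantum tori — each factor simple because the relevant parameter is not a root of unity — then shows that no nonzero ideal of $\Delta$ can be killed by this localization, again using minimality of degree.) This localization-and-degree argument mirrors the simplicity proofs already invoked in the excerpt for $D_x$ and in Lemma~\ref{TSP}, so the same machinery applies; the only care needed is to track which parameters must avoid being roots of unity and to note that algebraic independence over $\Q$ guarantees this for all of them simultaneously.
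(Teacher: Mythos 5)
Your overall route is the paper's: compare $\Delta$ with a multiparameter quantized Weyl algebra $A_n^\Lambda$, realize the latter as an iterated Ore extension of $\K$ (hence a Noetherian domain with PBW basis and, via the associated graded quantum affine space, GK-dimension $2n$), and let simplicity do the remaining work; the paper cites \cite[Theorem 6.5]{JW} for simplicity but explicitly notes, as you do, that one can instead adapt the classical Weyl-algebra argument. The one genuine gap in your write-up is the sentence ``Having done this, $\Delta$ is an iterated Ore extension.'' The algebra $\Delta$ is \emph{defined} as a subalgebra of $D_q(R)$ generated by the operators $\rho_{x_i}$ and $\delta_i$; verifying that these operators satisfy the listed relations only produces a surjection $\theta$ from the abstractly presented iterated Ore extension $A_n^\Lambda$ onto $\Delta$, not an isomorphism. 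At that stage you know the monomials $\rho_I\delta_J$ span $\Delta$ but not that they are linearly independent, and your appeal to the Hilbert series computes the Hilbert series of $A_n^\Lambda$, not of $\Delta$, so it cannot ``confirm'' independence in the operator algebra.

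The repair is an ingredient you already have, just in the wrong order: establish simplicity of $A_n^\Lambda$ first (by your degree/commutator argument, or by citing \cite{JW}), then observe that the kernel of the surjection $\theta:A_n^\Lambda\twoheadrightarrow\Delta$ is a proper two-sided ideal of a simple ring, hence zero. Once $\theta$ is known to be an isomorphism, the basis, Noetherian, domain, GK-dimension and simplicity statements all transfer to $\Delta$. With the steps reordered in this way your argument coincides with the proof in the paper.
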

\begin{proof}
Let $\Lambda=(\lambda_{ij})$ be a $n\times n$ matrix over $\K$, with non-zero entries, such that
$\lambda_{ji}=\lambda_{ij}^{-1}$ for $1\leq i,j\leq n$ and $\lambda_{i,i}=1$ for $1\leq i\leq n$.
Let $A_n^{\Lambda}$ denote the $\K$-algebra  generated by $u_i$ and $v_i$, $1 \leq i\leq n$, subject
to the relations
\begin{eqnarray*}
v_{j}v_{i}&=&\lambda_{ji}v_{i}v_{j},\quad 1\leq i,j\leq n;\\
u_{j}u_{i}&=&\lambda_{ji}u_{i}u_{j},\quad 1\leq i,j\leq n;\\
u_{j}v_{i}&=&\lambda_{ij}v_{i}u_{j},\quad 1\leq i\neq j\leq n;\\
u_{i}v_{i}-v_{i}u_{i}&=&1,\quad 1\leq i\leq n.
\end{eqnarray*}
This is an example of the higher quantized Weyl algebras studied in \cite{JW} and it is an iterated skew polynomial ring over $\K$, constructed by adjoining $u_1,v_1,\dots, u_n,v_n$ in turn, using automorphisms and skew derivations. It is therefore right and left Noetherian and has basis $\{u_1^{a_1}\dots u_n^{a_n}v_1^{b_1}\dots v_n^{b_n}|a_i,b_i\geq 0\}$. It has an obvious filtration, with $\deg u_i=\deg v_i=1$, for which the associated graded ring $\gr (A_n^{\Lambda})$ is generated by $\overline{u}_i$ and $\overline{v}_i$, $1\leq i\leq n$ subject to the relations
\begin{eqnarray*}
\overline{v}_{j}\overline{v}_{i}&=&\lambda_{ji}\overline{v}_{i}\overline{v}_{j},\quad 1\leq i,j\leq n;\\
\overline{u}_{j}\overline{u}_{i}&=&\lambda_{ji}\overline{u}_{i}\overline{u}_{j},\quad 1\leq i,j\leq n;\\
\overline{u}_{j}\overline{v}_{i}&=&\lambda_{ij}\overline{v}_{i}\overline{u}_{j},\quad 1\leq i,j\leq n.
\end{eqnarray*}
This is the coordinate algebra of a quantum space and has the same growth as the commutative polynomial algebra in $2n$ indeterminates.
The filtration is standard and finite-dimensional in the sense of \cite{McCR} and it follows from \cite[Proposition 8.1.14]{McCR} that \[\GK(A_n^{\Lambda})=\GK(\gr(A_n^{\Lambda}))=2n.\]

It is shown in \cite[Theorem 6.5]{JW} that, provided $\ch \K=0$, $A_n^{\Lambda}$ is simple for all $\Lambda$. Alternatively, it is not difficult to adapt the well-known proof of simplicity of the usual Weyl algebra $A_n$, where each $\lambda_{ij}=1$ so as to apply to $A_{n}^{\Lambda}$.

If $\lambda_{ij}=q_{ji}$,  there is a surjective $\K$-algebra homomorphism $\theta:A_n^{\Lambda}\to \Delta$ given by
$\theta(u_i)=\delta_i$ and $\theta(v_i)=\rho_{x_i}$. As $A_n^{\Lambda}$ is simple, $\theta$ is an isomorphism and the result follows.

\end{proof}

\begin{remark}
\begin{enumerate}
\item The group $\Z^n$ acts on $\Delta$ by
$\gamma \cdot \rho_{x_i} = \beta(\gamma, e_i) \rho_{x_i}$
and
$\gamma \cdot \delta_i = \beta(\gamma, e_i)^{-1} \delta_i$ so we can form the skew group algebra
$\Delta \#\Z^n$. The algebra $\Delta \# \Z^n$ can be viewed as an iterated skew Laurent polynomial extension of $\Delta$.

\item There is a homomorphism of algebras
\[
f: \Delta \# \Z^n \to D_q(R) \quad \textit{with}
\quad
f(\varphi \gamma) \mapsto \varphi \sigma_{\gamma} \quad
\textit{for } \varphi \in \Delta, \gamma \in \Z^n.
\]
\end{enumerate}
\end{remark}

\begin{theorem}\label{DZnsimple}
The algebra $\Delta \# \Z^n$ is a
left and right  Noetherian simple domain with basis $\{ \rho_I \delta_J \sigma_{\gamma} \}$ and GK-dimension $3n$.
\end{theorem}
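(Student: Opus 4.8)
By the remark preceding the theorem, $\Delta\#\Z^n$ is an iterated skew Laurent polynomial extension of $\Delta$: since $\Z^n$ is free abelian and acts on $\Delta$ through the commuting diagonal automorphisms $\alpha_i$ given by $\alpha_i(\rho_{x_j})=\beta(e_i,e_j)\rho_{x_j}$ and $\alpha_i(\delta_j)=\beta(e_i,e_j)^{-1}\delta_j$, we may adjoin $\sigma_{e_1},\dots,\sigma_{e_n}$ in turn, at the $i$-th stage extending $\alpha_i$ over the already-adjoined (and commuting) $\sigma_{e_k}$. Granting this, the assertions other than simplicity are routine. A skew Laurent extension of a domain by an automorphism is again a domain (compare the extreme coefficients of a would-be zero divisor), and a skew Laurent extension of a right (resp. left) Noetherian ring is right (resp. left) Noetherian by \cite[Theorem 1.17]{GW}; iterating $n$ times from Theorem~\ref{Deltasimple} shows $\Delta\#\Z^n$ is a left and right Noetherian domain. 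The basis statement is immediate from the definition of a skew group algebra: $\Delta\#\Z^n$ is free as a left $\Delta$-module on $\{\sigma_\gamma:\gamma\in\Z^n\}$, and $\Delta$ has $\K$-basis $\{\rho_I\delta_J\}$ by Theorem~\ref{Deltasimple}, so $\{\rho_I\delta_J\sigma_\gamma\}$ is a $\K$-basis of $\Delta\#\Z^n$.

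For the Gelfand--Kirillov dimension I would argue directly from this basis. Let $V$ be the $\K$-span of $1$, the $\rho_{x_i}$, the $\delta_i$ and the $\sigma_{e_i}$, $\sigma_{-e_i}$ ($1\le i\le n$), a finite-dimensional generating subspace. Every defining relation among the generators is homogeneous for the grading giving each $\rho_{x_i}$, $\delta_i$, $\sigma_{\pm e_i}$ degree $1$, except for $\delta_i\rho_{x_i}-\rho_{x_i}\delta_i=1$, whose inhomogeneous term has strictly lower degree, while $\sigma_{e_i}$ moves past any generator at the cost of a scalar. Hence any length-$m$ word in $V$ rewrites as a $\K$-combination of basis monomials $\rho_I\delta_J\sigma_\gamma$ with $\sum I+\sum J+\|\gamma\|_1\le m$, and conversely each such monomial lies in $V^m$. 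The number of these monomials grows like $m^{3n}$, so $\GK(\Delta\#\Z^n)=3n$.

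The substantive point is simplicity, which I would prove by a minimal-support argument. Let $J$ be a nonzero ideal and choose $0\neq z=\sum_{\gamma\in F}a_\gamma\sigma_\gamma\in J$ with all $a_\gamma\neq 0$ in $\Delta$ and $|F|$ minimal; multiplying by a suitable invertible $\sigma_{-\gamma_0}$ we may assume $\sigma_0=1$ lies in $F$, and, writing $1=\sum_k u_k a_0 v_k$ (possible as $\Delta$ is simple) and replacing $z$ by $\sum_k u_k z v_k$, we may assume $a_0=1$. For every $c\in\Delta$ the element $cz-zc=\sum_{\gamma\in F}\bigl(ca_\gamma-a_\gamma(\gamma\cdot c)\bigr)\sigma_\gamma$ lies in $J$ and has zero coefficient at $\sigma_0$, so minimality forces $cz=zc$, that is $ca_\gamma=a_\gamma(\gamma\cdot c)$ for all $c\in\Delta$ and all $\gamma\in F$. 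Fixing $\gamma\in F$, this gives $\Delta a_\gamma=a_\gamma(\gamma\cdot\Delta)=a_\gamma\Delta$, so $a_\gamma$ is a nonzero normal element of the simple ring $\Delta$, hence a unit, and $\gamma\cdot(-)$ is the inner automorphism $c\mapsto a_\gamma^{-1}ca_\gamma$ of $\Delta$. But the units of $\Delta\cong A_n^{\Lambda}$ are just the nonzero scalars (a leading-term argument, using that the associated graded ring of $A_n^{\Lambda}$ is a domain), so $\gamma\cdot(-)$ is the identity of $\Delta$; this forces $\prod_i q_{ij}^{m_i}=1$ for every $j$, where $\gamma=\sum_i m_ie_i$. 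Splitting this product according to $i<j$, $i=j$ (where $q_{jj}=1$) and $i>j$ (where $q_{ij}=q_{ji}^{-1}$) and using that the $q_{kl}$ are algebraically, hence multiplicatively, independent over $\Q$, the equation for one value of $j$ forces $m_i=0$ for all $i\neq j$, and running it for two distinct $j$ forces $\gamma=0$. Hence $F=\{0\}$ and $z=1\in J$, so $\Delta\#\Z^n$ is simple.

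The hard part is this last step: showing that for $\gamma\neq 0$ the automorphism $\gamma\cdot(-)$ of $\Delta$ is not inner. This is where the standing hypothesis that the $q_{ij}$ are algebraically independent over $\Q$ is genuinely used, and one must carry out the multiplicative-independence bookkeeping carefully, keeping track of which $q_{kl}$ with $k<l$ actually occur once the identities $q_{ji}=q_{ij}^{-1}$ and $q_{ii}=1$ are invoked, and, secondarily, pin down the unit group of $\Delta$. The remaining ingredients --- the iterated skew Laurent structure, the transfer of the domain and Noetherian properties, and the monomial count for the Gelfand--Kirillov dimension --- are standard.
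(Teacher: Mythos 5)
Your proof is correct and follows essentially the same route as the paper: the paper obtains the Noetherian domain property from the iterated skew Laurent structure and deduces simplicity from the outerness of the $\Z^n$-action via \cite[Proposition 7.8.12]{McCR}, which is exactly the minimal-support argument you carry out explicitly (outerness reducing, as in your last paragraph, to the units of $\Delta$ being scalars together with the multiplicative independence of the $q_{ij}$ with $i<j$). You have simply unwound the cited results of \cite{McCR} and supplied the monomial count for the GK-dimension, which the paper leaves as ``easy to see.''
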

\begin{proof}
By   \cite[Theorem 1.5.12 and Theorem 1.4.5]{McCR},  $\Delta \# \Z^n$ is
a right and left Noetherian domain. By the condition that the $q_{ij}$'s generate a free abelian group, $\Z^n$ acts as a group of outer automorphisms so, by \cite[Proposition 7.8.12]{McCR}, $\Delta \# \Z^n$ is simple.  Simplicity can also be established by successively applying \cite[Theorem 1.8.5]{McCR} or \cite[Theorem 1.17]{GW}.
The stated set is clearly a basis and it is easy to see that $\GK(\Delta \# \Z^n)=3n$.
\end{proof}

\begin{remark}
\begin{enumerate}
\item The algebra $\Delta \# \Z^n$ is generated by the set
	\[ \{ \rho_{x_i}, \delta_i ,\sigma_{e_i}^{\pm1}
	\mid 1\leq i \leq n \}.
	\]
	Note that $\lambda_{x_i} = \rho_{x_i} \sigma_{e_i}$ so
	$\Delta \# \Z^n$ is also generated by the set
	$\{ \lambda_{x_i}, \delta_i ,\sigma_{e_i}^{\pm1}
\mid 1\leq i \leq n \}$ or by $\{ \lambda_{x_i}, \del_i ,\sigma_{e_i}^{\pm1}
\mid 1\leq i \leq n \}$, where $\del_i$ is the left $\sigma_{-e_i}$-derivation $\sigma_{-e_i}\delta_i$.

The subalgebra $S_n$
of $D_q(R)$ generated by the set
$\{\lambda_{x_i}, \del_i \mid 1\leq i \leq n\}$
is also isomorphic to a quantized Weyl algebra of the form $A_n^\Lambda$ and hence is simple.
\item
The algebra $D_q(R)$ has a filtration $D_q^0(R)\subset D_q^1(R)\subset \ldots$.
The algebra $\Delta$ is also filtered, with filtration $\Delta_0 \subset \Delta_1 \subset \ldots$
where $\Delta_m = \K$-Span$\{ \rho_I \delta_J \mid \sum J \leq m \}$. This induces a filtration
on $\Delta \# \Z^n$ given by $\left( \Delta \# \Z^n \right) _m= \Delta_m \# \Z^n$.
\item
The following formula and lemmas will be needed in the next theorem: for $1\leq i \leq n$,
\begin{align}\label{commutator}
[\delta_1^{j_1}\delta_2^{j_2}\cdots \delta_n^{j_n} \sigma_a ,x_i]
=& j_i   \left( \prod_{i<s}q_{s,i}^{j_s} \right) \beta (a,e_i)
\delta_1^{j_1}\cdots \delta_i^{j_i-1}\cdots \delta_n^{j_n} \sigma_{a+e_i}\notag \\
&+( \beta(a,e_i) -1) \left( \prod_{1\leq s \leq n} q_{s,i}^{j_s}\right)
\rho_{x_i}\delta_1^{j_1}\delta_2^{j_2}\cdots \delta_n^{j_n} \sigma_{a+e_i}.
\end{align}
\end{enumerate}
\end{remark}
%----------------------------------------------------------------------------------------------------------------------
%----------------------------------------------------------------------------------------------------------------------
\begin{lemma} \label{Lm1:Dq-qtorus}
Let $\varphi \in D_q(R)$ be such that $[\varphi ,x_1] \in f(\Delta \# \Z^n)$ and
$[\varphi ,x_i]= 0 \quad \forall i>1$.  Then $[\varphi ,x_1]= T \sigma_{e_1}$
where $T$ is in the subalgebra of $f(\Delta \# \Z^n)$ generated by the set
$\{ \rho_{x_i} \}_{1\leq i \leq n} \cup \{ \delta_1 \}$.
\end{lemma}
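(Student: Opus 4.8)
## Proof proposal

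The plan is to analyze the general element $\varphi\in D_q(R)$ via its degree and the structure of $D^0_q(R)\cong R\#\Z^n$, exactly as in the proof of Theorem~\ref{thm:1st order}. First I would write $[\varphi,x_1]\in f(\Delta\#\Z^n)$ in the basis $\{\rho_I\delta_J\sigma_\gamma\}$ from Theorem~\ref{DZnsimple}, say $[\varphi,x_1]=\sum_{I,J,\gamma} c_{I,J,\gamma}\,\rho_I\delta_J\sigma_\gamma$. Since $\varphi$ is homogeneous (we may assume this, the argument being linear), all terms appearing in $[\varphi,x_1]$ have the same degree $d+e_1\in\Z^n$, where $d=\deg\varphi$; that is, $I-J+\gamma=d+e_1$ for every contributing term. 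I would also normalize by subtracting $\rho_{\varphi(1)}$ so that $\varphi(1)=0$, which forces $d+e_1\neq 0$-type constraints but more importantly will be used to recover $\varphi$ from its commutators at the end.

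The key step is to exploit the relations $[\varphi,x_1x_i]=q_{1i}[\varphi,x_ix_1]$ for $i>1$ together with $[\varphi,x_i]=0$. Since $[\varphi,x_i]=0$, the identity $[\varphi,x_1x_i]=[\varphi,x_1]x_i+\sigma_?(x_1)[\varphi,x_i]$-type expansion (using the derivation-like behaviour of the bracket, cf.\ the formula $[\varphi\sigma_b,r]_{a+b}=[\varphi,\sigma_b(r)]_a\sigma_b$ in Section~\ref{Prelim}) collapses to give that $[\varphi,x_1]$ must $q_{1i}$-commute with $\lambda_{x_i}=\rho_{x_i}\sigma_{e_i}$ in the appropriate skew sense. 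Writing this out on each basis monomial $\rho_I\delta_J\sigma_\gamma$ and using that $\Delta\#\Z^n$ is a domain with the stated basis (Theorem~\ref{DZnsimple}), I would extract a scalar condition: for each $i>1$ and each surviving term, $\beta(\gamma,e_i)$ together with the $q$-powers picked up from commuting $\delta_i^{j_i}$ and $\rho_{x_i}$ past $x_i$ must match $q_{1i}$. As in Case~1 of Theorem~\ref{thm:1st order}, since the $q_{ij}$ are algebraically independent over $\Q$ and $n\geq 3$, these constraints for the two indices $i=2,3$ (or any two distinct indices $>1$) force $j_i=0$ for all $i>1$ (so $\delta_J=\delta_1^{j_1}$), and then force $\gamma=e_1$. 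This is the step I expect to be the main obstacle: tracking the $q$-power bookkeeping through formula~\eqref{commutator} and organizing the vanishing so that algebraic independence can be applied cleanly to kill all but the $\delta_1$, $\rho_{x_i}$, $\sigma_{e_1}$ terms.

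Once $\gamma=e_1$ and $\delta_J=\delta_1^{j_1}$ are established, every surviving basis element has the form $\rho_I\delta_1^{j_1}\sigma_{e_1}$, so $[\varphi,x_1]=T\sigma_{e_1}$ where $T=\sum c_{I,j_1}\rho_I\delta_1^{j_1}$ lies in the subalgebra generated by $\{\rho_{x_i}\}_{1\leq i\leq n}\cup\{\delta_1\}$, which is exactly the claim. I would close by remarking that, as in Theorem~\ref{thm:1st order}, there is no loss in assuming $\varphi$ homogeneous since $D^1_q(R)$ and $f(\Delta\#\Z^n)$ are $\Z^n$-graded and the hypothesis $[\varphi,x_i]=0$ for $i>1$ and $[\varphi,x_1]\in f(\Delta\#\Z^n)$ is preserved componentwise. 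A minor point to handle is the possibility that some $r_i=\deg\varphi$ has a negative coordinate, which as in Case~1 of the earlier proof restricts $T$ further but does not affect the stated conclusion.
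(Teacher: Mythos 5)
Your proposal is correct and follows essentially the same route as the paper: both use $[\varphi,x_i]=0$ to collapse $[\varphi,x_1x_i]=q_{1i}[\varphi,x_ix_1]$ into the skew-commutation $[\varphi,x_1]x_i=q_{1i}x_i[\varphi,x_1]$, expand in the basis $\{\rho_I\delta_J\sigma_a\}$ of $f(\Delta\#\Z^n)$, and use linear independence together with the algebraic independence of the $q_{ij}$ (and $n\geq 3$) to force $j_i=0$ for all $i>1$ and $a=e_1$. The only cosmetic difference is that the paper first reduces to a single basis term $\alpha\delta_J\sigma_a$ via auxiliary operators $\psi_{I,J,a}$, whereas you run the same argument termwise on the full sum.
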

%----------------------------------------------------------------------------------------------------------------------
%----------------------------------------------------------------------------------------------------------------------
\begin{proof}
Let $[\varphi ,x_1] = \sum_{I,J,a}\alpha_{I,J,a}\rho_I\delta_J\sigma_a$.
Without loss of generality assume that
$\varphi$ is homogeneous.
For each tuple $(I,J,a)$ there exists $\psi_{I,J,a} \in \grHom(R, R)$ such that
 $[\psi_{I,J,a}, x_1] = \alpha_{I,J,a}\delta_J\sigma_a$
(that is, $[\rho_I\psi_{I,J,a}, x_1] = \alpha_{I,J,a}\rho_I\delta_J\sigma_a$) and
$[\psi_{I,J,a},x_i]=0 \quad \forall i>1$.

We may therefore assume that $[\varphi ,x_1]= \alpha \delta_J \sigma_a$
and $[\varphi ,x_i]=0 \quad \forall i>1$.  Now, for every $i>1$, since
$[\varphi ,x_1x_i] = q_{1i}[\varphi ,x_ix_1]$, we have
$\alpha \delta_J \sigma_a x_i = q_{1i}\alpha  x_i \delta_J \sigma_a$. That is,
there exist $ c_1,c_2\in \K^*$ such that
\[
(\beta (a,e_i) -q_{1i}) \alpha c_1\rho_{x_i} \delta_J \sigma_{a+e_i}
+c_2 \alpha j_i \delta_{J-e_i}\sigma_{a+e_i}=0.
\]
By linear independence of $\rho_{x_i} \delta_J \sigma_{a+e_i}$ and
$\delta_{J-e_i}\sigma_{a+e_i}$, if $\alpha \neq 0$
then $\beta (a,e_i) =q_{1,i}$. That is, $a-e_1 \in \Z e_i$ and $j_i =0$ for every $i>1$.
Thus $a =e_1$ and  $j_i =0$ for every $i>1$.
\end{proof}
%--------------------------------------------------------------------------------------------------------
%--------------------------------------------------------------------------------------------------------
\begin{lemma} \label{Lm2:Dq-qtorus}
Suppose $\varphi \in D_q(R)$ is such that $[\varphi ,x_i] =0$ and for some $j\neq i$
$[\varphi ,x_j] = \sum_{I,J,a}\alpha_{I,J,a} \rho_I \delta_J \sigma_a \neq 0$.
Then there is an $I_0$ such that whenever $\alpha_{I_0, J,a}\neq 0$, we have
$a-e_j \in \Z e_i$ and $j_i =0$.
\end{lemma}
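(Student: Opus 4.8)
Suppose $\varphi\in D_q(R)$ is as in the statement, with $R=R_n$ and $n\ge 3$. The plan is to turn the two hypotheses into a single operator identity and then extract both conclusions by comparing coefficients in the basis $\{\rho_I\delta_J\sigma_a\}$ of $f(\Delta\#\Z^n)$; this set really is a basis of $f(\Delta\#\Z^n)$ because, by Theorem~\ref{DZnsimple}, $\Delta\#\Z^n$ is simple with basis $\{\rho_I\delta_J\sigma_\gamma\}$, so the nonzero homomorphism $f$ is injective. Write $\varphi_j=[\varphi,x_j]$. Since the ordinary commutator $[\varphi,-]$ is a derivation, $[\varphi,\lambda_{x_i}\lambda_{x_j}]=[\varphi,\lambda_{x_i}]\lambda_{x_j}+\lambda_{x_i}[\varphi,\lambda_{x_j}]$, and the relation $x_ix_j=q_{ij}x_jx_i$ in $R_n$ gives $\lambda_{x_i}\lambda_{x_j}=q_{ij}\lambda_{x_j}\lambda_{x_i}$; substituting $[\varphi,x_i]=0$ into $[\varphi,\lambda_{x_i}\lambda_{x_j}]=q_{ij}[\varphi,\lambda_{x_j}\lambda_{x_i}]$ collapses it to $\varphi_j\lambda_{x_i}=q_{ji}\lambda_{x_i}\varphi_j$, equivalently
\[
[\varphi_j,\lambda_{x_i}]=(q_{ji}-1)\,\lambda_{x_i}\varphi_j .
\]

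The next step is to expand both sides of this identity in the basis $\{\rho_I\delta_J\sigma_a\}$. Since $\lambda_{x_i}$ commutes with every $\rho_s$, we have $[\rho_I\delta_J\sigma_a,\lambda_{x_i}]=\rho_I\,[\delta_J\sigma_a,x_i]$, and $[\delta_J\sigma_a,x_i]$ is given by \eqref{commutator} with $\delta_J=\delta_1^{j_1}\cdots\delta_n^{j_n}$. Thus each term $\rho_I\delta_J\sigma_a$ of $\varphi_j$ contributes to $[\varphi_j,\lambda_{x_i}]$ a term whose $\rho$-exponent is $I$, present exactly when $j_i>0$ and then with a nonzero coefficient, together with a term whose $\rho$-exponent is $I+e_i$ and whose coefficient is a nonzero scalar times $\beta(a,e_i)-1$. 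On the other side, $\lambda_{x_i}=\rho_{x_i}\sigma_{e_i}$ shows that $\lambda_{x_i}\varphi_j$ contributes, per term $\rho_I\delta_J\sigma_a$, only a term of $\rho$-exponent $I+e_i$ with coefficient a nonzero scalar times $\beta(a,e_i)$. Collecting the coefficient of a fixed basis element $\rho_M\delta_N\sigma_b$ then yields, for all $M,N,b$, a relation of the form
\[
\alpha_{M,\,N+e_i,\,b-e_i}\,P_{M,N,b}+\alpha_{M-e_i,\,N,\,b-e_i}\,Q_{M,N,b}=0 ,
\]
where $P_{M,N,b}$ is always nonzero (it is $n_i+1$ times nonzero scalars) and $Q_{M,N,b}$ equals a nonzero scalar times $\beta(b-e_i,e_i)-q_{ji}$ when $M-e_i\in\N^n$ and is absent otherwise; the first term is the ``lower'' contribution and the second the ``top'' contribution.

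Now let $I_0$ be the lexicographically least multi-index occurring among the $\rho$-exponents of the finitely many nonzero terms of $\varphi_j$. Taking $M=I_0$ makes the second term above vanish ($I_0-e_i$ is either not in $\N^n$ or is lexicographically below $I_0$, so the corresponding $\alpha$ is $0$); hence $\alpha_{I_0,\,N+e_i,\,b-e_i}\,P_{I_0,N,b}=0$ for all $N,b$, and since $P_{I_0,N,b}\neq0$ we get $\alpha_{I_0,J,a}=0$ whenever $j_i>0$, which is the conclusion $j_i=0$. For the other conclusion, fix $a$ with $\beta(a,e_i)\neq q_{ji}$ and put $b=a+e_i$; applying the displayed relation with $M=I_0+(t+1)e_i$ and $N=K+te_i$ for $t=0,1,2,\dots$ (with $K$ an arbitrary multi-index) makes $Q\neq0$ and gives the recursion
\[
\alpha_{I_0+te_i,\;K+te_i,\;a}=-\,\frac{P}{Q}\;\alpha_{I_0+(t+1)e_i,\;K+(t+1)e_i,\;a}.
\]
Only finitely many $\rho$-exponents occur in $\varphi_j$, so this chain is eventually zero, and descending induction on $t$ gives $\alpha_{I_0,K,a}=0$. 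Hence a nonzero $\alpha_{I_0,J,a}$ forces $\beta(a,e_i)=q_{ji}=\beta(e_j,e_i)$, i.e. $\beta(a-e_j,e_i)=1$, and since the $q_{k\ell}$ are algebraically — hence multiplicatively — independent over $\Q$ this forces $a-e_j\in\Z e_i$, as required.

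The routine but delicate part is the second paragraph: using \eqref{commutator} together with the relations among $\rho_{x_i},\delta_i,\sigma_a$ to put both sides in basis form and, in particular, to check that the coefficient $Q_{M,N,b}$ of the ``top'' term is genuinely proportional to $\beta(b-e_i,e_i)-q_{ji}$ (the scalar factors from repositioning $\rho_{x_i}$ past the $\delta$'s and $\sigma$'s conspire to cancel). The ``lower'' correction terms are exactly what prevents reading off $a-e_j\in\Z e_i$ from a single coefficient comparison — they contaminate the $\rho$-exponent-$(I_0+e_i)$ component with the unknown coefficients $\alpha_{I_0+e_i,\cdot,\cdot}$ — which is why the finite descending chain above is needed rather than a one-step argument.
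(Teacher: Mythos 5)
Your proof is correct, and the engine is the same as the paper's: expand the single identity $[\varphi ,x_ix_j]=q_{ij}[\varphi ,x_jx_i]$ (using $[\varphi,x_i]=0$) in the basis $\{\rho_I\delta_J\sigma_a\}$ of $f(\Delta\#\Z^n)$ via \eqref{commutator}, and compare coefficients at an extremal $\rho$-exponent; your two-term recurrence $\alpha_{M,N+e_i,b-e_i}P+\alpha_{M-e_i,N,b-e_i}Q=0$ with $Q$ a nonzero multiple of $\beta(b-e_i,e_i)-q_{ji}$ is exactly the coefficient-of-$\rho_M\delta_N\sigma_b$ content of the paper's equation \eqref{eqn:Lemma2forisothm} (and your ``delicate'' cancellation does check out: both the top term of $\varphi_j\lambda_{x_i}$ and the term $q_{ji}\lambda_{x_i}\varphi_j$ carry the common factor $\bigl(\prod_s q_{si}^{j_s}\bigr)\bigl(\prod_{k>i}q_{ik}^{i_k}\bigr)$, leaving $\beta(a,e_i)-q_{ji}$). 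The organization is dual to the paper's: there $I_0$ is taken lexicographically \emph{greatest}, the condition $a-e_j\in\Z e_i$ is read off in one step from the uncontaminated top exponent $I_0+e_i$, and $j_i=0$ is then extracted from the $\rho_{I_0}$-coefficient by a matching argument; you take $I_0$ \emph{least}, get $j_i=0$ in one step from the uncontaminated bottom exponent, and recover the $a$-condition by running the recurrence up the $e_i$-direction until it exits the finite support. Your recurrence in fact proves the $a$-condition for \emph{every} occurring index $I$, not just $I_0$; and once that is known, $Q$ vanishes on every nonzero term, so the same recurrence forces $j_i=0$ at every index as well. This is worth noting because the lemma as stated is only an existence claim and your minimal $I_0$ is a legitimate witness, but the later applications (Lemma~\ref{Lm3:Dq-qtorus}, Corollary~\ref{Cor1:Dq-qtorus}) invoke the conclusion for the lexicographically greatest index; your argument delivers that too, with the one extra observation above.
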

%--------------------------------------------------------------------------------------------------------
%--------------------------------------------------------------------------------------------------------
\begin{proof}
To avoid confusion with several indices, we show that if
$[\varphi ,x_1]=0$ and $[\varphi ,x_2]=\sum_{I,J,a}\alpha_{I,J,a} \rho_I \delta_J \sigma_a \neq 0$
then there is an $I_0$ such that whenever $\alpha_{I_0, J,a}\neq 0$, we have
$a-e_2 \in \Z e_1$ and $j_1 =0$. The general case can be inferred.

Since $[\varphi ,x_1x_2] = q_{12}[\varphi ,x_2x_1]$, we have
\begin{align*}
x_1\sum_{I,J,a}\alpha_{I,J,a} \rho_I \delta_J \sigma_a
            =& q_{12}\left( \sum_{I,J,a}\alpha_{I,J,a} \rho_I \delta_J \sigma_a \right) x_1\\
	=& x_1 \sum_{I,J,a}\beta (a, e_1) q_{12} \alpha_{I,J,a} \rho_I \delta_J \sigma_a\\
           &+ \sum_{I,J,a}\beta (a,e_1) q_{12}\alpha_{I,J,a} c_J j_1\rho_I \delta_{J-e_1} \sigma_{a+e_1},
\end{align*}
for some $c_J \in \K^*$.
Therefore, by setting $q_{a1}:= \beta (a,e_1)$ we have
\begin{align*}
x_1\sum_{I,J,a}(1-q_{a1}q_{12})\alpha_{I,J,a} \rho_I \delta_J \sigma_a
   &= \sum_{I,J,a}q_{a1} q_{12}\alpha_{I,J,a} c_J j_1\rho_I \delta_{J-e_1} \sigma_{a+e_1}.
	\end{align*}
Using the fact that $\lambda_{x_1} = \rho_{x_1} \sigma_{e_1}$ there exist $d_J \in \K^*$ such that
\begin{equation}\label{eqn:Lemma2forisothm}
\sum_{I,J,a}(1-q_{a1}q_{12})\alpha_{I,J,a}d_J \rho_{I+e_1} \delta_J \sigma_{a+e_1}
 =   \sum_{I,J,a}q_{a1} q_{12}\alpha_{I,J,a} c_J j_1\rho_I \delta_{J-e_1} \sigma_{a+e_1}.
\end{equation}
Let $I_0$ be the greatest index with respect to lexicographical ordering such that
$\alpha_{I_0, J,a} \neq 0$ for some $J,a$. Then, the equation \eqref{eqn:Lemma2forisothm}
implies that $q_{a1} = q_{21}$ whenever $\alpha_{I_0, J,a} \neq 0$; that is,
whenever $\alpha_{I_0, J,a} \neq 0$ we have $q - e_2 \in \Z e_1$.

Moreover, we have
\begin{align*}
&\rho_{I_0} \left( \sum_{J,a}(1-q_{a1}q_{12})\alpha_{I_0-e_1,J,a}d_J \delta_J \sigma_{a+e_1} \right) \\
&+
\sum_{I<I_0, I\neq I_0 -e_1}
\rho_I  \left( \sum_{J,a}(1-q_{a1}q_{12})\alpha_{I_-e_1,J,a}d_J \delta_J \sigma_{a+e_1} \right) \\
=&
\rho_{I_0} \left( \sum_{J,a}q_{a1} q_{12}\alpha_{I_0,J,a} c_J j_1\delta_{J-e_1} \sigma_{a+e_1} \right) \\
&+
\sum_{I<I_0}
\rho_{I} \left( \sum_{J,a}q_{a1} q_{12}\alpha_{I,J,a} c_J j_1\delta_{J-e_1} \sigma_{a+e_1} \right)
\end{align*}
Equating the coefficients of $\rho_{I_0}$ we get
\begin{equation}\label{equation2}
\sum_{J,a}(1-q_{a1}q_{12})\alpha_{I_0-e_1,J,a}d_J \delta_J \sigma_{a+e_1} =
\sum_{J,a}q_{a1} q_{12}\alpha_{I_0,J,a} c_J j_1\delta_{J-e_1} \sigma_{a+e_1}.
\end{equation}
Recall that $\alpha_{I_0,J,a}\neq 0$ implies that $a-e_2 \in \Z e_1$. Hence,
every $a\in \Z^n$ in the left hand side of the above equation \eqref{equation2}
is  such that $a-e_2 \in \Z e_1$.
That is, $q_{a1}=q_{21}$. In other words, the left hand side of equation \eqref{equation2} is zero.
The right hand side of equation \eqref{equation2} is therefore zero. That is, whenever
$\alpha_{I_0,J,a}\neq 0$, we get $j_1 =0$.

\end{proof}
%----------------------------------------------------------------------------------------------------------------------
%----------------------------------------------------------------------------------------------------------------------
\begin{corollary}\label{Cor1:Dq-qtorus}
If homogeneous $\varphi \in D_q(R)$ is such that $[\varphi ,x_i]=[\varphi ,x_j]=0$ for some $i\neq j$
and $[\varphi ,x_k]\in f(\Delta \# \Z^n)$ for all $k\neq i,j$. Then $\varphi \in f(\Delta \# \Z^n)$.
\end{corollary}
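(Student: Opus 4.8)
The plan is to peel off from $\varphi$ successive pieces lying in $\Delta\subseteq f(\Delta\#\Z^n)$ until what remains centralises every $\lambda_{x_k}$; since an operator on $R$ commuting with all left multiplications $\lambda_r$ is the right multiplication by its value at $1$, it then lies in $\Delta$, and hence $\varphi\in f(\Delta\#\Z^n)$.

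After relabelling the variables I may assume $i=1$, $j=2$, so $[\varphi,x_1]=[\varphi,x_2]=0$ and $w_k:=[\varphi,x_k]\in f(\Delta\#\Z^n)$ for $3\le k\le n$. I would process $k=3,4,\dots,n$ in turn, maintaining the invariant that, entering the $k$th stage, $[\varphi,x_\ell]=0$ for all $\ell<k$, while $[\varphi,x_{k'}]\in f(\Delta\#\Z^n)$ for all $k'\ge k$, and $\varphi$ is still homogeneous of its original degree $d$. At the $k$th stage, if $w_k\ne 0$, write $w_k=\sum_{I,J,a}\alpha_{I,J,a}\rho_I\delta_J\sigma_a$ and let $I_0$ be the lexicographically greatest $\rho$-index occurring. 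Since $[\varphi,x_\ell]=0$ for every $\ell<k$, Lemma~\ref{Lm2:Dq-qtorus} applies with the pair $(\ell,k)$ for each such $\ell$; as $n\ge 3$ there are at least two such indices $\ell$, so intersecting the conclusions forces $a=e_k$ and the vanishing of the first $k-1$ components of $J$ for every nonzero $\alpha_{I_0,J,a}$. Homogeneity of $w_k$ (degree $d+e_k$) then pins $J$ to the single value $J_0=I_0-d-e_k$, so the leading part of $w_k$ is a single monomial $\alpha\,\rho_{I_0}\delta_{J_0}\sigma_{e_k}$ in which $\delta_{J_0}$ involves only $\delta_k,\dots,\delta_n$.

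To cancel this monomial I would use the relations $[\delta_m,\lambda_{x_k}]=\delta_{mk}\sigma_{e_k}$ and $\sigma_{e_k}\delta_m=q_{km}^{-1}\delta_m\sigma_{e_k}$: a short computation gives $[\rho_{I_0}\delta_{J_0+e_k},\lambda_{x_k}]=c\,\rho_{I_0}\delta_{J_0}\sigma_{e_k}$ for an explicit nonzero scalar $c$, so some scalar multiple $\psi$ of $\rho_{I_0}\delta_{J_0+e_k}$ lies in $\Delta$ and has $[\psi,x_k]$ equal to the leading monomial of $w_k$. Because $J_0+e_k$ is supported on $\{k,\dots,n\}$, the element $\psi$ involves only $\rho$'s and the $\delta_m$ with $m\ge k$, so $[\psi,x_\ell]=0$ for $\ell<k$ and $[\psi,x_{k'}]\in\Delta$ for $k'>k$. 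Hence replacing $\varphi$ by $\varphi-\psi$ preserves the invariant and strictly shortens $w_k$, removing exactly its leading monomial; iterating finitely often makes $w_k=0$, which passes the invariant to the $(k+1)$st stage.

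After the $n$th stage the current operator differs from the original $\varphi$ by a sum of elements of $\Delta$ and commutes with every $\lambda_{x_k}$, hence with every $\lambda_r$ ($r\in R$), so it equals $\rho_s$, where $s$ is its value at $1$; as $\rho_s\in\Delta$, the original $\varphi$ lies in $f(\Delta\#\Z^n)$. The point needing care is precisely the middle step: the correction $\psi$ killing $[\varphi,x_k]$ must not revive $[\varphi,x_1],\dots,[\varphi,x_{k-1}]$, and this is why one must apply Lemma~\ref{Lm2:Dq-qtorus} simultaneously to all pairs $(\ell,k)$ with $\ell<k$ and process the variables in increasing order, which together guarantee that $\psi$ carries no $\sigma$ and no $\delta_1,\dots,\delta_{k-1}$.
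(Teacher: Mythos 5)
Your proof is correct and follows essentially the same route as the paper: apply Lemma~\ref{Lm2:Dq-qtorus} with each vanishing commutator to pin the lex-leading term of $[\varphi,x_k]$ to a single monomial $\alpha\,\rho_{I_0}\delta_{J_0}\sigma_{e_k}$ with $\delta_{J_0}$ free of $\delta_1,\dots,\delta_{k-1}$, subtract a suitable multiple of $\rho_{I_0}\delta_{J_0+e_k}\in\Delta$ to cancel it without reviving the earlier commutators, and induct. Your write-up merely makes explicit the invariant and the termination argument that the paper compresses into ``proceeding similarly'' and ``continuing by induction.''
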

%----------------------------------------------------------------------------------------------------------------------
%----------------------------------------------------------------------------------------------------------------------
\begin{proof}
Without loss of generality assume that $[\varphi ,x_1]=[\varphi ,x_2]=0$
and $[\varphi ,x_3] \in f(\Delta \# \Z^n)$.
Following the proof of Lemma \ref{Lm2:Dq-qtorus} the multi-index $I_0$ of the highest order in lexicographical
ordering and multi-index $J_0=(0,0,j_3^0,\ldots ,j_n^0)$  are such that
\[
[\varphi ,x_3] =\rho_{I_0} \delta_{J_0}\left( \sum_{m\in \Z }\alpha_{m}  \sigma_{e_3+me_1} \right) +
                     \sum_{I< I_0} \rho_{I}  \delta_J \left( \sum_{a\in \Z^n}\alpha_{a } \sigma_{a} \right)
\]
and
\[
[\varphi ,x_3] =\rho_{I_0} \delta_{J_0}\left( \sum_{s\in \Z }\gamma_{s}  \sigma_{e_3+se_2} \right) +
                     \sum_{I< I_0} \rho_{I}  \delta_J \left( \sum_{a\in \Z^n}\gamma_{a } \sigma_{a} \right)
\]
for some $\alpha_m, \alpha_a \gamma_m, \gamma_a \in \K$.
Therefore, we have
\[
[\varphi ,x_3] = \alpha \rho_{I_0} \delta_{J_0}\sigma_{e_3}+
                     \sum_{I< I_0} \rho_{I}  \delta_J \left( \sum_{a\in \Z^n}\gamma_{a } \sigma_{a} \right)
\]
for some $\alpha \in \K$.
Since $J_0=(0,0,j_3^0,\ldots ,j_n^0)$, we have
\[
[ \varphi - \alpha \rho_{I_0}\delta_{J_0+e_3} ,x_1] = [ \varphi - \alpha \rho_{I_0}\delta_{J_0+e_3} ,x_2]=0
\]
and
$[ \varphi - \alpha \rho_{I_0}\delta_{J_0+e_3} ,x_3] =   \sum_{I< I_0} \rho_{I}  \delta_J \left( \sum_{a\in \Z^n}\gamma_{a } \sigma_{a} \right)$.
Proceeding similarly, there is a $\psi \in f(\Delta \# \Z^n)$ such that
\[
[\varphi - \psi ,x_1]=[\varphi -\psi ,x_2]= [\varphi -\psi ,x_3] =0.
\]
Continuing by induction, we prove the corollary.
\end{proof}
%------------------------------------------------------------------------------------------------------------------
%------------------------------------------------------------------------------------------------------------------
\begin{lemma}\label{Lm3:Dq-qtorus}
Suppose homogeneous $\varphi \in D_q(R)$ is such that $[\varphi ,x_i] =0$ and  for some
$j\neq i$,
$[\varphi ,x_j] = \sum_{I,J,a}\alpha_{I,J,a} \rho_I \delta_J \sigma_a \neq 0$.
Then there exists a  $\psi \in f (\Delta \# \Z^n)$ such
that $[\varphi  - \psi ,x_i ] = [\varphi - \psi ,x_j ] = 0$.
\end{lemma}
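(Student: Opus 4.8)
The strategy is to remove, one $\Z^n$-graded layer at a time, the ``top'' of $\omega:=[\varphi,x_j]$ by subtracting explicit monomials from $f(\Delta\#\Z^n)$ chosen, via the commutator formula \eqref{commutator}, so as to also kill the $x_i$-commutator, and then to induct on the finite, lexicographically ordered set of $\rho$-multi-indices occurring in $\omega$. Since $\varphi$ is homogeneous, $\omega$ is homogeneous, say of degree $c=\deg\varphi+e_j$, so in the given expansion $\omega=\sum\alpha_{I,J,a}\rho_I\delta_J\sigma_a$ the multi-index $J$ is forced by $I$ via $J=I-c$; write $I_0$ for the lexicographically largest $\rho$-index that occurs and $J_0:=I_0-c$. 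From $[\varphi,x_i]=0$ together with $[\varphi,x_ix_j]=q_{ij}[\varphi,x_jx_i]$ one gets the relation $x_i\omega=q_{ij}\omega x_i$, exactly as in the proof of Lemma~\ref{Lm2:Dq-qtorus}; that lemma then tells us that $(J_0)_i=0$ and that the only $\sigma$-exponents occurring at $\rho$-level $I_0$ have the form $e_j+me_i$ with $m\in\Z$.

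Now the peeling step. For the exponent $e_j$ (that is, $m=0$) I would subtract a suitable scalar multiple $\psi_0$ of $\rho_{I_0}\delta_{J_0+e_j}\sigma_0$: by \eqref{commutator}, $[\psi_0,x_i]=0$ because there the first summand has coefficient $(J_0+e_j)_i=(J_0)_i=0$ and the second has coefficient $\beta(0,e_i)-1=0$, while $[\psi_0,x_j]$ is a nonzero scalar multiple of $\rho_{I_0}\delta_{J_0}\sigma_{e_j}$ with no higher-$\rho$ tail, the tail being suppressed by the factor $\beta(0,e_j)-1=0$. For each exponent $e_j+me_i$ with $m\neq0$ I would instead use the \emph{second} summand of \eqref{commutator}: subtract a scalar multiple $\psi_m$ of $\rho_{I_0-e_j}\delta_{J_0}\sigma_{me_i}$ (legitimate provided $(I_0)_j\ge 1$). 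Then $[\psi_m,x_i]=0$ since $(J_0)_i=0$ and $\beta(me_i,e_i)=q_{ii}^{m}=1$, while $[\psi_m,x_j]$ contributes the wanted term $\rho_{I_0}\delta_{J_0}\sigma_{e_j+me_i}$ with the nonzero coefficient $\beta(me_i,e_j)-1=q_{ij}^{m}-1$ together with one further term living at the strictly smaller $\rho$-level $I_0-e_j$. Choosing the scalars so that $[\psi,x_j]$, with $\psi:=\psi_0+\sum_{m\neq0}\psi_m\in f(\Delta\#\Z^n)$, reproduces the level-$I_0$ part of $\omega$, we obtain $\psi$ homogeneous of degree $\deg\varphi$ with $[\varphi-\psi,x_i]=0$ and with $[\varphi-\psi,x_j]$ supported on $\rho$-levels strictly below $I_0$. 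As the $\rho$-support of $[\varphi,x_j]$ is finite, applying Lemma~\ref{Lm2:Dq-qtorus} to $\varphi-\psi$ and iterating terminates and produces the required $\psi$.

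The main obstacle is the boundary case $(I_0)_j=0$ --- equivalently $(J_0)_j=0$ and $(\deg\varphi)_j=-1$ --- in which $\rho_{I_0-e_j}$ is not a monomial and the device for the exponents $e_j+me_i$, $m\neq0$, is unavailable. Here I expect one must instead prove that only the exponent $e_j$ can occur at level $I_0$: this should come out of pushing the analysis of $x_i\omega=q_{ij}\omega x_i$ one step further, comparing the coefficients at $\rho$-levels $I_0$ and $I_0-e_i$, and using the algebraic independence of the $q_{st}$, in the same spirit as the proof of Lemma~\ref{Lm2:Dq-qtorus} and of Case~1 in the proof of Theorem~\ref{thm:1st order}. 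With that settled, the remaining points --- that each subtraction preserves $[\,\cdot\,,x_i]=0$ and strictly lowers the lexicographic maximum of the $\rho$-support, so that the induction closes with $[\varphi-\psi,x_i]=[\varphi-\psi,x_j]=0$ --- are routine.
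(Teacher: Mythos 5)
Your main peeling step coincides with the paper's: for the exponent $e_j$ you subtract a multiple of $\rho_{I_0}\delta_{J_0+e_j}$, and for the exponents $e_j+me_i$, $m\neq 0$, a multiple of $\rho_{I_0-e_j}\delta_{J_0}\sigma_{me_i}$, which is exactly the published construction, including the observation that these corrections have vanishing commutator with $x_i$ and strictly lower the lexicographic maximum of the $\rho$-support. The genuine gap is in the boundary case $(I_0)_j=0$, and the route you sketch there cannot be made to work. You propose to force $a=e_j$ at level $I_0$ by pushing the analysis of the single relation $x_i\omega=q_{ij}\omega x_i$ further and comparing coefficients at levels $I_0$ and $I_0-e_i$. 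But a term $\rho_{I_0}\delta_{J_0}\sigma_{e_j+me_i}$ with $(J_0)_i=0$ satisfies that relation identically for \emph{every} $m\in\Z$: writing the relation as
\[
\lambda_{x_i}\sum\bigl(1-q_{ij}\beta(a,e_i)\bigr)\alpha_{I,J,a}\,\rho_I\delta_J\sigma_a
=q_{ij}\sum\beta(a,e_i)\,\alpha_{I,J,a}\,c_J\,j_i\,\rho_I\delta_{J-e_i}\sigma_{a+e_i},
\]
the term in question contributes nothing to the right side because $(J_0)_i=0$, and nothing to the left side because $1-q_{ij}\beta(e_j+me_i,e_i)=1-q_{ij}q_{ji}q_{ii}^{\,m}=0$ independently of $m$. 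So no refinement of that one identity, at any $\rho$-level, can distinguish $m=0$ from $m\neq0$; the information simply is not there.

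The paper closes this case with genuinely new input, namely a third variable, and this is precisely where $n\geq 3$ is used. If $[\varphi,x_k]=0$ for all $k\neq j$, then Lemma~\ref{Lm1:Dq-qtorus} (with indices permuted) already gives $[\varphi,x_j]=T\sigma_{e_j}$, so only $a=e_j$ occurs. Otherwise one picks $k\notin\{i,j\}$ with $[\varphi,x_k]\neq 0$ and exploits the identity $[\varphi,x_jx_k]=q_{jk}[\varphi,x_kx_j]$, which couples the top $\rho$-levels $I_0$ of $[\varphi,x_j]$ and $P_0$ of $[\varphi,x_k]$: a surviving exponent $e_j+m_0e_i$ with $m_0\neq0$ forces a matching of monomials $I_0+e_k=P_0+e_j$, which is incompatible with $(I_0)_j=0$. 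You should replace your proposed coefficient comparison by this three-variable argument (or supply some equivalent use of a $k\neq i,j$); the rest of your outline — the appeal to Lemma~\ref{Lm2:Dq-qtorus} for the shape of the top level, the explicit correcting monomials, and the descending induction on the $\rho$-support — agrees with the paper and is sound.
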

%------------------------------------------------------------------------------------------------------------------
%------------------------------------------------------------------------------------------------------------------
\begin{proof}
Without loss of generality, we may assume that  $[\varphi ,x_1] =0$ and
$[\varphi ,x_2] = \sum_{I,J,a}\alpha_{I,J,a} \rho_I \delta_J \sigma_a \neq 0$.
Following the proof of Lemma \ref{Lm2:Dq-qtorus} the multi-index $I_0$ of the highest order in lexicographical
ordering is such that
$\alpha_{I_0,J,a}\neq 0$ implies that $j_1=0$ and $a=e_2 +n_a e_1$ for some $n_a \in \Z$ dependent
on $a$. Thus, there exist $\alpha_m , \alpha_a \in \K^*$ such that
\[
[\varphi ,x_2] =\rho_{I_0} \delta_{J_0}\left( \sum_{m\in \Z }\alpha_{m}  \sigma_{e_2+me_1} \right) +
                     \sum_{I< I_0} \rho_{I}  \delta_J \left( \sum_{a\in \Z^n}\alpha_{a } \sigma_{a} \right).
\]
Suppose degree of $\varphi$ is $T=(t_1,t_2,\ldots, t_n)$ and $I_0 =(i_1^0, i_2^0, \ldots ,i_n^0)$.
Then $J_0 = I_0 -e_2 - T$.
Since $\delta_1$ is not a factor of $\delta_{J_0}$, we have $t_1 = i_1^0 \geq 0$.
Thus, for any $I<I_0$ for which $\alpha_a \neq 0$ in the above sum, we have $I=(i_1,i_2,\ldots , i_n)$
with $i_1 = i_1^0 \geq 0$ (and $\del_1$ is not a factor of any of the corresponding $\del_J$). That is, $\rho_{x_1}^{t_1}$ is a factor of $[\varphi ,x_2]$.  Similarly,
$\rho_{x_1}^{t_1}$ is a factor of $[\varphi ,x_i]$ for every $i\geq 2$. If we prove the lemma for homogeneous
$\psi$ with degree $(0,t_2, \ldots ,t_n)$ then the general case is a multiple of $\rho_{x_1}^{t_1}$. So without
loss of generality assume that $t_1=i_1^0=0$ and that $\del_1$ is not a factor of any of $J_0$ and $J$
corresponding to those $I$ such that $\alpha_a \neq 0$ in the above equation and any
commutator equation [$\varphi ,x_i]$ for $i\geq 2$.

We now show that there exists $\psi \in f( \Delta \# \Z^n)$ such that
$[\varphi - \psi ,x_1] = [\varphi - \psi, x_2]=0$. It is enough to show that there exists $\psi \in f( \Delta \# \Z^n)$ such that
$[\varphi - \psi ,x_1] = 0$ and
\[
[\varphi - \psi, x_2]= \sum_{I< I_0} \rho_{I}  \delta_J \left( \sum_{a\in \Z^n}\gamma_{a } \sigma_{a} \right)
\]
for some $\gamma_a \in \K$.
In other words, we need to show the existence of $\psi \in f( \Delta \# \Z^n)$ such that
$[\psi ,x_1]=0$ and
\[
[\psi ,x_2] = \rho_{I_0} \delta_{J_0}\left( \sum_{m\in \Z }\alpha_{m}  \sigma_{e_2+me_1} \right)
 + \sum_{I< I_0} \rho_{I}  \delta_J \left( \sum_{a\in \Z^n}r_{a } \sigma_{a} \right)
\]
for some $r\in \K$. Note that $[\rho_{I_0}\delta_{J_0+e_2} , x_1] =0$ and
$[\rho_{I_0}\delta_{J_0+e_2}, x_2] = \sigma_{e_2}$. Further, for any $m\neq 0$, if
$I_0 = (0,i_2^0,\ldots ,i_n^0)$
is such that $i_2^0 >0$ then there is $c\in \K^*$ such that
$[c\rho_{I_0 - e_2} \delta_{J_0} \sigma_{me_1} ,x_1]=0$ and
\[
[c\rho_{I_0 - e_2} \delta_{J_0} \sigma_{me_1} ,x_2] =
 \rho_{I_0} \delta_{J_0} \sigma_{e_2+me_1}
 + \sum_{I< I_0} \rho_{I}  \delta_J \left( \sum_{a\in \Z^n}s_{a } \sigma_{a} \right)
\]
for some $s_a\in \K$.
It remains to show that if
\[
[\varphi ,x_2] =\rho_{I_0} \delta_{J_0}\left( \sum_{m\in \Z }\alpha_{m}  \sigma_{e_2+me_1} \right) +
                     \sum_{I< I_0} \rho_{I}  \delta_J \left( \sum_{a\in \Z^n}\alpha_{a } \sigma_{a} \right)
\]
then  $i_2^0 =0$, and $m\neq 0$ implies that $\alpha_m = 0$.

Suppose $i_2^0 = 0$ and $\alpha_{m_0} \neq 0$ for some $m_0\neq 0$.
In view of Lemma
\ref{Lm1:Dq-qtorus} we assume that there is an $i>2$ such that $[\varphi ,x_i]\neq 0$.
Without loss of generality, let
\[
[\varphi ,x_3] =\rho_{P_0} \delta_{Q_0}\left( \sum_{s\in \Z }\zeta_{s}  \sigma_{e_3+se_1} \right) +
                     \sum_{P< P_0} \rho_{P}  \delta_Q \left( \sum_{a\in \Z^n}\zeta_{a } \sigma_{a} \right)
\]
with $\del_1$ not a factor of $\delta_{Q_0}$ and $P_0 = (0,p_2^0,\ldots ,p_n^0)$.
Now consider the identity $[\varphi , x_2x_3] = q_{23}[\varphi ,x_3x_2]$.
That is,
\[
[\varphi ,x_2]x_3 - q_{23}x_3[\varphi ,x_2] = q_{23}[\varphi ,x_3]x_2 - x_2[\varphi ,x_3].
\]
That is, there are scalars $c_1, c_2\in \K^*$ such that
\begin{align*}
(q_{13}^{m_0} -1)c_1 \rho_{I_0+e_3}\delta_{J_0} \sigma_{e_2+e_3+m_0e_1} &+ \textit{ rest }\\
= (q_{13}^{m_0} -1)c_2 &\rho_{P_0+e_2}\delta_{Q_0} \sigma_{e_2+e_3+m_0e_1} + \textit{ rest }.
\end{align*}
In other words, $I_0 +e_3 = P_0 +e_2$ which would imply that $i_2^0 \neq 0$ unless
$m_0 =0$. This contradicts our assumption made in the beginning of this paragraph.
We have thus proved the lemma.
\end{proof}

%----------------------------------------------------------------------------------------------------------------------
%----------------------------------------------------------------------------------------------------------------------
\begin{theorem}\label{Th:Dq-qtorus}
The map $f: \Delta \# \Z^n \to D_q(R)$ is an isomorphism of filtered
algebras.
\end{theorem}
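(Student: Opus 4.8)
The plan is to establish, in order: that $f$ is injective, that $f$ is filtration-preserving in the easy direction, and that $f$ is surjective with the filtrations matching up. Injectivity is immediate: $f(1)\ne 0$, so $\ker f$ is a proper ideal of $\Delta\#\Z^n$, which is simple by Theorem~\ref{DZnsimple}; hence $\ker f=0$. The inclusion $f\big((\Delta\#\Z^n)_i\big)\subseteq D^i_q(R)$ is also clear, because $(\Delta\#\Z^n)_i$ is spanned by products of the order-zero operators $\rho_{x_j},\sigma_{e_j}^{\pm1}$ together with at most $i$ of the order-one operators $\delta_j$, and composition is subadditive on orders. So the whole statement reduces to proving $D^i_q(R)\subseteq f\big((\Delta\#\Z^n)_i\big)$ for all $i\ge0$, which I would do by induction on $i$.

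The base cases are $i=0$, where $D^0_q(R)\cong R\#\Z^n$ is exactly $f\big((\Delta\#\Z^n)_0\big)=f(\Delta_0\#\Z^n)$, and $i=1$, where Theorem~\ref{thm:1st order} exhibits $D^1_q(R)$ as $\sum_i D^0_q(R)\delta_i+D^0_q(R)$, each summand lying in $f\big((\Delta\#\Z^n)_1\big)$ since $\delta_i\in\Delta$ and $(\Delta\#\Z^n)_0(\Delta\#\Z^n)_1\subseteq(\Delta\#\Z^n)_1$. For the inductive step, fix $i\ge2$ and assume $D^{i-1}_q(R)=f\big((\Delta\#\Z^n)_{i-1}\big)$. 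Recall from Section~\ref{Prelim} that $D^i_q(R)$ is the $D^0_q(R)$-bimodule generated by the homogeneous $\varphi$ with $[\varphi,x_j]\in D^{i-1}_q(R)$ for all $j$ (checking on the generators $x_j$ suffices, as $D^{i-1}_q(R)$ is a $D^0_q(R)$-bimodule and $[\varphi,-]$ is a derivation into it); since $D^0_q(R)\subseteq f(\Delta\#\Z^n)$ has order zero, it is enough to show each such $\varphi$ lies in $f\big((\Delta\#\Z^n)_i\big)$. Subtracting $\rho_{\varphi(1)}\in D^0_q(R)$, we may assume $\varphi(1)=0$; then each $[\varphi,x_j]\in f(\Delta\#\Z^n)$ with filtration level $\le i-1$.

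Now I would run an inner argument aimed at subtracting from $\varphi$ an element of $f(\Delta\#\Z^n)$ (of filtration level $\le i$) so as to make all the commutators $[\varphi,x_j]$ vanish; once they do, $\varphi$ is an $R$-bimodule endomorphism of $R$, hence multiplication by an element of $Z(R)=\K$, which must be $0$ as $\varphi(1)=0$, so $\varphi=0$. Organize by the number $k$ of indices $j$ with $[\varphi,x_j]\ne0$. If $k\le n-2$, at least two commutators vanish and Corollary~\ref{Cor1:Dq-qtorus} gives $\varphi\in f(\Delta\#\Z^n)$ directly. If $k=n-1$, say $[\varphi,x_i]=0$ and $[\varphi,x_j]\ne0$, Lemma~\ref{Lm3:Dq-qtorus} supplies $\psi\in f(\Delta\#\Z^n)$ with $[\varphi-\psi,x_i]=[\varphi-\psi,x_j]=0$, reducing to the previous case for $\varphi-\psi$; here Lemma~\ref{Lm3:Dq-qtorus} already absorbs Lemmas~\ref{Lm1:Dq-qtorus} and~\ref{Lm2:Dq-qtorus} (the sub-case ``only one commutator nonzero'' is handled by Lemma~\ref{Lm1:Dq-qtorus}, which pins $[\varphi,x_1]$ to the form $T\sigma_{e_1}$ with $T$ a polynomial in the $\rho_{x_j}$ and $\delta_1$, whereupon, using $[\delta_1,\lambda_{x_1}]=\sigma_{e_1}$, $[\delta_1,\lambda_{x_j}]=0$ for $j\ne1$ and $q_{11}=1$, one checks that replacing each monomial $\rho_I\delta_1^{b}$ of $T$ by $\frac{1}{b+1}\rho_I\delta_1^{b+1}$ produces the required $\psi\in\Delta$). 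The remaining case $k=n$ needs a preliminary step: using the compatibility relations $[\varphi,x_jx_k]=q_{jk}[\varphi,x_kx_j]$ and a leading-term (lexicographic) analysis exactly as in the proofs of Lemmas~\ref{Lm2:Dq-qtorus} and~\ref{Lm3:Dq-qtorus}, one subtracts a suitable element of $f(\Delta\#\Z^n)$ to bring $[\varphi,x_1]$ to $0$, dropping $k$ to $n-1$. In every case the subtracted operator has filtration level $\le i$ (being built from the level-$(\le i-1)$ data $[\varphi,x_\ell]$ together with one extra $\delta$), so $\varphi\in f\big((\Delta\#\Z^n)_i\big)$, completing the induction and the proof.

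The hard part is the case in which several — possibly all — of the $[\varphi,x_j]$ are simultaneously nonzero: one must show the ``defect family'' $\big([\varphi,x_j]\big)_j$, which satisfies the cocycle-type relations coming from $x_jx_k=q_{jk}x_kx_j$, is realised by $\big([\psi,x_j]\big)_j$ for some $\psi\in f(\Delta\#\Z^n)$, and to do this degree-by-degree while keeping control of filtration levels. This is precisely the content of the chain of Lemmas~\ref{Lm1:Dq-qtorus}--\ref{Lm3:Dq-qtorus} and Corollary~\ref{Cor1:Dq-qtorus}, whose decisive input is the algebraic independence of the $q_{ij}$ together with the hypothesis $n\ge3$ (forcing, as in Corollary~\ref{Cor:a=e1+me_2,Cor:a=e1}, any grading shift $a$ with $\beta(a,e_j)=q_{1j}$ for two distinct $j$ to equal $e_1$); the case $n=2$ genuinely behaves differently and is not covered here.
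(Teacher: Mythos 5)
Your overall architecture matches the paper's: injectivity from the simplicity of $\Delta\#\Z^n$ (Theorem~\ref{DZnsimple}), and surjectivity by showing that any homogeneous $\varphi$ with all $[\varphi,x_j]\in f(\Delta\#\Z^n)$ lies in $f(\Delta\#\Z^n)$, with the cases in which at least one commutator vanishes delegated to Lemmas~\ref{Lm1:Dq-qtorus}--\ref{Lm3:Dq-qtorus} and Corollary~\ref{Cor1:Dq-qtorus}. Your filling-in of the ``only one nonzero commutator'' sub-case (integrating $T\sigma_{e_1}$ by replacing each $\rho_I\delta_1^{b}$ with $\frac{1}{b+1}\rho_I\delta_1^{b+1}$) is correct and is a detail the paper leaves implicit.

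The gap is in the case you call $k=n$, where every $[\varphi,x_j]$ is nonzero; this is the technical heart of the theorem, and you dispose of it by asserting that the reduction works ``exactly as in the proofs of Lemmas~\ref{Lm2:Dq-qtorus} and~\ref{Lm3:Dq-qtorus}.'' It does not: those proofs depend essentially on the hypothesis that some $[\varphi,x_i]=0$, which is what collapses one side of the identity $[\varphi,x_ix_j]=q_{ij}[\varphi,x_jx_i]$ and lets the lexicographic leading-term comparison run; with all commutators nonzero, both sides carry unknown data and that comparison yields nothing directly. The paper's actual argument here is different and substantive: one first subtracts commutators of the form $[\rho_I\delta_{J+e_1}\sigma_{a-e_1},x_1]$ and $[\rho_{I-e_1}\delta_J\sigma_a,x_1]$ (using formula~\eqref{commutator} and induction on the filtration level of $[\varphi,x_1]$) so as to normalize $[\varphi,x_1]$ to have every surviving term with $a\notin\Z e_1$ and with no factor $\rho_{x_1}$; then, since $\rho_{x_1}$ is absent from the left-hand side of $[\varphi,x_1]x_i-q_{1i}x_i[\varphi,x_1]=q_{1i}[\varphi,x_i]x_1-x_1[\varphi,x_i]$, the right-hand side is forced to vanish, so $[\varphi,x_1]$ skew-commutes with every $x_i$, $i>1$; finally, Lemma~\ref{Lm1:Dq-qtorus} applied to $[\varphi,x_1]\sigma_{e_1}^{-1}$ yields $[\varphi,x_1]=T\sigma_{2e_1}$, which contradicts the normalization $a\notin\Z e_1$ unless $T=0$, whence $[\varphi,x_1]=0$. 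Without an argument of this kind your induction never gets past the all-nonzero case, so this step needs to be supplied rather than cited.
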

\begin{proof}
By Theorem~\ref{DZnsimple}, $\Delta \# \Z^n$ is simple, and hence $f$ is injective.
It remains to show that $f$ is surjective and that $f$ is a map of filtered algebras.

To show that $f$ is surjective, it suffices to show that if a homogeneous
$\varphi \in \grHom(R,R)$ is such that $[\varphi , x_i] \in f(\Delta \# \Z^n)$ for every
$i\leq n$, then $\varphi \in f(\Delta \# \Z^n)$.

Suppose that $[\varphi , x_i]=0$ for every $i\leq n$. Then $\varphi = \rho_{\varphi (1)} \in
f(\Delta \# \Z^n)$.  Moreover, in view of Lemmas \ref{Lm1:Dq-qtorus} - \ref{Lm3:Dq-qtorus}
and Corollary \ref{Cor1:Dq-qtorus}
assume that $[\varphi ,x_i]\neq 0$ for every $i$.

Let $[\varphi , x_1] = \sum_{I,J,a} \alpha_{I,J,a} \rho_I \delta_J \sigma_a \in f(\Delta \# \Z^n)$.
Note that if $a\in \Z e_1$, then by formula (\ref{commutator})
\[
[\rho_I \delta_{J+e_1} \sigma_{a-e_1} , x_1] = c \rho_I \delta_J \sigma_a \quad \text{for some }
	c\in \K^*.
\]
Similarly, by the formula \ref{commutator}, for any $a\notin \Z e_1$ and $\rho_{x_1}$ a factor of
$\rho_I$,
\[
[\rho_{I-e_1} \delta_J \sigma_a ,x_1] = c_1 \rho_{I-e_1}\delta_{J-e_1} \sigma_{a+e_1} +
c_2\rho_I \delta_J \sigma_a \quad \text{for some }c_1, c_2\in \K^*.
\]
Thus, we may replace $\varphi$ by $\varphi - \psi$ for some $\psi \in f(\Delta \# \Z^n)$ (and use
induction on $m$ where $[\varphi , x_1] \in f(\Delta_m \# \Z^n)$) so that
$[\varphi , x_1] = \sum_{I,J,a} \alpha_{I,J,a} \rho_I \delta_J \sigma_a $,
where $\alpha_{I,J,a}\neq 0$ implies that $a\notin \Z e_1$ and $\rho_{x_1}$ is not a factor
of $\rho_I$. We claim that this will imply that $[\varphi ,x_1] =0$.

Consider $[\varphi , x_1x_i ] =q_{1i}[\varphi ,x_i x_1]$.  That is,
\begin{equation}\label{eqn:comm}
[\varphi, x_1]x_i - q_{1i}x_i[\varphi ,x_1] = q_{1i}[\varphi, x_i]x_1  -  x_1[\varphi ,x_i] .
\end{equation}
Note that $\rho_{x_1}$ is not a factor in any of the terms which appear in the left hand side.
Fix an $i>1$.  Let
$[\varphi ,x_i] = \sum_{M,N,b} \alpha_{M,N,b} \rho_M \delta_N \sigma_b$. Then,
there exist $c_{M,N,b} \in \K^*$ such that
\begin{align*}
q_{1i}[\varphi, x_i]x_1  -  x_1[\varphi ,x_i] &= x_1 \sum_{M,N,b} \left( q_{1i}\beta (b,1) -1 \right)
\alpha_{M,N,b} \rho_M \delta_N \sigma_b \\
 &+
\sum_{M,N,b}n_1 \alpha_{M,N,b}c_{M,N,b} \rho_M \delta_{N-e_1}\sigma_{b+e_1}.
\end{align*}
Since $\rho_{x_1}$ (and hence $x_1$) is not a factor of any term in the left
hand side of \eqref{eqn:comm}, we have $q_{1i}\beta (b,1) =1$ for every $b$ whenever
$\alpha_{M,N,b}\neq 0$. That is, $b \in e_i + \Z e_1$. This further implies that $b+e_1 \in e_i + \Z e_1$.
Since $\alpha_{I,J,a}\neq 0$ implies that $a\notin \Z e_1$, again using \eqref{eqn:comm}
we conclude that $n_1 = 0$ whenever
$ \alpha_{M,N,b}\neq 0$. Thus, $q_{1i}[\varphi, x_i]x_1  -  x_1[\varphi ,x_i] =0$.
Hence by \eqref{eqn:comm}, for every $i>1$, we have $[\varphi, x_1]x_i = q_{1i}x_i[\varphi ,x_1]$.

That is, $[[\varphi,x_1]  \sigma_{e_1}^{-1} ,x_1] \in f(\Delta \# \Z^n)$ and
$[[\varphi,x_1]  \sigma_{e_1}^{-1} ,x_i]=0$
for all $i>1$. By the Lemma \ref{Lm1:Dq-qtorus} we have
$[[\varphi,x_1] \sigma_{e_1}^{-1} ,x_1]  = [\varphi ,x_1]  \sigma_{e_1}^{-1} = T \sigma_{e_1}$
for some $T$
in the subalgebra of $f(\Delta \# \Z^n)$ generated by the set
$\{ \rho_{x_i} \}_{1\leq i \leq n} \cup \{ \delta_1 \}$.  That is, $ [\varphi ,x_1]= T \sigma_{2e_1}$.
This contradicts the assumption on $\varphi$ unless $T=0$.  So, we have
$[\varphi  ,x_1]=0$. Now we appeal to Lemmas \ref{Lm2:Dq-qtorus} and \ref{Lm3:Dq-qtorus}
and Corollary \ref{Cor1:Dq-qtorus} to claim the conclusion of
the theorem.

\end{proof}

\begin{corollary}\label{Dqnspace}
The algebra
$D_q(R)$ is a left and right Noetherian, simple
domain, with GK-dimension $3n$.
\end{corollary}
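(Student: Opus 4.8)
The plan is to deduce every assertion directly from the isomorphism established in Theorem~\ref{Th:Dq-qtorus} together with the structural results on $\Delta\#\Z^n$ obtained in Theorem~\ref{DZnsimple}. By Theorem~\ref{Th:Dq-qtorus}, the map $f:\Delta\#\Z^n\to D_q(R)$ is an isomorphism of filtered algebras, so in particular $D_q(R)\cong\Delta\#\Z^n$ as $\K$-algebras. Each of the four asserted properties --- being a domain, being simple, being left and right Noetherian, and having GK-dimension $3n$ --- is invariant under $\K$-algebra isomorphism, so it suffices to transport along $f$ the corresponding statements for $\Delta\#\Z^n$, which are precisely the content of Theorem~\ref{DZnsimple}.

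In more detail: $\Delta\#\Z^n$ is a domain and is simple by Theorem~\ref{DZnsimple}, hence so is $D_q(R)$; $\Delta\#\Z^n$ is left and right Noetherian by Theorem~\ref{DZnsimple} (established there from the iterated skew Laurent structure over $\Delta$ via \cite[Theorems 1.5.12 and 1.4.5]{McCR}), hence so is $D_q(R)$; and $\GK(\Delta\#\Z^n)=3n$ by Theorem~\ref{DZnsimple}, hence $\GK(D_q(R))=3n$ since GK-dimension depends only on the isomorphism type of the algebra. As the conclusion of the corollary is just the conjunction of these transported facts, it follows at once. If one wishes, one may additionally record that the isomorphism is filtered, matching $D_q^0(R)\cong R\#\Z^n$ with the degree-$0$ part, but this is not needed for the statement.

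I do not expect any genuine obstacle at this stage: all the substance has been front-loaded into the two cited theorems. The analysis of the quantized Weyl algebra $A_n^\Lambda$ --- its simplicity, Noetherianity, basis and growth, and the passage to $\Delta\#\Z^n$ --- is carried out in Theorem~\ref{DZnsimple}, while the genuinely delicate point, namely the surjectivity of $f$ identifying $D_q(R)$ with $\Delta\#\Z^n$, is handled in Theorem~\ref{Th:Dq-qtorus} by means of Lemmas~\ref{Lm1:Dq-qtorus}--\ref{Lm3:Dq-qtorus} and Corollary~\ref{Cor1:Dq-qtorus}. The corollary is therefore a direct bookkeeping consequence of results already in hand.
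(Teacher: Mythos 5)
Your proposal is correct and is exactly the argument the paper intends: the corollary is stated without proof precisely because it follows immediately by transporting the properties of $\Delta\#\Z^n$ established in Theorem~\ref{DZnsimple} across the isomorphism $f$ of Theorem~\ref{Th:Dq-qtorus}. Nothing further is needed.
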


%---------------------------------------------------------------------------
%---------------------------------------------------------------------------
\subsubsection{Quantum tori in two variables}\label{qtori}
%---------------------------------------------------------------------------
%---------------------------------------------------------------------------
Let
\begin{align*}
&A_1 = D_q (\K \langle x,x^{-1},y\rangle  /(xy=qyx)),
\;
A_2 = D_q (\K \langle x,y,y^{-1}\rangle  /(xy=qyx)),\\
&\textit{and }
A_3 = D_q (\K \langle x,x^{-1},y,y^{-1}\rangle  /(xy=qyx)).
\end{align*}
Thus
\begin{align*}
&A_1 \textit{ is generated by }
\{ \lambda_x, \lambda_{x^{-1}}, \rho_y, \del_x^a, \del_y^a \mid
    a\in \{-1, 0,1\} \};\\
&A_2 \textit{ is  generated by }
\{ \lambda_x, \rho_y, \rho_{y^{-1}}, \del_x^a, \del_y^a \mid
    a\in \{-1, 0,1\} \};\\
&A_3 \textit{ is  generated by }
\{ \lambda_x, \lambda_{x^{-1}},\rho_y, \rho_{y^{-1}}, \del_x^a, \del_y^a \mid
    a\in \{-1, 0,1\} \}.
\end{align*}

We first note the following:
\begin{corollary}\label{datt2}
There exists an isomorphism of the algebra $A_3$ and its
opposite algebra, $A_3^o$, such that the isomorphism restricts to
an isomorphism of $D_q(R)$ (respectively, $A_1$ and $A_2$) and its opposite, $D_q(R)^o$
(respectively, $A_1^o$ and $A_2^o$) where $R= \K<x,y>/(xy=qyx)$ is the coordinate algebra
of the quantum plane.
\end{corollary}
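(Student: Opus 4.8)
The plan is to deduce the statement from Corollary~\ref{datt} by using the tensor decomposition of the algebra of quantum differential operators on the quantum plane. Recall from \cite{IM2}, summarised in Subsection~\ref{section:quantum-plane}, that $D_q(R)\cong D_x\otimes D_y$ as filtered algebras, where $D_x$ is the subalgebra generated by $\lambda_x$ and the $\del_x^{a}$, $D_y$ the subalgebra generated by $\rho_y$ and the $\del_y^{a}$, and $D_x\cong D_y\cong D_q(\K[t])$. Under $D_x\cong D_q(\K[t])$ the operator $\lambda_x$ corresponds to $t$, whose powers form an Ore set in $D_q(\K[t])$ with localization $D_q(\K[t,t^{-1}])$ (Notation~\ref{Dxetal} and Example~\ref{DqKx}), and similarly for $\rho_y$ in $D_y$. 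Hence the powers of $\lambda_x$ and the powers of $\rho_y$ each form an Ore set in $D_q(R)=D_x\otimes D_y$. The first step is to check, using Proposition~\ref{prop:localization} and a comparison of generating sets, that inverting $\lambda_x$, or $\rho_y$, or both, in $D_q(R)$ produces exactly $A_1$, $A_2$ and $A_3$; thus $A_1\cong D_q(\K[t,t^{-1}])\otimes D_q(\K[t])$, $A_2\cong D_q(\K[t])\otimes D_q(\K[t,t^{-1}])$ and $A_3\cong D_q(\K[t,t^{-1}])\otimes D_q(\K[t,t^{-1}])$, with $D_q(R)\cong D_q(\K[t])\otimes D_q(\K[t])$ sitting naturally inside each of these.

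Next I would use the elementary fact that for $\K$-algebras $M$ and $N$ the identity map gives an isomorphism $(M\otimes N)^o\cong M^o\otimes N^o$, so that $A_3^o\cong D_q(\K[t,t^{-1}])^o\otimes D_q(\K[t,t^{-1}])^o$ and likewise for $A_1^o$, $A_2^o$ and $D_q(R)^o$. By Corollary~\ref{datt} there is a $\K$-algebra isomorphism $\Phi\colon D_q(\K[t,t^{-1}])\to D_q(\K[t,t^{-1}])^o$ restricting to an isomorphism $D_q(\K[t])\to D_q(\K[t])^o$. Putting $\Psi:=\Phi\otimes\Phi$ then yields a $\K$-algebra isomorphism
\[\Psi\colon A_3\;\cong\;D_q(\K[t,t^{-1}])\otimes D_q(\K[t,t^{-1}])\;\longrightarrow\;D_q(\K[t,t^{-1}])^o\otimes D_q(\K[t,t^{-1}])^o\;\cong\;A_3^o.\]
Since $\Phi$ carries $D_q(\K[t])$ isomorphically onto $D_q(\K[t])^o$, the map $\Psi$ carries $D_q(\K[t])\otimes D_q(\K[t])$ onto $D_q(\K[t])^o\otimes D_q(\K[t])^o$, and carries $D_q(\K[t,t^{-1}])\otimes D_q(\K[t])$ and $D_q(\K[t])\otimes D_q(\K[t,t^{-1}])$ onto their opposites; translating through the identifications above, $\Psi$ restricts to isomorphisms $D_q(R)\to D_q(R)^o$, $A_1\to A_1^o$ and $A_2\to A_2^o$, as required. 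On generators $\Psi$ is simply the two-variable version of the map of Corollary~\ref{datt}: $\Psi(\lambda_x)=(\lambda_x)^o$, $\Psi(\lambda_{x^{-1}})=(\lambda_{x^{-1}})^o$, $\Psi(\del_x^{0})=-(\del_x^{0})^o$, $\Psi(\del_x^{1})=-q^{-1}(\del_x^{-1})^o$, $\Psi(\del_x^{-1})=-q(\del_x^{1})^o$, and the same formulae with $\rho_y,\rho_{y^{-1}},\del_y^{a}$ in place of $\lambda_x,\lambda_{x^{-1}},\del_x^{a}$.

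The step I expect to be the main obstacle is the identification of $A_1$, $A_2$ and $A_3$ with these (localized) tensor products, that is, showing that adjoining $\lambda_{x^{-1}}$ and/or $\rho_{y^{-1}}$ to $D_q(R)$ yields the full algebra of quantum differential operators on the corresponding quantum torus. This needs the Ore property for the powers of $\lambda_x$ and of $\rho_y$ in $D_q(R)$ — immediate from the tensor decomposition and the one-variable case — together with Proposition~\ref{prop:localization}, which shows that each operator in $A_i$ becomes an operator in $D_q(R)$ after multiplication by a suitable monomial in $x$ and $y$, so that $A_i$ is contained in, hence equal to, the relevant localization. Once this is settled, the construction of $\Psi$ and the verification that it restricts correctly are purely formal, being nothing more than Corollary~\ref{datt} applied in each tensor factor.
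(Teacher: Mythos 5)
Your proposal is correct and follows essentially the same route as the paper: the paper's proof likewise identifies $A_3$ with $D_q(\K[t,t^{-1}])\otimes D_q(\K[t,t^{-1}])$ via Section~\ref{section:quantum-plane} and Remark~\ref{remark:localization}, and then applies Corollary~\ref{datt} in each tensor factor. The details you supply (the Ore/localization argument identifying $A_1$, $A_2$, $A_3$ and the explicit $\Phi\otimes\Phi$ with its action on generators) are exactly the steps the paper leaves implicit.
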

\begin{proof}

By Section \ref{section:quantum-plane} and Remark
\ref{remark:localization},
 $A_3 \cong D_q(\K [t,t^{-1}]) \otimes D_q(\K [t,t^{-1}])$ as
filtered algebras. Now use Corollary \ref{datt} to complete the proof.
\end{proof}

Let $R$ be the quantum torus $\K[x^{\pm 1}, y^{\pm 1}:xy=qyx]$, where $q$ is not a root of unity.
$D_q(R)$ is generated by the set $\{\lambda_x, \lambda_{x^{-1}}, \rho_y, \rho_{y^{-1}}, \del_x^a, \del_y^a\}$ for $a\in \{-1, 0,1\}\}.$
Following the same approach as for $\K[x^{\pm 1}]$, we aim to include the following operators, which act as automorphisms on $R$, among the generators (using notation from Remark \ref{different-betas} and
in Section \ref{section:quantum-plane}):
\begin{eqnarray*}
\sigma_{x}:x^iy^j\mapsto q^ix^iy^j;&\sigma_{x}^{-1}:x^iy^j\mapsto q^{-i}x^iy^j;\\
\sigma_{y}:x^iy^j\mapsto q^jx^iy^j;&\sigma_{y}^{-1}:x^iy^j\mapsto q^{-j}x^iy^j.
\end{eqnarray*}
Using the identities
\begin{eqnarray*}
\rho_y&=&\lambda_y \sigma_{x},\\
\rho_{y^{-1}}&=&\sigma_{x}^{-1}\lambda_{y^{-1}},\\
\del_x^1&=&(q-1)^{-1}\lambda_{x^{-1}}(\sigma_{x}-1),\\
\del_x^{-1}&=&(q^{-1}-1)^{-1}\lambda_{x^{-1}}(\sigma_{x}^{-1}-1),\\
\del_y^1&=&(q-1)^{-1}\rho_{y^{-1}}(\sigma_{y}-1),\\
\del_y^{-1}&=&(q^{-1}-1)^{-1}\rho_{y^{-1}}(\sigma_{y}^{-1}-1),
\end{eqnarray*}
we can work with the alternative generating set
\[
\{\lambda_x, \lambda_{x^{-1}}, \lambda_y, \lambda_{y^{-1}}, \del_x, (\del_y \sigma_x),
\sigma_{x}^{\pm 1},\sigma_{y}^{\pm 1}\},
\]
where $\del_x=\del_x^{\beta^0}$ and
$\del_y=\del_y^{\beta^0}$ as described in Section \ref{section:quantum-plane}. As this only involves left multiplications, we shall, for $z\in \{x^{\pm 1},y^{\pm 1}\}$, write $z$ for $\lambda_z$, and thereby view $R$ as a subalgebra of $D_q(R)$ acting by left multiplication.

Let $R_1$ be the subalgebra generated by $x, y, \del_x$ and $(\del_y \sigma_x)$ and let is $A^{\Lambda}_2$ be the quantized Weyl algebra  as in proof of Theorem \ref{Deltasimple}
with  $\Lambda=
	\begin{pmatrix}
	1 &q\\
	q^{-1} &1
	\end{pmatrix}$. There is a surjective homomorphism $\theta: A^{\Lambda}_2\twoheadrightarrow  R_1$,  with  $u_1\mapsto \del_x, u_2\mapsto (\del_y \sigma_x),v_1\mapsto x$ and $v_2\mapsto y$. By  \cite[Theorem 6.5]{JW}, $A^{\Lambda}_2$ is simple so  $\theta$ is an isomorphism. Thus $R_1$ is simple and right and left Noetherian. It follows that the localization $R_2$ of $R_1$ obtained by inverting $x$ and $y$ is also simple and right and left Noetherian.

Let $\alpha$ be the $\K$-automorphism of $R_2$ such that $\alpha(x)=qx$, $\alpha(\del_x)=q^{-1}\del_x$,
$\alpha(y)=y$ and $\alpha(\del_y\sigma_x)=(\del_y\sigma_x)$. Clearly  $\alpha$ is not inner on $R_2$ where the units have the form $\kappa x^iy^j$, $0\neq\kappa\in\K, i,j\geq 0$.
By \cite[Theorem 1.8.5]{McCR}, the skew Laurent polynomial ring $R_2[X^{\pm 1};\alpha]$ is simple and it follows, since $\sigma_{x}x=qx\sigma_{x}$,
$\sigma_{x}\del_x=q^{-1}\del_x\sigma_{x}$, $\sigma_{x}y=y\sigma_{x}$ and $\sigma_{x}(\del_y\sigma_x)=(\del_y\sigma_x)\sigma_{x}$ that the $\K$-algebra extension $R_3$ of $R_2$ generated by
$\sigma_{x}^{\pm 1}$ is isomorphic to  $R_2[X^{\pm 1};\alpha]$. Finally, let $\gamma$ be the $\K$-automorphism of $R_3$ such that $\gamma(x)=x$, $\gamma(\del_x)=\del_x$,
$\gamma(y)=q^{-1}y$, $\gamma(\del_y\sigma_x)=q\del_y\sigma_x$ and $\gamma(\sigma_{x})=\sigma_{x}$. By similar arguments to those above, $D_q(R)\simeq R_3[Y^{\pm 1};\gamma]$ and is simple and right and left Noetherian. We have thus proved the following:
\begin{theorem}\label{theorem:A1,A2,A3}
The algebras $A_1, A_2$ and $A_3$ are simple domains, which are left and right Noetherian.
\end{theorem}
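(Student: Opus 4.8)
The plan is to reduce all three algebras to the one-variable algebras $D_q(\K[t])$ and $D_q(\K[t^{\pm 1}])$ of Section~\ref{poly}, whose structure is completely understood, and then to verify that simplicity, the left and right Noetherian property, and the domain property are inherited by the localizations and tensor products linking these pieces to the $A_i$.

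Recall from Section~\ref{section:quantum-plane} that for the quantum plane $R=\K\langle x,y\rangle/(xy=qyx)$ one has $D_q(R)=D_x\otimes D_y$ as filtered $\K$-algebras, the two factors commuting elementwise and each isomorphic to $D_q(\K[t])$, where $D_x$ is generated by $\lambda_x$ and the $x$-derivations, $D_y$ by $\rho_y$ and the $y$-derivations. Each $A_i$ is the algebra of quantum differential operators on the localization of $R$ got by inverting $x$, $y$, or both, and from the explicit generating sets listed it is $D_q(R)$ with $\lambda_{x^{-1}}$ and/or $\rho_{y^{-1}}$ adjoined. By Proposition~\ref{prop:localization} and Remark~\ref{remark:localization}, exactly as for $\K[x^{\pm 1}]$ in Remark~\ref{moreonDxx-1}, inverting $x$ in $R$ localizes $D_q(R)$ at the powers of $\lambda_x$; although $\lambda_x$ need not be normal in $D_x$, its powers form a right and left Ore set, and by Notation~\ref{Dxetal} the corresponding localization of $D_x$ is $D_q(\K[t^{\pm 1}])$. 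Localizing the relevant tensor factor therefore gives $A_1\cong D_q(\K[t^{\pm 1}])\otimes D_q(\K[t])$, $A_2\cong D_q(\K[t])\otimes D_q(\K[t^{\pm 1}])$ and $A_3\cong D_q(\K[t^{\pm 1}])\otimes D_q(\K[t^{\pm 1}])$ (the last is Corollary~\ref{datt2}); $A_3$ is also precisely the algebra already shown to be a simple left and right Noetherian domain by the construction $D_q(R)\simeq R_3[Y^{\pm 1};\gamma]$ preceding the theorem.

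It now suffices to show that $D_q(\K[t])$ and $D_q(\K[t^{\pm 1}])$ are simple left and right Noetherian domains with at least one of the two having centre $\K$, and that these properties survive the above tensor products. For $D_q(\K[t^{\pm 1}])$ this is contained in Notation~\ref{Dxetal}: $D_q(\K[t^{\pm 1}])\simeq A_t[X^{\pm 1};\alpha]$, where $A_t$ is the localization of the Weyl algebra $\K[t][\del;d/dt]$ at the powers of $t$, is a simple left and right Noetherian domain, and its centre is $\K$ because $Z(A_t)=\K$ and no nonzero power of $\alpha$ is inner (the units of $A_t$ are the $\lambda t^i$, $\lambda\in\K^*$, $i\in\Z$, while $\alpha(t)=qt$ with $q$ not a root of unity). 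For $D:=D_q(\K[t])$: it is a domain by Notation~\ref{Dxetal} and left and right Noetherian by Corollary~\ref{DqKxnoeth} (valid since $q$ is transcendental over $\Q$), and it is simple --- if $0\neq I$ is a two-sided ideal of $D$, then $I\cdot D_q(\K[t^{\pm 1}])$ is a nonzero two-sided ideal of the simple ring $D_q(\K[t^{\pm 1}])$, so $t^N\in I$ for some $N$, whereupon \eqref{dxncomm} with $a=0$ gives $Nt^{N-1}=\del t^N-t^N\del\in I$, and downward induction yields $1\in I$, so $I=D$.

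To finish, write $D_q(\K[t^{\pm 1}])\simeq A_t[X^{\pm 1};\alpha]$ with $A_t$ the localization of $\K[t][\del;d/dt]$ at the powers of $t$; then for any $\K$-algebra $B$, the algebra $B\otimes D_q(\K[t^{\pm 1}])$ is obtained from $B$ by adjoining a central indeterminate, then an Ore extension, then an Ore localization at the powers of the image of $t$, then a skew-Laurent extension. Taking $B=D_q(\K[t])$ and then $B=D_q(\K[t^{\pm 1}])$, and using that each of these four operations preserves both the Noetherian property and the domain property, shows that $A_1$, $A_2$ and $A_3$ are left and right Noetherian domains. Their simplicity follows from the standard fact that $S\otimes_\K T$ is simple whenever $S$ is simple with centre $\K$ and $T$ is simple, applied with $S$ the tensor factor isomorphic to $D_q(\K[t^{\pm 1}])$. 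I expect the only real obstacle to be the first step --- unwinding Proposition~\ref{prop:localization} carefully enough to see that inverting one variable of $R$ localizes exactly one tensor factor of $D_q(R)$, so that each $A_i$ genuinely is the asserted tensor product; the simplicity of $D_q(\K[t])$ and the tensor-stability bookkeeping are then routine.
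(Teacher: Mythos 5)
Your proposal is correct, but it takes a genuinely different route from the paper. The paper's own proof (given just before the theorem statement) works only with the fully localized case $A_3$: after rewriting the generating set as $\{\lambda_{x^{\pm1}},\lambda_{y^{\pm1}},\del_x,\del_y\sigma_x,\sigma_x^{\pm1},\sigma_y^{\pm1}\}$, it exhibits $A_3$ as an iterated construction --- a quantized Weyl algebra $A_2^{\Lambda}$ (simple by \cite[Theorem 6.5]{JW}), localized at $x$ and $y$, followed by two skew Laurent extensions by non-inner automorphisms, with simplicity and the Noetherian property propagated at each step via \cite[Theorem 1.8.5]{McCR}; this change of generators uses invertibility of both $x$ and $y$ (e.g.\ $\del_y^{\pm1}$ is recovered from $\sigma_y^{\pm1}$ only via $\rho_{y^{-1}}$), so it does not literally extend to $A_1$ and $A_2$, which the paper leaves implicit. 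You instead decompose each $A_i$ as a tensor product of the one-variable algebras $D_q(\K[t])$ and $D_q(\K[t^{\pm1}])$ --- a decomposition the paper itself asserts for $A_3$ in Corollary~\ref{datt2} and which follows for $A_1,A_2$ from Section~\ref{section:quantum-plane} together with Proposition~\ref{prop:localization} and the Ore property of the powers of $\lambda_x$ from Example~\ref{DqKx} --- and then invoke the central-simple tensor product theorem plus the realization of $B\otimes D_q(\K[t^{\pm1}])$ as an iterated Ore extension/localization of $B$. What each approach buys: the paper's construction is self-contained for $A_3$ and never needs the (hard) Noetherian result for the unlocalized one-variable algebra; your argument is uniform over all three algebras (and so actually fills the gap for $A_1$ and $A_2$), gives a clean independent proof that $D_q(\K[t])$ is simple via its Noetherian localization, but for $A_1$ and $A_2$ it genuinely depends on Corollary~\ref{DqKxnoeth} and hence on $q$ being transcendental over $\Q$, which is in any case the standing hypothesis of this section.
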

%---------------------------------------------------------------------------
%---------------------------------------------------------------------------
\subsubsection{\textbf{The algebra $D_q(R_{s,n})$, $n\geq 3$}}
%---------------------------------------------------------------------------
%---------------------------------------------------------------------------

By Proposition \ref{prop:localization}, the algebra $D_q(R_{s,n})$
is the localization of the algebra $D_q(R)$ with respect to the
Ore set $S$  generated by $\{ x_1, \ldots, x_s \}$. In other words,
$D_q(R_{r,n}) \cong S^{-1}\Delta \# \Z^n$. As in Corollary~\ref{Dqnspace}, we see that
$D_q(R_{s,n})$ is a Noetherian, simple domain
of GK-dimension $3n$.

The contruction of $D_q(R_{s,n})$ for $0\leq s \leq n$
was with respect to the matrix of parameters
$(q_{ij})$.  Notice that the
algebra of quantum differential operators
correspond to the transpose of the matrix $(q_{ij})$ is isomorphic to the algebra
$(D_q(R_{s,n}))^o$.

\begin{corollary}\label{tori-sev-opp}
There is an isomorphism $\Phi : D_q(R_{n,n}) \to (D_q(R_{n,n}))^o$
which restricts to isomorphism from $D_q(R_{s,n})$ to its opposite for
every $s, 0\leq s\leq n$.
\end{corollary}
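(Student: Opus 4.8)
The plan is to write the isomorphism down explicitly on generators, in the spirit of Corollary~\ref{datt}, and then to descend to the tori by localisation. Take first $s=0$, so $R_{0,n}=R_n$. By Theorems~\ref{Deltasimple} and~\ref{Th:Dq-qtorus} the algebra $D_q(R_n)=\Delta\#\Z^n$ is presented by generators $\rho_{x_i},\delta_i,\sigma_{e_i}^{\pm1}$ ($1\le i\le n$), subject to the quantized Weyl relations $\rho_{x_i}\rho_{x_j}=q_{ji}\rho_{x_j}\rho_{x_i}$, $\delta_i\delta_j=q_{ji}\delta_j\delta_i$, $\delta_i\rho_{x_j}=q_{ij}\rho_{x_j}\delta_i$ for $i\ne j$, $\delta_i\rho_{x_i}-\rho_{x_i}\delta_i=1$, together with $\sigma_{e_k}\rho_{x_i}=\beta(e_k,e_i)\rho_{x_i}\sigma_{e_k}$, $\sigma_{e_k}\delta_i=\beta(e_k,-e_i)\delta_i\sigma_{e_k}$, $\sigma_{e_k}\sigma_{e_l}=\sigma_{e_l}\sigma_{e_k}$ and $\sigma_{e_k}\sigma_{e_k}^{-1}=1$; equivalently, after the invertible change of generators $\lambda_{x_i}=\rho_{x_i}\sigma_{e_i}$ and $\del_i=\sigma_{-e_i}\delta_i$, it is presented by $\lambda_{x_i},\del_i,\sigma_{e_i}^{\pm1}$. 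I would define a $\K$-algebra homomorphism $\Psi\colon D_q(R_n)\to(D_q(R_n))^o$, that is, a $\K$-linear anti-automorphism of $D_q(R_n)$, by
\[
\Psi(\lambda_{x_i})=(-\rho_{x_i})^{o},\ \ \Psi(\rho_{x_i})=(-\lambda_{x_i})^{o},\ \ \Psi(\delta_i)=(\del_i)^{o},\ \ \Psi(\del_i)=(\delta_i)^{o},\ \ \Psi(\sigma_{e_i})=(\sigma_{-e_i})^{o},
\]
and check, exactly as in the proof of Corollary~\ref{datt}, that $\Psi$ sends each defining relation of $D_q(R_n)$ to a valid identity; this is a short computation using only $q_{ii}=1$ and $q_{ij}q_{ji}=1$. (For instance $\Psi$ converts $\delta_i\rho_{x_i}-\rho_{x_i}\delta_i=1$ into $\del_i\lambda_{x_i}-\lambda_{x_i}\del_i=1$ and $\rho_{x_i}\rho_{x_j}=q_{ji}\rho_{x_j}\rho_{x_i}$ into $\lambda_{x_j}\lambda_{x_i}=q_{ji}\lambda_{x_i}\lambda_{x_j}$, both of which hold.) Since $D_q(R_n)$ is simple (Corollary~\ref{Dqnspace}), $\Psi$ is injective, and it is surjective since its image contains $-\lambda_{x_i},-\rho_{x_i},\del_i,\delta_i,\sigma_{e_i}^{\pm1}$, which generate $(D_q(R_n))^o$; hence $\Psi$ is an isomorphism.

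Now fix $s$ with $0\le s\le n$. By Proposition~\ref{prop:localization} and the discussion following it, $D_q(R_{s,n})$ is the localisation $D_q(R_n)_{\mathcal{S}_s}$ of $D_q(R_n)$ at the two-sided Ore set $\mathcal{S}_s$ of monomials in $\lambda_{x_1},\dots,\lambda_{x_s}$; since each $\sigma_{e_i}$ is a unit of $D_q(R_n)$ this is also the localisation at the monomials in $\rho_{x_1},\dots,\rho_{x_s}$. As $\Psi$ is an isomorphism, $\Psi(\mathcal{S}_s)$ is a two-sided Ore set in $(D_q(R_n))^o$ and $\Psi$ extends to an isomorphism $D_q(R_n)_{\mathcal{S}_s}\to(D_q(R_n))^o_{\Psi(\mathcal{S}_s)}$. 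Moreover $(D_q(R_{s,n}))^o=(D_q(R_n)_{\mathcal{S}_s})^o=(D_q(R_n))^o_{\mathcal{S}_s}=(D_q(R_n))^o_{\Psi(\mathcal{S}_s)}$: the first two equalities hold because a two-sided Ore set of a ring is one of its opposite and localisation commutes with passage to the opposite, and the last because $\mathcal{S}_s$ and $\Psi(\mathcal{S}_s)$ become mutually invertible in each other's localisations (the generators $\rho_{x_i}$ and $\lambda_{x_i}$ differing by the units $\sigma_{e_i}$). Therefore $\Psi$ extends uniquely to an isomorphism $\Phi_s\colon D_q(R_{s,n})\to(D_q(R_{s,n}))^o$. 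Since $D_q(R_n)\subseteq D_q(R_{s,n})\subseteq D_q(R_{n,n})$ are nested localisations and each $\Phi_s$ is the restriction of $\Phi:=\Phi_n$ (localisation at $\mathcal{S}_s\subseteq\mathcal{S}_n$ being a restriction of localisation at $\mathcal{S}_n$), this is exactly the conclusion of the corollary.

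The argument involves nothing deep; the one point that must be got right is to have the presentation of $D_q(R_n)$ by the generators $\rho_{x_i},\delta_i,\sigma_{e_i}^{\pm1}$ in hand so that $\Psi$ is well defined, which is supplied by Theorems~\ref{Deltasimple} and~\ref{Th:Dq-qtorus}, and then to be careful in the localisation step about the interplay between $(\cdot)^o$ and the inversion of the $\lambda_{x_i}$. (One could also argue invariantly, without the explicit formula: $(A\#G)^o\cong A^o\#G$ for any abelian $G$ via $ag\mapsto g^{-1}(a)g^{-1}$, and $\Delta^o$ is again a quantized Weyl algebra $A_n^\Lambda$ with transposed parameter matrix, realised $\Z^n$-equivariantly inside $D_q(R_n)$ by the subalgebra $S_n=\langle\lambda_{x_i},\del_i\rangle$, so that $D_q(R_n)=S_n\#\Z^n\cong\Delta^o\#\Z^n\cong(\Delta\#\Z^n)^o=(D_q(R_n))^o$; but the explicit $\Psi$ above is quicker to verify and makes the localisation step transparent.)
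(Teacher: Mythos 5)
Your proposal is correct and follows essentially the same route as the paper: the paper's proof likewise just exhibits an explicit anti-automorphism on the generators $\rho_{x_i}$, $\del_i$, $\sigma_\gamma$ (namely $\rho_{x_i}\mapsto(\sigma_{e_i}\rho_{x_i})^o$, $\del_i\mapsto-(\sigma_{-e_i}\del_i)^o$, $\sigma_\gamma\mapsto(\sigma_\gamma)^o$, which agrees with yours up to where the signs and the units $\sigma_{e_i}$ are placed), leaving the relation-checking and the compatibility with the localisations implicit. Your version supplies exactly those omitted verifications, so it is the same argument carried out in more detail.
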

\begin{proof}
The required map $\Phi  : D_q(R_{n,n}) \to (D_q(R_{n,n}))^o$ is given by
\[
\Phi ( \del_i) = - (\sigma_{-e_i}\del_i)^o, \quad
\Phi ( \rho_{x_i}) =  (\sigma_{e_i}\rho_{x_i})^o, \quad
\Phi ( \sigma_{\gamma}) =  (\sigma_{\gamma})^o
\]
for $1\leq i \leq n, \gamma \in \Z^n$.
\end{proof}
%---------------------------------------------------------------------------
%---------------------------------------------------------------------------
\section{The quantum exterior algebra}\label{ext}
%---------------------------------------------------------------------------
%---------------------------------------------------------------------------
Here we consider the multiparameter quantum exterior algebra, $R$, in $n$ variables,
$\xi_1,\xi_2, \cdots ,\xi_n$ with parameters $p_{ij}$ such that
$p_{ji}=\dfrac{1}{p_{ij}}$ and $p_{ii}=-1$. Then
\[
R = \K\langle  \xi_1,\xi_2,\cdots ,\xi_n\rangle  / (\xi_i^2 =0, \xi_i \xi_j = -p_{ij}\xi_j\xi_i)
\]
Then $R$ is a finite dimensional $\K$-algebra, graded by
$\mathbb{Z}^n$ where degree of $\xi_i$ is $e_i$, the standard
basis element, for each $i=1,2,\cdots ,n$.
Set $\beta :\Z^n \times \Z^n \to \K^*$ be given by
$\beta (e_i, e_j) = -p_{ij}$.
As usual, denote by $\sigma_{e_i} \in D^0_q(R)$ the grading automorphism
on $R$ defined by $\sigma_{e_i} (r) = \beta (e_i, d_r) r$, where
$d_r$ denotes the degree of $r$.
\begin{definition}
For each $i\leq n$, denote by $\del_i^{-e_i}$ the left
$\sigma_{-e_i}$-derivation on $R$ defined by
\[
\del_i^{-e_i}(\xi_i) = 1\quad \del_i^{-e_i}(\xi_j) = 0, \quad \textit{for }
j\neq i, \textit{  and }
\del_i^{-e_i}(1)=0.
\]
\end{definition}
Note that $\del_i^{-e_i} (\xi_k \xi_l + p_{kl} \xi_l \xi_k)=0$
and $\del_i^{-e_i}(\xi_k^2) =0$ for any $i,k,l \leq n$.
Each $\del_i^{-e_i} \in D_q^1(R)$. The following theorem is
similar to the result in  of \cite[Section 4.1]{I}.
\begin{theorem}
The algebra $D_q(R)=\Hom _{\K}(R,R)$.
\end{theorem}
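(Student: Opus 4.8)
The plan is to exhibit $\Hom_\K(R,R)$ as a subalgebra of $D_q(R)$; since $D_q(R)\subseteq\grHom_\K(R,R)$ always, and since $\grHom_\K(R,R)=\Hom_\K(R,R)$ here (as $R$ is finite dimensional, any $\K$-linear endomorphism is a finite sum of its homogeneous components), this will give $D_q(R)=\Hom_\K(R,R)$. Recall that $R$ has $\K$-basis the $2^n$ standard monomials $\xi_S:=\xi_{i_1}\cdots\xi_{i_k}$ indexed by subsets $S=\{i_1<\cdots<i_k\}$ of $\{1,\dots,n\}$, so $\Hom_\K(R,R)\cong M_{2^n}(\K)$. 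Let $B$ be the subalgebra of $\Hom_\K(R,R)$ generated by the creation operators $\lambda_{\xi_i}$ and the annihilation operators $\del_i:=\del_i^{-e_i}$, $1\leq i\leq n$. Each $\lambda_{\xi_i}$ lies in $D^0_q(R)$ and each $\del_i$ lies in $D^1_q(R)$, so $B\subseteq D_q(R)$; hence it suffices to prove $B=\Hom_\K(R,R)$.

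The first step is to show that $R$ is a simple $B$-module --- the familiar assertion that the (multiparameter) fermionic Fock representation is irreducible, with the relations $\xi_i\xi_j=-p_{ij}\xi_j\xi_i$ and $\xi_i^2=0$ causing no difficulty because they only ever contribute nonzero scalar factors. Given $0\neq v=\sum_S c_S\xi_S$, choose $T$ with $c_T\neq 0$ and $|T|$ maximal; the composite $\prod_{i\in T}\del_i$ (say with indices in increasing order) annihilates $\xi_S$ whenever $T\not\subseteq S$, and sends $\xi_S$ to a nonzero scalar multiple of $\xi_{S\setminus T}$ whenever $T\subseteq S$. Since $c_S\neq 0$ forces $|S|\leq|T|$ while $T\subseteq S$ forces $|S|\geq|T|$, the unique contributing term comes from $S=T$ and equals a nonzero multiple of $1$; thus $1\in Bv$, and applying the $\lambda_{\xi_i}$ recovers every $\xi_S$, so $Bv=R$.

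The second step is to compute the commutant $\End_B(R)$. As the $\xi_i$ generate $R$ as an algebra, any $\phi\in\End_B(R)$ commutes with every left multiplication, whence $\phi(r)=\phi(\lambda_r(1))=r\,\phi(1)$; that is, $\phi$ is right multiplication by $\phi(1)$. Commuting with each $\del_i$ and evaluating at $1$ gives $\del_i(\phi(1))=\phi(\del_i(1))=0$ for all $i$. Writing $\phi(1)=\sum_S c_S\xi_S$ and using, for each fixed $i$, that the vectors $\del_i(\xi_S)$ with $i\in S$ are nonzero scalar multiples of the pairwise distinct monomials $\xi_{S\setminus i}$ and hence linearly independent, we conclude $c_S=0$ for every nonempty $S$, so $\phi(1)\in\K$ and $\phi$ is a scalar. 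Therefore $\End_B(R)=\K$, and the Jacobson density theorem (equivalently Burnside's theorem, which applies without needing $\K$ algebraically closed precisely because the commutant turned out to be $\K$) gives $B=\Hom_\K(R,R)$. Combining, $\Hom_\K(R,R)=B\subseteq D_q(R)\subseteq\grHom_\K(R,R)=\Hom_\K(R,R)$, so all of these coincide.

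I do not expect a serious obstacle: this is the standard irreducibility-with-trivial-commutant argument for a fermionic Fock space, and the only point needing care is tracking the invertible scalars $\pm p_{ij}$ and the values of the $\sigma_{-e_i}$ on monomials, so as to confirm that no cancellation occurs in either step. A more hands-on alternative would be to write each matrix unit $\xi_{S'}\mapsto\xi_S$ directly as a word in the $\lambda_{\xi_i}$ and $\del_i$; this is possible but messier than appealing to density, and it is reassuring that the argument parallels that of \cite[Section 4.1]{I}.
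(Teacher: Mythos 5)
Your proof is correct. The ingredients are the same as the paper's --- the left multiplications $\lambda_{\xi_i}$ in $D^0_q(R)$ and the skew derivations $\del_i^{-e_i}$ in $D^1_q(R)$, together with the observation that a composite $\prod_{i\in T}\del_i^{-e_i}$ kills $\xi_S$ unless $T\subseteq S$ and otherwise produces a nonzero scalar times $\xi_{S\setminus T}$ --- but the logical route is genuinely different. The paper constructs the matrix units of $\Hom_\K(R,R)$ explicitly: with $\varphi=\del_n^{-e_n}\cdots\del_1^{-e_1}$ (which sends $\xi_1\cdots\xi_n$ to $1$ and kills everything of lower degree), each map $b_1\mapsto b_2$, $b_j\mapsto 0$ is realized as $\alpha\,\lambda_{b_2}\varphi\lambda_{b}$ for the complementary basis monomial $b$; this is exactly the ``hands-on alternative'' you mention at the end, and it is shorter and entirely self-contained. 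Your version instead proves that $R$ is a simple module over the subalgebra $B$ generated by the $\lambda_{\xi_i}$ and $\del_i^{-e_i}$ with commutant $\K$, and invokes Jacobson density. What this buys is conceptual clarity (it is the standard irreducible-Fock-space argument and avoids any bookkeeping of which complementary monomial and which scalar $\alpha$ to use), at the cost of importing the density theorem; your preliminary remark that $\grHom_\K(R,R)=\Hom_\K(R,R)$ for a finite-dimensional $\Z^n$-graded algebra is also a point the paper leaves implicit when it only proves the inclusion $\Hom_\K(R,R)\subseteq D_q(R)$. All the steps check out: the maximality argument on $|T|$ does isolate the constant term, and the linear independence of the monomials $\xi_{S\setminus\{i\}}$ for distinct $S\ni i$ does force $\phi(1)\in\K$.
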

\begin{proof}
Let $\varphi = \del_n^{-e_n}\del_{n-1}^{-e_{n-1}}\cdots \del_1^{-e_1}
\in D^n_q(R)$.  Then $\varphi (\xi_1\xi_2\cdots \xi_n)=1$
and $\varphi (r) =0$ for any $r\in R$ of lower degree.

A basis for $R$ is given by the set
$B= \{ \xi_{i_1}\xi_{i_2}\cdots \xi_{i_n} \}_{1\leq i_1<i_2<\cdots <i_n}
\cup \{ 1 \}$. For any $b_1, b_2\in B$, there exists $b\in B$ and
$\alpha \in \K$ such that $\alpha b b_1 = \xi_1 \xi_2\cdots \xi_n$
and $\alpha b b_j =0$ for $b_j\neq b_1, b_j \in B$.
Hence, $\alpha \lambda_{b_2} \varphi \lambda_{b} (b_1) =b_2$
and $\alpha \lambda_{b_2} \varphi \lambda_{b} (b_j)=0$ for any
$b_j \neq b_1, b_j \in B$. Hence $\Hom _{\K}(R,R) \subset D_q(R)$.
\end{proof}

%---------------------------------------------------------------------------
%---------------------------------------------------------------------------

%---------------------------------------------------------------------------
%---------------------------------------------------------------------------
%--------------------------------------------------------------------
\bibliographystyle{amsplain}

\begin{thebibliography}{[ZZZ]}
\bibitem{A1}
V. A. Artamonov,
\emph{Generalized derivations of the quantum plane}
(Russian) Sovrem. Mat. Prilozh. No. 13, Algebra (2004), 40-52;
translation in J. Math. Sci. (N. Y.) 131 (2005), no. 5, 5904-5918.
\bibitem{A2}
V. A. Artamonov,
\emph{Actions of pointed Hopf algebras on quantum torus}
Ann. Univ. Ferrara Sez. VII (N.S.) 51 (2005), 29-60.
\bibitem{A3}
V. A. Artamonov,
\emph{Quantum polynomials}
Advances in algebra and combinatorics, 19-34, World Sci. Publ., Hackensack, NJ, 2008.
\bibitem{AC}
J. Alev, M. Chamarie,
\emph{Derivations et automorphismes de quelques algebres
quantiques},
Comm. Algebra 20(6) (1992), 1787-1802.
\bibitem{G}
 K. R. Goodearl,
 \emph{Prime ideals in skew polynomial rings and quantized Weyl algebras}, J. Algebra 150 (1992), no. 2, 324--377.
\bibitem{GW}
 K. R. Goodearl and R. B. Warfield,
 An Introduction to Noncommutative  Noetherian Rings, Second Edition, London
 Math. Soc. Student Texts 61, Cambridge, 2004.
\bibitem{I}
  U. N. Iyer,
  \emph{Volichenko algebras as algebras of differential
    operators}, J. of Nonlinear Math. Phys. Vol. 13, No.1 (2006),
  34-49.
\bibitem{IM1}
  U. N. Iyer, T. C. McCune,
  \emph{Quantum differential operators on $\K [x]$},
  Internat. J. of Math., Vol. 13, No.4 (2002) 395-413.
\bibitem{IM2}
  U. N. Iyer, T. C. McCune,
  \emph{Quantum differential operators on the quantum plane},
  J. Algebra 260 (2003), no. 2, 577--591.
\bibitem{J1} D. A. Jordan,
\emph{A simple localization of the quantized Weyl algebra}, J. Algebra 174
(1995), no. 2, 267-281.
\bibitem{J2}
  D. A. Jordan,
  \emph{The graded algebra generated by two Eulerian
    derivatives},
  Algebr. Represent. Theory \textbf{4} (2001), 249-275.
\bibitem{JW}
  D. A. Jordan, I. E. Wells,
  \emph{Simple ambiskew polynomial rings}, J. Algebra, \textbf{382} (2013), no. 2, 46-70.
%\bibitem[KL] G. R. Krause, T. H. Lenagan, Growth of Algebras and Gelfand-Kirillov dimension,
%2nd Edition, Graduate Studies in Mathematics, Volume 22,
%American Mathematical Society, Providence RI 2000.

\bibitem{LR}
  V. Lunts, A. Rosenberg,
 \emph{Differential operators on noncommutative rings},
  Selecta Math.(N.S) \textbf{3} (1997),
  335-359.
\bibitem{malt} G. Maltsiniotis, \emph{Groupes quantique et structures diff\'{e}rentielles}, C.R.Acad. Sci. Paris S\'{e}r I Math. 311 (1990), 831-834.
\bibitem{McCR}
J. C. McConnell and J. C. Robson, with the cooperation of
L. W. Small, {Noncommutative Noetherian Rings},
Wiley, Chichester (1987). Also, Graduate Studies in Mathematics, Volume 30,
American Mathematical Society, Providence RI 2001.
\bibitem{OP}
McConnell J.C., Pettit J.J.,
\emph{Crossed products and multiplicative analogues
of Weyl algebras},
J. London Math. Soc. 38 (1988), 47-55.
\bibitem{R} D. Rogalski,
\emph{Generic noncommutative surfaces},
Adv. Math. 184 (2004), no. 2, 289-341.
%\bibitem[W]{W} I. E. Wells,
%\emph{Generalized Weyl algebras}, PhD
%thesis, University of Sheffield (1995).

\end{thebibliography}

\end{document}